\numberwithin{equation}{section}
\definecolor{mycitecolor}{rgb}{1,0,0}
\definecolor{mylinkcolor}{rgb}{0.66,0,0} 
\definecolor{myurlcolor}{rgb}{0.33,0,0}
\definecolor{labelkey}{rgb}{0,0,1} 
\definecolor{refkey}{rgb}{0,0,0.5} 
\theoremstyle{plain}
\newtheorem{theorem}{Theorem}[section]
\newtheorem{corollary}[theorem]{Corollary}
\newtheorem{lemma}[theorem]{Lemma}
\newtheorem{proposition}[theorem]{Proposition}
\theoremstyle{definition}
\newtheorem{definition}[theorem]{Definition}
\newtheorem{remark}[theorem]{Remark}
\newtheorem*{notation*}{Notation}
\def\vhi{\varphi}
\def\ldiv{\backslash}
\def\rdiv{/}
\def\m{^{-1}}
\def\sym#1{\mathrm{Sym}(#1)}
\def\inn#1{\mathrm{Inn}(#1)}
\def\id#1{\mathrm{id}_{#1}}
\def\lps#1{\mathrm{Psa_{\ell}}(#1)}
\def\atp#1{\mathrm{Atp}(#1)}
\def\aut#1{\mathrm{Aut}(#1)}
\def\ker#1{\mathrm{Ker}(#1)}
\def\img#1{\mathrm{Img}(#1)}
\def\rad#1{\mathrm{Rad}(#1)}
\def\mlt#1{\mathrm{Mlt}(#1)}
\def\inn#1{\mathrm{Inn}(#1)}
\def\asc#1{\mathrm{Asc}(#1)}
\def\com#1{\mathrm{Com}(#1)}
\def\nuc#1{\mathrm{Nuc}(#1)}
\def\csum#1#2{
	\mathchoice
		{\sum_{\mathclap{#1}}^{\mathclap{#2}}}		
		{\sum_{#1}^{#2}}							
		{ERROR}									
		{ERROR}									
}
\def\lcsum#1{\mathchoice{\sum_{\mathclap{#1}}}{\sum_{#1}}{ERROR}{ERROR}}
\def\mysum#1#2#3{\mathchoice{\sum_{\mathclap{#1}}(#2,#3)}{\sum_{#1}(#2,#3)}{ERROR}{ERROR}}
\begin{document}

\title{On abelian-by-cyclic Moufang loops}

\author{Ale\v{s} Dr\'{a}pal}
\address[Dr\'apal]{Department of Mathematics, Charles University, Sokolovsk\'a 83, 18675 Praha 8, Czech Republic}
\email[Dr\'apal]{drapal@karlin.mff.cuni.cz}

\author{Petr Vojt\v echovsk\'y}
\address[Vojt\v{e}chovsk\'y]{Dept.~of Mathematics, University of Denver, 2390 S.~York St., Denver, CO 80208, USA}
\email[Vojt\v{e}chovsk\'y]{petr@math.du.edu}

\thanks{A.~Dr\'apal supported by the INTER-EXCELLENCE project LTAUSA19070 of M\v{S}MT Czech Republic. P.~Vojt\v{e}chovsk\'y supported by the Simons Foundation Mathematics and Physical Sciences Collaboration Grant for Mathematicians no.~855097.}

\keywords{Abelian by cyclic Moufang loop, Moufang loop, conjugation in Moufang loops, Moufang permutation, solvability, congruence solvability}

\subjclass{20N05}

\begin{abstract}
We study abelian-by-cyclic Moufang loops. We construct all split $3$-divisible abelian-by-cyclic Moufang loops from so-called Moufang permutations on abelian groups $(X,+)$, which are permutations that deviate from an automorphism of $(X,+)$ by an alternating biadditive mapping (satisfying certain properties). More generally, we obtain additional abelian-by-cyclic Moufang loops from so-called construction pairs. As an aside, we show that in the Moufang loops $Q$ obtained from a construction pair on $(X,+)$ the abelian normal subgroup $(X,+)$ induces an abelian congruence of $Q$ if and only if $Q$ is a group.
\end{abstract}

\maketitle

\section{Introduction}\label{Sc:Introduction}

Moufang loops were introduced in $1935$ \cite{Moufang} and studied ever since. Yet there remain serious gaps in our understanding of basic structural concepts for Moufang loops. One of the reasons for the state of affairs is the lack of constructions.

In this paper we start a systematic approach to constructions of abelian-by-cyclic Moufang loops, that is, Moufang loops $Q$ possessing an abelian normal subgroup $X$ such that $Q/X$ is cyclic. The main results are Theorem \ref{Th:AbelianByCyclic} (a construction of many abelian-by-cyclic Moufang loops from so-called construction pairs), Theorem \ref{Th:MoufangPair} (every conjugation by $a$ in a Moufang loop restricts to a so-called Moufang permutation on $X$ and the multiplication on the subloop $\langle a^3\rangle X$ is just like the abstract multiplication formula of Theorem \ref{Th:AbelianByCyclic}), Theorem \ref{Th:HomImg} (all $3$-divisible abelian-by-cyclic Moufang loops are homomorphic images of the loops obtained from Theorem \ref{Th:AbelianByCyclic} with construction pairs induced by Moufang permutations), and Theorem \ref{Th:Split} (a characterization of split $3$-divisible abelian-by-cyclic Moufang loops).

\subsection{A brief overview of constructions of Moufang loops}

The motivating and most important examples of Moufang loops come from the multiplicative loops of nonzero octonions \cite{Baez,ConwaySmith,Moufang1933,SpringerVeldkamp}. These octonionic Moufang loops have additional strong structural properties not shared by all Moufang loops. For instance, in the $16$-element loop $\mathbb O_{16}$ of basic units of real octonions, every square associates with all elements (making $\mathbb O_{16}$ an \emph{extra loop} \cite{CheinRobinson,Fenyves}) and there is in fact a unique nonidentity square (making $\mathbb O_{16}$ a \emph{code loop} \cite{CheinGoodaire,Griess}). All nonassociative finite simple Moufang loops are obtained as central factors of the loop of unit elements in split octonion algebras over finite fields \cite{Liebeck,Paige,Zorn}.

Another interesting class of Moufang loops are Moufang $p$-loops. Moufang $p$-loops are centrally nilpotent (see \cite{GlaubermanII} for $p$ odd, \cite{GlaubermanWright} for $p=2$, and \cite{DrapalNilp} for a recent elementary proof of both cases). Using geometric considerations, Bol \cite{Bol} constructed a nonassociative commutative Moufang loop of order $3^4$ (also see \cite{KepkaNemec} for all commutative Moufang loops of order less than $3^6$). Later, Bruck \cite{BruckTrans} constructed a nonassociative Moufang loop of order $p^5$ for every prime $p$, and Nagy and Valsecchi \cite{NagyValsecchi} classified Moufang loops of order $p^5$ for all primes $p>3$. Using a computational approach to central extensions, all Moufang loops of orders $2^6$ and $3^4$ (resp. $3^5$) were classified in \cite{NagyVojtechovsky2007} (resp. \cite{SlatteryZenisek}). The aforementioned code loops are equivalently described as Moufang $2$-loops $Q$ possessing a central subloop $Z$ of order $2$ such that $Q/Z$ is an elementary abelian $2$-group, cf.~\cite{Aschbacher,Griess}. Code loops of order $\le 2^9$ were enumerated in \cite{OBrienVojtechovsky}. The constructions of Moufang $p$-loops are often of combinatorial character and they say little about Moufang loops that are not of prime power order.

Yet another source of constructions is connected with the (still open) question for which integers $n$ there exists a nonassociative Moufang loop of order $n$. Chein described all Moufang loops of order less than $32$ in \cite{CheinTrans} and all Moufang loops of order less than $64$ in \cite{CheinMem} (see also \cite{GoodaireEtal,LOOPS} for a catalog of all nonassociative Moufang loops of order less than $64$). The constructions that appear in Chein's classification of small Moufang loops include the well-known Chein double $M(G,2)$ of a group $G$ as well as several detailed variations on the Chein double. Leong and Rajah \cite{LeongRajah} proved that any Moufang loop of order $p^\alpha q_1^{\alpha_1}\cdots q_k^{\alpha_k}$ with $p < q_1 < \cdots < q_k$ odd primes and with $\alpha\le 3$ and $\alpha_i\le 2$ is a group, and similarly for the case $p>3$ and $\alpha=4$. The intermediate order factorizations between this lower bound (on exponents) and the upper bound furnished by nonassociative Moufang $p$-loops was investigated extensively by Rajah and various coauthors, cf., for instance, \cite{CheeRajah,Rajah2001}. By the very nature of the question, the constructions in this area of research are mostly \emph{ad hoc} since a single nonassociative example is required to settle the existence question for any given order.

As far as general constructions of Moufang loops are concerned, there are substantial results of Kinyon and Kunen on semidirect products in the context of extra loops \cite{KinyonKunen}. For (not necessarily extra) Moufang loops, the first step was a formula discovered by Gagola \cite{Gagola2014}, cf.~our \eqref{Eq:V4}, that gives a necessary condition for the existence of a semidirect product of a Moufang loop (which is normal in the product) and a cyclic group of order coprime to three. Sufficient conditions for the existence of such a semidirect product were then given by Dr\'apal \cite{Drapal2017}. The conditions of \cite{Drapal2017} are in the form of certain requirements on semiautomorphisms of the normal subloop and they are difficult to work with. The present paper was inspired by \cite{Drapal2017} but is independent of it.

Finally, we wish to mention the papers \cite{Gagola2012,GreerRaney,LeongRajah1998} that deal with semidirect products and/or split extensions of Moufang loops.

\subsection{A summary of main results}

The following definition is key to obtaining many abelian-by-cyclic Moufang loops:

\begin{definition}\label{Df:ConstructionPair}
Let $(X,+)$ be an abelian group, $g$ a permutation of $X$ and $\gamma:X\times X\to X$ a mapping. Then $(g,\gamma)$ is a \emph{construction pair} on $(X,+)$ if $\gamma$ is a symmetric alternating biadditive mapping,
\begin{equation}\label{Eq:PairDef}
    g^{-1}(g(x)+g(y)) =x+y + \gamma(x,y)+g^{-1}(\gamma(x,y)) + g^{-2}(\gamma(x,y))\tag{C1}
\end{equation}
holds for all $x,y\in X$,
\begin{equation}\label{Eq:PairRad}
    \gamma(\gamma(x,y),z)=0\tag{C2}
\end{equation}
holds for all $x,y,z\in X$, and
\begin{equation}\label{Eq:PairProp}
    g^{-1}(\gamma(x,y)) = \gamma(g(x),y)\tag{C3}
\end{equation}
holds for all $x,y\in X$.
\end{definition}

Note that the condition \eqref{Eq:PairRad} may be equivalently expressed by stating that the image of $\gamma$ is contained in the radical $\rad{\gamma} = \{x\in X:\gamma(x,y)=0$ for all $y\in X\}$ of $\gamma$.

For $i,j\in\mathbb Z$, define the interval $I(i,j)$ of $\mathbb Z$ by
\begin{equation}\label{Eq:Interval}
    I(i,j) = \left\{\begin{array}{ll}
        \emptyset,&\text{ if }i=j,\\
        \{i,i+1,\dots,j-1\},&\text{ if }i<j,\\
        \{j,j+1,\dots,i-1\},&\text{ if }j<i.
    \end{array}\right.
\end{equation}
Given an abelian group $(X,+)$ and a cyclic group $C=\langle b\rangle$, we show that the formula
\begin{equation}\label{Eq:MultGeneralIntro}
    (b^i,x)\cdot(b^j,y) = \Big(b^{i+j},g^{-j}(x)+y+\lcsum{k\in I(i+j,-j)}g^{-k}(\gamma(x,y))\Big)
\end{equation}
correctly defines a multiplication on $C\times X$ if and only if either $C$ is infinite, or $C$ is finite, $|g|$ divides $|C|$ and $\sum_{0\le k<|C|}g^k(x)\in\rad{\gamma}$ for all $x\in X$. (The condition $\sum_{0\le k<|C|}g^k(x)\in\rad{\gamma}$ is satisfied whenever $|C|$ is finite, $|g|$ divides $|C|$ and $\rad{\gamma}$ contains no elements of order $3$.) In that case the resulting groupoid is in fact a Moufang loop $Q = C\ltimes_{(g,\gamma)}X$ that contains a normal subloop $1\times X$ such that $Q/(1\times X)$ is isomorphic to $C$, cf.~Theorem \ref{Th:AbelianByCyclic}. In addition, $(C\times 0)\cap (1\times X)=1$, so $Q$ is a split extension of $X$ by $C$.

Several key properties of the loops $C\ltimes_{(g,\gamma)}X$ are established in Section \ref{Sc:PropertiesCP}, such as the formula for an elementwise commutator, elementwise associator, the associator subloop, the commutator subloop, the derived subloop, the nucleus and the center.

In the brief Section \ref{Sc:Abelian} we show that in the Moufang loop $Q=C\ltimes_{(g,\gamma)}X$ the normal subloop $1\times X$ induces an abelian congruence of $Q$ if and only if $Q$ is a group. This is related to another current line of investigation of ours concerned with the two notions of solvability in loops, cf.~\cite{DrVo,StaVojComm,StaVojAbel}. The results of Section \ref{Sc:Abelian} are not used elsewhere in the paper.

\medskip

The rest of the paper is concerned with a partial converse of Theorem \ref{Th:AbelianByCyclic}.

It is easy to see that Theorem \ref{Th:AbelianByCyclic} does not yield all abelian-by-cyclic Moufang loops. For instance, the quaternion group $Q_8$ is abelian-by-cyclic but it is not split and hence it cannot be obtained from the construction of Theorem \ref{Th:AbelianByCyclic}. (A smallest nonassociative abelian-by-cyclic Moufang loop that is not split is of order $32$.) In fact, Theorem \ref{Th:AbelianByCyclic} does not yield all split abelian-by-cyclic Moufang loops either. For instance, the nonassociative commutative Moufang loop of order $81$ and exponent $3$ is a split extension of $X=\mathbb Z_3\times\mathbb Z_3\times\mathbb Z_3$ by $C=\mathbb Z_3$, but Theorem \ref{Th:AbelianByCyclic} does not yield any nonassociative commutative Moufang loops, cf.~Corollary \ref{Cr:NoCML}.

On the other hand, Theorem \ref{Th:AbelianByCyclic} yields all abelian-by-cyclic Moufang loops that are split and $3$-divisible, cf.~Theorem \ref{Th:Split}. To prove this fact, we will need the following notion similar to construction pairs:

\begin{definition}\label{Df:MoufangPerm}
Let $(X,+)$ be an abelian group. A permutation $f$ on $X$ is said to be a \emph{Moufang permutation} on $(X,+)$ if the mapping $\beta:X\times X\to X$ defined by
\begin{equation}\label{Eq:BetaDef}
    \beta(x,y) = f^{-1}(f(x)+f(y))-x-y\tag{P1}
\end{equation}
is (symmetric) alternating and biadditive,
\begin{equation}\label{Eq:BetaRad}
    \beta(\beta(x,y),z)=0\tag{P2}
\end{equation}
holds for all $x,y,z\in X$, and
\begin{equation}\label{Eq:BetaProp}
    \beta(f(x),f(y)) = f(\beta(f^3(x),y))\tag{P3}
\end{equation}
holds for all $x,y\in X$. When $f$ is a Moufang permutation and $\beta$ is defined by \eqref{Eq:BetaDef}, we call $\beta$ the \emph{biadditive mapping associated with}$ f$ and the tuple $(f,\beta)$ a \emph{Moufang pair}.
\end{definition}

The relation between construction pairs and Moufang pairs is that if $(f,\beta)$ is a Moufang pair then $(f^3,\beta)$ is a construction pair, cf.~Proposition \ref{Pr:MPCP}. In particular, when the remaining assumptions of Theorem \ref{Th:AbelianByCyclic} are satisfied, we can use the multiplication formula \eqref{Eq:MultGeneralIntro} with $(g,\gamma) = (f^3,\beta)$. If also $b=a^3$ for some $a\in C$ (which certainly holds when $C$ is $3$-divisible), the formula \eqref{Eq:MultGeneralIntro} becomes
\begin{equation}\label{Eq:MultMP}
    (a^{3i},x)\cdot(a^{3j},y) = \Big(a^{3(i+j)},f^{-3j}(x)+y+\lcsum{k\in I(i+j,-j)}f^{-3k}(\beta(x,y))\Big).
\end{equation}

After deriving some rather general results on pseudoautomorphisms and semiautomorphisms of Moufang loops induced by inner mappings in Sections \ref{Sc:Pseudo} and \ref{Sc:Semi}, we prove that for every abelian normal subgroup $X$ of a Moufang loop $Q$ and every element $a\in Q$ the restriction of the ``conjugation'' $T_a$ to $X$ is a Moufang permutation on $X$, cf.~Proposition \ref{Pr:Beta}.

In Section \ref{Sc:MPP} we study abstract properties of Moufang permutations $f$ and their powers. Among other results, we derive a formula for the expression $f^{-i}(f^i(x)+f^i(y))$, cf.~Proposition \ref{Pr:iBetaDef}.

We then prove in Theorem \ref{Th:MoufangPair} that if $(X,+)$ is an abelian normal subgroup of a Moufang loop $Q$ and if $a\in Q$ then
\begin{equation}\label{Eq:MultInner}
	a^{3i}x\cdot a^{3j}y = a^{3(i+j)}\Big(f^{-3j}(x)+y+\lcsum{k\in I(i+j,-j)}f^{-3k}(\beta(x,y))\Big)
\end{equation}
holds for all $i,j\in\mathbb Z$ and $x,y\in X$, where $f$ is the restriction of $T_a$ to $X$, which we know from Proposition \ref{Pr:Beta} is a Moufang permutation on $(X,+)$. Here and throughout the paper, if $(X,+)$ is a normal subloop of $(Q,\cdot)$, we write either $x+y$ or $x\cdot y$ for the product of $x,y\in X$ in $Q$.

Note that \eqref{Eq:MultInner} does not necessarily describe the multiplication for \emph{all} pairs of elements of $Q$. Nevertheless, it easily follows, cf.~Corollary \ref{Cr:MultMP}, that if $(X,+)\unlhd Q$ and $Q=\langle a^3\rangle X$ for some $a\in Q$, then \eqref{Eq:MultInner} does fully describe the multiplication of $Q$.

It is then not difficult to show that all $3$-divisible abelian-by-cyclic Moufang loops $Q=CX$ are homomorphic images of the loops $C\ltimes_{(f^3,\beta)}X$, where $(f,\beta)$ is a Moufang pair on $(X,+)$. Here, $|f^3|$ must divide $|C|$, else $C\ltimes_{(f^3,\beta)}X$ is not even defined. The kernels of the homomorphisms are described in Proposition \ref{Pr:Kernel}. Finally, $3$-divisible abelian-by-cyclic Moufang loops $Q=CX$ that are split are precisely the the loops $C\ltimes_{(f^3,\beta)}X$ with $(f,\beta)$ a Moufang pair on $(X,+)$, cf.~Theorem \ref{Th:Split}.

\medskip

Let us conclude the summary of main results with a few comments.

The multiplication formula \eqref{Eq:MultInner} might appear rather complicated and unnatural. A point of departure in deriving the formula is the important identity
\begin{displaymath}
	a^{3i}x\cdot ya^{3j} = a^{3(i+j)}T_a^{-i-2j}(T_a^{i-j}(x)T_a^{i-j}(y))
\end{displaymath}
valid in all Moufang loops. We record this identity and all its variants in Proposition \ref{Pr:TVariations}, including the already mentioned identity \eqref{Eq:V4} of Gagola \cite{Gagola2014}.

It turns out that in the finite case each construction pair $(g,\gamma)$ on $(X,+)$ induces an integer $m$ and a mapping $U\times U \to V$ that is alternating and bilinear over $R$, where $U = X/\rad{\gamma}$, $V = \langle \img{\gamma}\rangle$  and $R = \mathbb F_2[x]/(x^m-1)$. To describe construction pairs for a given abelian group $(X,+)$ thus means to start from a classification of the relevant alternating bilinear mappings and then include several steps of lifting. We intend to study the problem of obtaining construction pairs and of classifying the resulting Moufang loops up to isomorphism in a future paper.

\section{Background}\label{Sc:Background}

\subsection{Mappings}\label{Ss:Mappings}

Let $(X,+)$ be an abelian group. Let us call a mapping $\gamma:X\times X\to X$ \emph{symmetric} if $\gamma(x,y)=\gamma(y,x)$ for all $x,y\in X$, \emph{antisymmetric} if $\gamma(x,y)=-\gamma(y,x)$ for all $x,y\in X$, \emph{alternating} if $\gamma(x,x)=0$ for all $x\in X$, and \emph{biadditive} if $\gamma(x+y,z) = \gamma(x,z)+\gamma(y,z)$ and $\gamma(x,y+z)=\gamma(x,y)+\gamma(x,z)$ for all $x,y,z\in X$.

Note that a biadditive mapping satisfies $\gamma(0,x)=0=\gamma(x,0)$, $\gamma(-x,y)=-\gamma(x,y)=\gamma(x,-y)$, $\gamma(x-y,z) = \gamma(x,z)-\gamma(y,z)$ and $\gamma(x,y-z)=\gamma(x,y)-\gamma(x,z)$ for all $x,y,z\in X$. Indeed, $\gamma(0,x) = \gamma(0+0,x) = 2\gamma(0,x)$ yields $\gamma(0,x)=0$, then $0 = \gamma(x+(-x),y) = \gamma(x,y)+\gamma(-x,y)$ implies $\gamma(-x,y)=-\gamma(x,y)$, and finally $\gamma(x-y,z) = \gamma(x,z)+\gamma(-y,z) = \gamma(x,z)-\gamma(y,z)$

An alternating biadditive mapping is antisymmetric since $0=\gamma(x+y,x+y)=\gamma(x,x)+\gamma(x,y)+\gamma(y,x)+\gamma(y,y) = \gamma(x,y)+\gamma(y,x)$. A symmetric alternating biadditive mapping then satisfies $2\gamma(x,y)=\gamma(2x,y)=0$ because $\gamma(x,y)=\gamma(y,x)=-\gamma(x,y)$. All these properties will be used without reference throughout the paper.

The image of any mapping $g$ will be denoted by $\img{g}$. As in the introduction, for a symmetric mapping $\gamma:X\times X\to X$, the \emph{radical} of $\gamma$ is defined by
\begin{displaymath}
	\rad{\gamma} = \{x\in X:\gamma(x,y)=0\text{ for all }y\in X\}.
\end{displaymath}
If $\gamma$ is symmetric and biadditive, $\rad{\gamma}$ is a subgroup of $(X,+)$.

\begin{notation*}
In order to improve legibility, we will often omit parentheses around arguments of unary mappings. For instance, if $g:X\to X$ and $\gamma:X\times X\to X$, we might write $g\gamma(x,y)$ instead of $g(\gamma(x,y))$, and $\gamma(gx,y)$ instead of $\gamma(g(x),y)$.
\end{notation*}

\subsection{Loops}

See \cite{BruckTrans, Pflugfelder} for an introduction to loop theory.

A set $Q$ with a binary operation $\cdot$ and an element $1\in Q$ is a \emph{loop} if for every $x\in Q$ we have $x\cdot 1=1\cdot x = x$ and the translations
\begin{displaymath}
	L_x:Q\to Q,\,L_x(y)=x\cdot y\quad\text{and}\quad R_x:Q\to Q,\,R_x(y)=y\cdot x
\end{displaymath}
are permutations of $Q$. The induced division operations will be denoted by $x\ldiv y = L_x^{-1}(y)$ and $x\rdiv y = R_y^{-1}(x)$.

Associative subloops will be referred to as \emph{subgroups}. A loop $Q$ is \emph{power associative} if every element of $Q$ generates a subgroup. A loop $Q$ is \emph{diassociative} if every two elements of $Q$ generate a subgroup.

\begin{notation*}
We will often write $xy$ instead of $x\cdot y$ and use $\cdot$ to indicate priority of multiplication, e.g., $x\cdot yz$ stands for $x\cdot(y\cdot z)$.
\end{notation*}

The \emph{multiplication group} of a loop $Q$ is the permutation group $\mlt{Q}=\langle L_x,R_x:x\in Q\rangle$, and the \emph{inner mapping group} $\inn{Q}$ of $Q$ is the stabilizer of $1$ in $\mlt{Q}$. It is well known that $\inn{Q}=\langle T_x,L_{x,y},R_{x,y}:x,y\in Q\rangle$, where
\begin{displaymath}
	T_x = R_x^{-1}L_x,\quad L_{x,y} = L_{xy}^{-1}L_xL_y\quad\text{and}\quad R_{x,y} = R_{xy}^{-1}R_yR_x.
\end{displaymath}
We refer to the inner mappings $T_x$ as \emph{conjugations}.

The \emph{nucleus} $\nuc{Q}$ of $Q$ consists of all $x\in Q$ such that $x(yz)=(xy)z$, $y(xz)=(yx)z$ and $y(zx) = (yz)x$ for all $y,z\in Q$. The \emph{center} $Z(Q)$ of $Q$ consists of all $x\in\nuc{Q}$ such that $xy=yx$ for all $y\in Q$. The \emph{commutator} $[x,y]$ of $x,y\in Q$ is defined by $(yx)[x,y] = xy$. The \emph{associator} $[x,y,z]$ of $x,y,z\in Q$ is defined by $(x\cdot yz)[x,y,z] = xy\cdot z$.

\subsection{Morphisms and topisms}

Denote by $\sym{Q}$ the symmetric group on $Q$. A tuple $(c,f)\in Q\times\sym{Q}$ is a \emph{(left) pseudoautomorphism} of $Q$ if
\begin{equation}\label{Eq:DfPseudo}
    cf(x)\cdot f(y) = cf(xy)
\end{equation}
holds for every $x,y\in Q$. The element $c$ is called a \emph{companion} of $f$. The set of all pseudoautomorphisms of $Q$ forms a group $\lps{Q}$ under the operations
\begin{equation}\label{Eq:LPs}
    (c,f)(d,g) = (cf(d),fc)\quad\text{and}\quad (c,f)^{-1} = (f^{-1}(c\ldiv 1),f^{-1}).
\end{equation}

A permutation $f\in\sym{Q}$ is a \emph{semiautomorphism} of $Q$ if $f(1)=1$ and
\begin{equation}\label{Eq:Semi}
    f(x\cdot yx)= f(x)\cdot f(y)f(x)
\end{equation}
holds for all $x,y\in Q$. If $f$ is a semiautomorphism of a power associative loop $Q$ then $f(x^i) = f(x)^i$ for every $i\in\mathbb Z$.

The condition \eqref{Eq:Semi} can be written as $fL_xR_x = L_{f(x)}R_{f(x)}f$ and therefore also as
\begin{equation}\label{Eq:SemiEquiv}
    L_{f(x)}\m f L_x = R_{f(x)} f R_x\m.
\end{equation}
Thus any permutation $f$ of a loop $Q$ that satisfies $f(1)=1$ and \eqref{Eq:SemiEquiv} is a semiautomorphism.

An \emph{autotopism} of a loop $Q$ is a triple $(\alpha,\beta,\gamma)$ of permutations of $Q$ such that
\begin{displaymath}
    \alpha(x)\beta(y) = \gamma(xy)
\end{displaymath}
holds for all $x,y\in Q$. Autotopisms of $Q$ may be composed and inverted componentwise, forming the \emph{autotopism group} $\atp{Q}$.

Note that $(c,f)$ is a pseudoautomorphism of $Q$ if and only if $(L_c f, f,L_c f)$ is an autotopism of $Q$. We use this fact in the proof of the following observation about autotopisms in which the middle component fixes the identity element.

\begin{lemma}\label{Lm:AtpToPseudo}
Let $(Q,\cdot,1)$ be a loop. Suppose that $(\alpha,\beta,\gamma)\in \atp{Q}$ and $\beta(1) = 1$. Then $(\alpha(1),\beta)\in \lps{Q}$.
\end{lemma}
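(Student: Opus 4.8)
The plan is to start from the autotopism $(\alpha,\beta,\gamma)\in\atp{Q}$ with the hypothesis $\beta(1)=1$, and extract the data needed for a pseudoautomorphism. Recall from the remark just before the statement that $(c,f)\in\lps{Q}$ exactly when $(L_cf,\,f,\,L_cf)\in\atp{Q}$; so my target is to show that the given autotopism can be massaged into this diagonal shape with first-and-third components equal and with middle component $\beta$. First I would evaluate the defining relation $\alpha(x)\beta(y)=\gamma(xy)$ at the two special arguments that kill one variable. Setting $y=1$ gives $\alpha(x)\cdot\beta(1)=\gamma(x)$, and since $\beta(1)=1$ this reads $\alpha(x)=\gamma(x)$ for all $x$, so $\alpha=\gamma$. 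Setting $x=1$ gives $\alpha(1)\cdot\beta(y)=\gamma(y)=\alpha(y)$, i.e.\ $\alpha(y)=\alpha(1)\cdot\beta(y)=L_{\alpha(1)}\beta(y)$, so $\alpha=L_{\alpha(1)}\beta$ as permutations.

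With $c:=\alpha(1)$ these two observations combine to identify the autotopism explicitly: its first and third components both equal $\alpha=L_c\beta$, and its middle component is $\beta$. Thus $(\alpha,\beta,\gamma)=(L_c\beta,\,\beta,\,L_c\beta)$, which is precisely the autotopism the remark associates with the candidate pseudoautomorphism $(c,\beta)=(\alpha(1),\beta)$. Invoking the stated equivalence in the reverse direction, membership of this diagonal triple in $\atp{Q}$ is equivalent to $(\alpha(1),\beta)\in\lps{Q}$, which is the desired conclusion. Concretely, unwinding the definition, I would note that the autotopism equation $\alpha(x)\beta(y)=\alpha(xy)$, rewritten via $\alpha=L_c\beta$, becomes $c\beta(x)\cdot\beta(y)=c\beta(xy)$, which is exactly the pseudoautomorphism condition \eqref{Eq:DfPseudo} for the pair $(c,\beta)$.

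I do not anticipate a genuine obstacle here: the argument is a short two-substitution computation, and the only point requiring a little care is that the two identities $\alpha=\gamma$ and $\alpha=L_{\alpha(1)}\beta$ are consistent and together force the triple into diagonal form, rather than merely constraining it. One should also verify that the appeal to the preceding remark is legitimate as a biconditional: the remark asserts that $(c,f)\in\lps{Q}$ \emph{if and only if} $(L_cf,f,L_cf)\in\atp{Q}$, and I am using the ``if'' direction, so $\beta\in\sym{Q}$ (guaranteed since it is a component of an autotopism) suffices to conclude $(\alpha(1),\beta)$ is a bona fide pseudoautomorphism. Since nothing beyond the two evaluations and the quoted equivalence is needed, the proof is essentially immediate once $\alpha=\gamma$ and $\alpha=L_{\alpha(1)}\beta$ are recorded.
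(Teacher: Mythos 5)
Your proposal is correct and follows exactly the paper's argument: the two substitutions $y=1$ and $x=1$ give $\alpha=\gamma$ and $\alpha=L_{\alpha(1)}\beta$, so the triple is the diagonal autotopism $(L_c\beta,\beta,L_c\beta)$ associated with the pseudoautomorphism $(\alpha(1),\beta)$. No gaps; nothing to add.
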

\begin{proof}
Let $c=\alpha(1)$. By our assumption, $\alpha(x) = \alpha(x)\beta(1) = \gamma(x\cdot 1) = \gamma(x)$ and $c\beta(x) = \alpha(1)\beta(x) = \gamma(1\cdot x) = \gamma(x)$ for every $x\in Q$. Hence $(\alpha,\beta,\gamma) = (L_c\beta,\beta,L_c\beta)$.
\end{proof}

\subsection{Moufang loops}

A loop $Q$ is \emph{Moufang} if it satisfies any one of the equivalent \emph{Moufang identities}
\begin{align}
    xy\cdot zx &= (x\cdot yz)x,\label{Eq:M1}\tag{M1}\\
    xy\cdot zx &= x(yz\cdot x),\label{Eq:M2}\tag{M2}\\
    x(y\cdot zy) &= (xy\cdot z)y,\label{Eq:M3}\tag{M3}\\
    x(y\cdot xz) &= (xy\cdot x)z.\label{Eq:M4}\tag{M4}
\end{align}

Let us summarize a few facts about Moufang loops, consisting of easy observations and standard results on Moufang loops \cite{BruckTrans}.

The identity \eqref{Eq:M1} can be stated as $(L_x,R_x,R_xL_x)\in\atp{Q}$. This is a useful characterization of Moufang loops in terms of autotopisms.

By Moufang Theorem \cite{DrapalMouf, Moufang}, if three elements $x$, $y$ and $z$ of a Moufang loop associate, that is, $x(yz)=(xy)z$, then the subloop $\langle x,y,z\rangle$ is a group. Consequently, Moufang loops are diassociative, power associative, satisfy the \emph{flexible law} $x(yx)=(xy)x$, the \emph{inverse properties} $x^{-1}(xy)=y=(yx)x^{-1}$, and so on. We take advantage of diassociativity in Moufang loops and write unambiguously $xyx$, $x^y = y^{-1}xy$, $[x,y]=x^{-1}y^{-1}xy$, etc.

\begin{lemma}\cite{DrVo}
Let $Q$ be a Moufang loop. Then
\begin{align}
	x^{-1}(xy\cdot z) &= yx^{-1}\cdot xz,\label{Eq:M5}\\
	 (z\cdot yx)x^{-1} &= zx\cdot x^{-1}y\label{Eq:M6}
\end{align}
for every $x,y,z\in Q$.
\end{lemma}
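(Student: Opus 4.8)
The plan is to obtain each of \eqref{Eq:M5} and \eqref{Eq:M6} as a direct specialization of one of the Moufang identities, using only the inverse properties to collapse the substituted arguments. Both identities are trivially valid in groups (each side equals $yz$, respectively $zy$), so one expects them to follow from a single Moufang identity after inserting $x^{-1}$ in the right place; the only real decision is which substitution makes the cancellations fire.

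For \eqref{Eq:M5} I would start from the left Moufang identity \eqref{Eq:M4}, written with fresh symbols as $u(v\cdot uw)=(uv\cdot u)w$, and substitute $u=x^{-1}$, $v=xy$, $w=xz$. The left inverse property then gives $uw=x^{-1}(xz)=z$ and $uv=x^{-1}(xy)=y$, so the left-hand side becomes $x^{-1}(xy\cdot z)$ while the right-hand side becomes $(yx^{-1})(xz)=yx^{-1}\cdot xz$, which is exactly \eqref{Eq:M5}. The point to check is that the two cancellations occur in two different brackets simultaneously: one inside the inner product $v\cdot uw$ and one inside the outer factor $uv\cdot u$.

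For \eqref{Eq:M6} I would argue symmetrically from the right Moufang identity \eqref{Eq:M3}, written as $u(v\cdot wv)=(uv\cdot w)v$, substituting $u=zx$, $v=x^{-1}$, $w=yx$ and using the right inverse property $wv=yx\cdot x^{-1}=y$ and $uv=zx\cdot x^{-1}=z$; this turns \eqref{Eq:M3} into $(z\cdot yx)x^{-1}=zx\cdot x^{-1}y$. Alternatively, \eqref{Eq:M6} can be deduced from \eqref{Eq:M5} by applying the antiautomorphic inverse property $(pq)^{-1}=q^{-1}p^{-1}$ of Moufang loops to both sides of \eqref{Eq:M5} and replacing $x,y,z$ by $x^{-1},y^{-1},z^{-1}$.

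There is no genuine obstacle here: once the substitutions are chosen the verifications are one line each, and the inverse-property cancellations are legitimate because $x$ together with any single further element generates a group in a Moufang loop. The only care needed is bookkeeping of the brackets, since the asymmetry of the target identities must come out of the asymmetric bracketing of \eqref{Eq:M4} and \eqref{Eq:M3}.
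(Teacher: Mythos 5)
Your proof is correct. Note that the paper itself gives no proof of this lemma, citing \cite{DrVo} instead, so there is no internal argument to compare against; your derivation supplies a clean self-contained justification. Both substitutions check out: specializing \eqref{Eq:M4} at $(x^{-1},xy,xz)$ and collapsing via the left inverse property yields \eqref{Eq:M5} exactly, and specializing \eqref{Eq:M3} at $(zx,x^{-1},yx)$ with the right inverse property yields \eqref{Eq:M6} (with the two sides swapped, which is harmless). The alternative route to \eqref{Eq:M6} via the antiautomorphic inverse property $(pq)^{-1}=q^{-1}p^{-1}$ applied to \eqref{Eq:M5} also works. One minor point: the inverse properties $x^{-1}(xy)=y=(yx)x^{-1}$ hold unconditionally in Moufang loops (they are IP loops, as the paper records), so you do not even need to invoke diassociativity to license the cancellations, though that justification is also valid since each cancellation involves only $x$ and one other element.
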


All inner mappings of a Moufang loop can be seen as pseudoautomorphisms, with suitable companions. In particular,
\begin{equation}\label{Eq:InnPseudo}
    (x^{-3},T_x),\quad ([y,x],R_{x,y})\quad\text{and}\quad ([x^{-1},y^{-1}],L_{x,y})
\end{equation}
are pseudoautomorphisms in a Moufang loop. Every pseudoautomorphism of a Moufang loop is a semiautomorphism.

\begin{lemma}\label{Lm:MoufPseudo}
Let $Q$ be a Moufang loop, $c\in Q$ and $f\in\sym{Q}$. Then $(c,f) \in \lps{Q}$ if and only if
\begin{displaymath}
    xc^{-1}\cdot cy = f(f^{-1}(x)f^{-1}(y))
\end{displaymath}
for all $x,y \in Q$.
\end{lemma}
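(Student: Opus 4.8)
The plan is to prove the equivalence by directly translating between the defining identity \eqref{Eq:DfPseudo} of a pseudoautomorphism and the asserted identity, using the substitution $u=f^{-1}(x)$, $v=f^{-1}(y)$ (legitimate since $f\in\sym{Q}$) together with the Moufang identity \eqref{Eq:M5} as the crucial bridge. The observation that makes everything work is that \eqref{Eq:M5}, read with $(x,y,z)\mapsto(c,x,y)$, says precisely
\[
	c^{-1}(cx\cdot y)=xc^{-1}\cdot cy,
\]
so the two sides of the claimed identity are just two ways of writing the same element.

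First I would prove the forward implication. Assuming $(c,f)\in\lps{Q}$, I substitute $u=f^{-1}(x)$ and $v=f^{-1}(y)$ into \eqref{Eq:DfPseudo}. Since $f(u)=x$ and $f(v)=y$, this yields $cx\cdot y=c\cdot f(f^{-1}(x)f^{-1}(y))$. Applying the left inverse property (available in every Moufang loop) isolates $f(f^{-1}(x)f^{-1}(y))=c^{-1}(cx\cdot y)$, and rewriting the right-hand side by the displayed instance of \eqref{Eq:M5} gives $f(f^{-1}(x)f^{-1}(y))=xc^{-1}\cdot cy$, as required.

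For the converse I would reverse these steps. Starting from $xc^{-1}\cdot cy=f(f^{-1}(x)f^{-1}(y))$, I use \eqref{Eq:M5} to replace the left-hand side by $c^{-1}(cx\cdot y)$, multiply on the left by $c$, and cancel via the left inverse property to obtain $c\cdot f(f^{-1}(x)f^{-1}(y))=cx\cdot y$. Finally I substitute $x=f(u)$, $y=f(v)$, which turns this into $cf(u)\cdot f(v)=c\cdot f(uv)$; as $u,v$ range over $Q$ this is exactly \eqref{Eq:DfPseudo}, so $(c,f)\in\lps{Q}$.

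I do not anticipate a genuine obstacle here: the content of the lemma is essentially a reformulation, and the only step requiring insight is recognizing that \eqref{Eq:M5} is the precise identity needed to convert $c^{-1}(cx\cdot y)$ into $xc^{-1}\cdot cy$. Once \eqref{Eq:M5} is in hand (and it is supplied by the preceding lemma), both directions are routine substitutions combined with the inverse property, and no Moufang identity beyond \eqref{Eq:M5} is needed.
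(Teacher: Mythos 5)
Your proof is correct and follows essentially the same route as the paper's: substitute $f^{-1}(x)$, $f^{-1}(y)$ into the defining identity \eqref{Eq:DfPseudo}, multiply by $c^{-1}$ on the left, and apply \eqref{Eq:M5} with $(x,y,z)\mapsto(c,x,y)$ to identify $c^{-1}(cx\cdot y)$ with $xc^{-1}\cdot cy$. The paper treats the two directions in one stroke since every step is reversible, whereas you spell out the converse explicitly; this is only a difference in presentation.
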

\begin{proof}
Substituting $f^{-1}(x)$ for $x$ and $f^{-1}(y)$ for $y$ into $cf(x)\cdot f(y) = cf(xy)$ yields $cx\cdot y = cf(f^{-1}(x)f^{-1}(y))$. We are done upon multiplying by $c^{-1}$ on the left and applying \eqref{Eq:M5}.
\end{proof}

\begin{corollary}\label{Cr:T}
Let $Q$ be a Moufang loop. Then
\begin{displaymath}
    xa^{-3} \cdot a^3y = T_a^{-1}(T_a(x)T_a(y))
\end{displaymath}
for all $a,x,y \in Q$.
\end{corollary}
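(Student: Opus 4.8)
The plan is to read off the statement as an immediate application of Lemma~\ref{Lm:MoufPseudo} to a single well-chosen pseudoautomorphism. Lemma~\ref{Lm:MoufPseudo} asserts that $xc^{-1}\cdot cy = f(f^{-1}(x)f^{-1}(y))$ precisely when $(c,f)\in\lps{Q}$, and the claimed identity has exactly this form with $c=a^3$ and $f=T_a^{-1}$. Indeed, since $f^{-1}=T_a$, the right-hand side $f(f^{-1}(x)f^{-1}(y))$ becomes $T_a^{-1}(T_a(x)T_a(y))$ and the left-hand side $x(a^3)^{-1}\cdot a^3y$ becomes $xa^{-3}\cdot a^3y$. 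Thus the entire content of the corollary reduces to verifying that $(a^3,T_a^{-1})$ is a pseudoautomorphism of $Q$.

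To obtain this pseudoautomorphism I would start from \eqref{Eq:InnPseudo}, which already records that $(a^{-3},T_a)\in\lps{Q}$ for every $a\in Q$, and then pass to its inverse in the group $\lps{Q}$. Applying the inversion formula of \eqref{Eq:LPs} gives $(a^{-3},T_a)^{-1}=(T_a^{-1}(a^{-3}\ldiv 1),\,T_a^{-1})$, so it only remains to simplify the companion. By power associativity $a^{-3}\ldiv 1=a^3$, and since conjugation by $a$ fixes every power of $a$ we have $T_a(a^3)=a^3$, hence $T_a^{-1}(a^3)=a^3$. Therefore $(a^{-3},T_a)^{-1}=(a^3,T_a^{-1})$, and this lies in $\lps{Q}$ because $\lps{Q}$ is a group.

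With $(a^3,T_a^{-1})\in\lps{Q}$ in hand, I would substitute $c=a^3$ and $f=T_a^{-1}$ directly into Lemma~\ref{Lm:MoufPseudo} and read off $xa^{-3}\cdot a^3y=T_a^{-1}(T_a(x)T_a(y))$, which is exactly the assertion.

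There is no genuinely hard step here; the whole proof is a matter of selecting the correct pseudoautomorphism and tracking inverses carefully. The only points deserving a word of justification are the two companion simplifications---that $a^{-3}\ldiv 1=a^3$ and that $T_a$ fixes $a^3$---both of which follow at once from power associativity and the fact that a conjugation acts trivially on the cyclic subgroup generated by the conjugating element. An equally clean alternative, avoiding the inversion formula entirely, would be to replace $a$ by $a^{-1}$ in \eqref{Eq:InnPseudo} and use the identity $T_{a^{-1}}=T_a^{-1}$ to land on the same pseudoautomorphism $(a^3,T_a^{-1})$.
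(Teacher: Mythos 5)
Your proof is correct and follows the route the paper intends: Corollary \ref{Cr:T} is meant to be read off from Lemma \ref{Lm:MoufPseudo} applied to the conjugation pseudoautomorphism, and your identification of $(a^3,T_a^{-1})\in\lps{Q}$ (via inversion in $\lps{Q}$, or equivalently via the substitution $a\mapsto a^{-1}$ in \eqref{Eq:InnPseudo} together with $T_{a^{-1}}=T_a^{-1}$) is exactly the right pseudoautomorphism, with the companion simplifications correctly justified.
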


The equation \eqref{Eq:TKey} below is of crucial importance for this paper.

\begin{proposition}\label{Pr:TKey}
Let $Q$ be a Moufang loop. Then
\begin{equation}\label{Eq:TKey}
    a^{3i} x \cdot ya^{3j} = a^{3i} \cdot  T_a^{j-i}(T_a^{i-j}(x) T_a^{i-j}(y)) \cdot a^{3j}
\end{equation}
for all $a,x,y\in Q$ and all $i,j\in\mathbb Z$.
\end{proposition}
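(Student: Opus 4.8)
The plan is to deduce \eqref{Eq:TKey} from a two-sided generalization of Corollary~\ref{Cr:T}, obtained by passing to arbitrary powers of the pseudoautomorphism $(a^{-3},T_a)$, and then to propagate the identity from the ``diagonal'' case $j=0$ to all exponents by a balanced shift coming from the Moufang autotopism \eqref{Eq:M1}. For brevity, denote by $P(i,j)$ the instance of \eqref{Eq:TKey} for a fixed pair $i,j\in\mathbb Z$.

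First I would record that $(a^{-3n},T_a^n)\in\lps{Q}$ for every $n\in\mathbb Z$. Indeed $(a^{-3},T_a)\in\lps{Q}$ by \eqref{Eq:InnPseudo}, and since $T_a$ is a semiautomorphism fixing every power of $a$ (so $T_a(a^{-3})=a^{-3}$), the group structure of $\lps{Q}$ recorded in \eqref{Eq:LPs} gives $(a^{-3},T_a)^n=(a^{-3n},T_a^n)$. Feeding $(c,f)=(a^{-3n},T_a^n)$ into Lemma~\ref{Lm:MoufPseudo} then yields the key identity
\[
    x a^{3n}\cdot a^{-3n}y = T_a^n\!\left(T_a^{-n}(x)\,T_a^{-n}(y)\right)
\]
for all $n\in\mathbb Z$, of which Corollary~\ref{Cr:T} is the case $n=-1$. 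Next I would prove $P(k,0)$, that is, $a^{3k}x\cdot y = a^{3k}T_a^{-k}(T_a^k(x)T_a^k(y))$: using the displayed identity with $n=-k$, its right-hand side equals $a^{3k}\bigl(xa^{-3k}\cdot a^{3k}y\bigr)$, so it suffices to verify $b(xb^{-1}\cdot by)=bx\cdot y$ with $b=a^{3k}$. This is a one-line consequence of \eqref{Eq:M4} applied to the triple $(b,\,xb^{-1},\,y)$, together with diassociativity, which collapses $b\cdot xb^{-1}\cdot b$ to $bx$ inside the group $\langle a,x\rangle$.

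Finally I would pass from $P(i-j,0)$ to $P(i,j)$ via the balanced shift $P(i,j)\Leftrightarrow P(i+1,j+1)$. Writing $a^{3(i+1)}x=a^3(a^{3i}x)$ and $ya^{3(j+1)}=(ya^{3j})a^3$ by diassociativity, and applying the autotopism $(L_{a^3},R_{a^3},R_{a^3}L_{a^3})$ of \eqref{Eq:M1} in the form $a^3u\cdot va^3=a^3(uv)a^3$, one sees that both sides of $P(i+1,j+1)$ are obtained by conjugating the corresponding sides of $P(i,j)$ by $a^3$; since the inner factor $W=T_a^{j-i}(T_a^{i-j}(x)T_a^{i-j}(y))$ depends only on $i-j$ and the expression $a^{3(i+1)}Wa^{3(j+1)}$ is unambiguous (it lies in the group $\langle a,W\rangle$), this reproduces exactly $P(i+1,j+1)$. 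As conjugation by $a^3$ is a bijection, the shift is an equivalence, and iterating it $|j|$ times reduces the general case to the already established case $j=0$.

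The computations are routine; the only things demanding care are the bookkeeping of the exponents of $T_a$ against those of $a$ (the factor $3$ in $a^{-3n}$ versus the single power in $T_a^n$ is precisely the Moufang phenomenon encoded by \eqref{Eq:InnPseudo}) and the repeated, tacit use of diassociativity to reposition powers of $a$ and to render bracketings such as $a^{3i}Wa^{3j}$ unambiguous. I do not expect a genuine obstacle beyond this bookkeeping: the substantive content is carried entirely by the generalized identity above and by the short \eqref{Eq:M4} computation behind $P(k,0)$.
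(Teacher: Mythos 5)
Your proof is correct and follows essentially the same route as the paper's: both reduce \eqref{Eq:TKey} to the diagonal case $j=0$ by extracting the common power of $a^3$ via the Moufang identity \eqref{Eq:M1}, and both handle that case with the pseudoautomorphism identity of Lemma \ref{Lm:MoufPseudo}. The only substantive differences are cosmetic — you obtain the power version $xa^{3n}\cdot a^{-3n}y=T_a^n(T_a^{-n}(x)T_a^{-n}(y))$ from the group structure of $\lps{Q}$, whereas the paper substitutes $a^\delta$ for $a$ in Corollary \ref{Cr:T} and invokes $T_{a^k}=T_a^k$; and you iterate a unit shift where the paper pulls out $a^{3j}$ in a single application of \eqref{Eq:M1}.
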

\begin{proof}
Let $b=a^{3i}$ and $\delta=i-j$. By diassociativity and \eqref{Eq:M1}, $b^ix\cdot yb^j = b^j(b^\delta x)\cdot yb^j = b^j(b^\delta x\cdot y)b^j$. By \eqref{Eq:M5} and Corollary \ref{Cr:T},
\begin{displaymath}
    b^\delta x\cdot y = b^{\delta}(xb^{-\delta}\cdot b^\delta y) = b^\delta T_{a^{-\delta}}(T_{a^\delta}(x)T_{a^\delta}(y)) =  b^\delta T_a^{-\delta}(T_a^{\delta}(x)T_a^\delta(y)),
\end{displaymath}
where we have used $T_{a^k} = T_a^k$ in the last step. Combining, we get
\begin{displaymath}
    b^ix \cdot yb^j = b^j(b^\delta x\cdot y)b^j = b^j\cdot b^\delta T_a^{-\delta}(T_a^{\delta}(x)T_a^\delta(y)) \cdot b^j = b^i\cdot T_a^{-\delta}(T_a^{\delta}(x)T_a^\delta(y))\cdot b^j.\qedhere
\end{displaymath}
\end{proof}

Here are some variations on the identity \eqref{Eq:TKey}:

\begin{proposition}\label{Pr:TVariations}
Let $Q$ be a Moufang loop. Then
\begin{alignat}{3} 
    &a^{3i}x\cdot a^{3j}y &&= a^{3(i+j)}T_a^{-i-2j}(T_a^{i-j}(x)T_a^{i+2j}(y)) &&= T_a^{2i+j}(T_a^{i-j}(x)T_a^{i+2j}(y))a^{3(i+j)},\label{Eq:V1}\\
    &a^{3i}x\cdot ya^{3j} &&= a^{3(i+j)}T_a^{-i-2j}(T_a^{i-j}(x)T_a^{i-j}(y)) &&= T_a^{2i+j}(T_a^{i-j}(x)T_a^{i-j}(y))a^{3(i+j)},\label{Eq:V2}\\
    &xa^{3i}\cdot a^{3j}y &&= a^{3(i+j)}T_a^{-i-2j}(T_a^{-2i-j}(x)T_a^{i+2j}(y)) &&= T_a^{2i+j}(T_a^{-2i-j}(x)T_a^{i+2j}(y))a^{3(i+j)},\label{Eq:V3}\\
    &xa^{3i}\cdot ya^{3j} &&= a^{3(i+j)}T_a^{-i-2j}(T_a^{-2i-j}(x)T_a^{i-j}(y)) &&= T_a^{2i+j}(T_a^{-2i-j}(x)T_a^{i-j}(y))a^{3(i+j)}\label{Eq:V4}
\end{alignat}
for all $a,x,y\in Q$ and all $i,j\in\mathbb Z$.
\end{proposition}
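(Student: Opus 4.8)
The plan is to obtain all four identities from the already established \eqref{Eq:TKey} by repositioning the outer powers of $a$, using a single commutation rule together with power associativity. The rule I would isolate first is that for every $z\in Q$ and every $m\in\mathbb Z$,
\[
a^m z = T_a^m(z)\,a^m \qquad\text{and}\qquad z a^m = a^m\,T_a^{-m}(z).
\]
Both are instances of the observation that on the two-generated subloop $\langle a,z\rangle$ --- which is a group by diassociativity --- the conjugation $T_a$ restricts to ordinary conjugation $w\mapsto awa^{-1}$ (indeed $T_a(z)=(az)\rdiv a = aza^{-1}$ inside $\langle a,z\rangle$), so that, iterating, $T_a^m(z)=a^m z a^{-m}$ there, and the displayed equalities follow by associating inside $\langle a,z\rangle$.

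With this in hand I would first derive \eqref{Eq:V2}, whose left-hand side already matches that of \eqref{Eq:TKey}. Writing $M=T_a^{j-i}(w)$ with $w=T_a^{i-j}(x)T_a^{i-j}(y)$, the right-hand side of \eqref{Eq:TKey} is the triple product $a^{3i}\cdot M\cdot a^{3j}$, which is unambiguous since $(a^{3i},M,a^{3j})$ lies in the group $\langle a,M\rangle$. Moving $a^{3j}$ to the left by the commutation rule and then associating the two powers of $a$ inside $\langle a,\,T_a^{-3j}(M)\rangle$ gives $a^{3i}(a^{3j}T_a^{-3j}(M))=a^{3(i+j)}T_a^{-3j}(M)$; since $T_a^{-3j}T_a^{j-i}=T_a^{-i-2j}$, this is exactly the first form of \eqref{Eq:V2}. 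Symmetrically, pushing $a^{3i}$ to the right yields $T_a^{3i}(M)a^{3(i+j)}=T_a^{2i+j}(w)a^{3(i+j)}$, the second form. In fact the two forms of each of \eqref{Eq:V1}--\eqref{Eq:V4} are always interchanged by one further application of the commutation rule with $m=3(i+j)$, since $a^{3(i+j)}T_a^{p}(w)=T_a^{p+3(i+j)}(w)\,a^{3(i+j)}$.

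Finally, I would obtain \eqref{Eq:V1}, \eqref{Eq:V3} and \eqref{Eq:V4} from \eqref{Eq:V2} purely by substitution, converting each outer factor $a^{3k}$ from the side on which \eqref{Eq:V2} places it to the side required. Concretely, $a^{3j}y=T_a^{3j}(y)a^{3j}$ lets me replace $y$ by $T_a^{3j}(y)$ in \eqref{Eq:V2} to produce \eqref{Eq:V1}; $xa^{3i}=a^{3i}T_a^{-3i}(x)$ lets me replace $x$ by $T_a^{-3i}(x)$ to produce \eqref{Eq:V4}; and performing both replacements gives \eqref{Eq:V3}. Each substitution only shifts a $T_a$-exponent by $\pm 3i$ or $\pm 3j$ on the appropriate argument (e.g.\ $T_a^{i-j}T_a^{3j}=T_a^{i+2j}$ and $T_a^{i-j}T_a^{-3i}=T_a^{-2i-j}$), which is precisely what turns the exponents of \eqref{Eq:V2} into those of the remaining three identities. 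The only real obstacle is the bookkeeping of these $T_a$-exponents; the loop-theoretic content is confined to the commutation rule and to the repeated, and always legitimate, appeals to diassociativity whenever two powers of $a$ are associated with a single further element.
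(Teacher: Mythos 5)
Your proposal is correct and follows essentially the same route as the paper: reduce everything to \eqref{Eq:TKey}, derive \eqref{Eq:V2} by commuting the outer powers of $a$ past the middle factor via $a^m z = T_a^m(z)a^m$, and obtain \eqref{Eq:V1}, \eqref{Eq:V3}, \eqref{Eq:V4} from \eqref{Eq:V2} by the substitutions $a^{3j}y = T_a^{3j}(y)a^{3j}$ and $xa^{3i} = a^{3i}T_a^{-3i}(x)$. Your exponent bookkeeping checks out, and your explicit justification of the commutation rule through diassociativity is just a more careful spelling-out of what the paper uses implicitly.
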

\begin{proof}
Note that $a^kz = a^kza^{-k}a^k = T_a^k(z)a^k$, so it suffices to establish the first equality in each \eqref{Eq:V1}--\eqref{Eq:V4}. With $u=T_a^{j-i}(T_a^{i-j}(x) T_a^{i-j}(y))$, equation \eqref{Eq:TKey} yields
\begin{displaymath}
    a^{3i}x\cdot ya^{3j} = a^{3i}ua^{3j} = T_a^{3i}(u)a^{3i}a^{3j} = T_a^{2i+j}(T_a^{i-j}(x) T_a^{i-j}(y))a^{3(i+j)},
\end{displaymath}
which is \eqref{Eq:V2}. The remaining identities then follow from \eqref{Eq:V2} as $a^{3i}x\cdot a^{3j}y = a^{3i}x\cdot T_a^{3j}(y)a^{3j}$, $xa^{3i}\cdot a^{3j}y = a^{3i}T_a^{-3i}(x)\cdot T_a^{3j}(y)a^{3j}$ and $xa^{3i}\cdot ya^{3j} = a^{3i}T_a^{-3i}(x)\cdot ya^{3j}$.
\end{proof}

\section{Construction pairs and their properties}\label{Sc:CPP}

Construction pairs were defined in the Introduction, cf.~Definition \ref{Df:ConstructionPair}. In this section we will establish basic properties of construction pairs $(g,\gamma)$ on $(X,+)$. The condition \eqref{Eq:PairDef} will be often used in the form
\begin{displaymath}
    g(x)+g(y) = g(x+y+\gamma(x,y)+g^{-1}(\gamma(x,y)) + g^{-2}(\gamma(x,y))).
\end{displaymath}

\begin{lemma}\label{Lm:PairRadImg}
Let $(g,\gamma)$ be a construction pair on $(X,+)$ and let $i$ be an integer. Then:
\begin{enumerate}
\item[(i)] $g(0)=0$, $g(2x)=2g(x)$ and $g(-x)=-g(x)$ for all $x\in X$,
\item[(ii)] $\img{\gamma}\subseteq\rad{\gamma}$, $2X\subseteq\rad{\gamma}$ and $2\img{\gamma}=0$,
\item[(iii)] $g^i$ permutes both $\img{\gamma}$ and $\rad{\gamma}$,
\item[(iv)] $g^i(x+y) = g^i(x)+g^i(y)$ whenever $\{x,y\}\cap\rad{\gamma}\ne\emptyset$,
\item[(v)] $g^i$ restricts to an automorphism of $\rad{\gamma}$.
\end{enumerate}
\end{lemma}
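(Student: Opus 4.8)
The plan is to prove the five items in the order listed, as each leans on its predecessors. For (i) I would substitute convenient arguments into \eqref{Eq:PairDef}. Taking $y=x$ and using $\gamma(x,x)=0$ collapses the right-hand side to $2x$, so $g^{-1}(2g(x))=2x$ and hence $g(2x)=2g(x)$; specializing $x=0$ then gives $g(0)=0$; and taking $y=-x$, with $\gamma(x,-x)=-\gamma(x,x)=0$, gives $g(x)+g(-x)=g(0)=0$, i.e.\ $g(-x)=-g(x)$. Item (ii) is then immediate from the hypotheses on $\gamma$: the inclusion $\img\gamma\subseteq\rad\gamma$ is just the reformulation of \eqref{Eq:PairRad} noted after Definition~\ref{Df:ConstructionPair}; the identity $2\gamma(x,y)=0$ for symmetric alternating biadditive maps recorded in Section~\ref{Ss:Mappings} gives $2\img\gamma=0$; and biadditivity turns the same identity into $\gamma(2x,y)=2\gamma(x,y)=0$, so $2X\subseteq\rad\gamma$.

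The crux is (iii), where the workhorse is \eqref{Eq:PairProp}. For $\img\gamma$, rewriting \eqref{Eq:PairProp} as $g^{-1}\gamma(x,y)=\gamma(gx,y)$ shows at once that $g^{-1}(\img\gamma)\subseteq\img\gamma$; conversely, since $g$ is onto, writing $x=g(x')$ gives $\gamma(x,y)=\gamma(gx',y)=g^{-1}\gamma(x',y)\in g^{-1}(\img\gamma)$, so $g^{-1}(\img\gamma)=\img\gamma$, whence $g$ and every $g^i$ permute $\img\gamma$. For $\rad\gamma$ I would use \eqref{Eq:PairProp} in both directions. If $r\in\rad\gamma$ then $\gamma(gr,y)=g^{-1}\gamma(r,y)=g^{-1}(0)=0$ using $g(0)=0$ from (i), so $g(\rad\gamma)\subseteq\rad\gamma$; applying \eqref{Eq:PairProp} with $x=g^{-1}(r)$ gives $\gamma(r,y)=g^{-1}\gamma(g^{-1}r,y)$, hence $\gamma(g^{-1}r,y)=g(0)=0$ and $g^{-1}(\rad\gamma)\subseteq\rad\gamma$. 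Together with injectivity of $g$ these two inclusions force $g(\rad\gamma)=\rad\gamma$, so $g$ and every $g^i$ permute $\rad\gamma$. I expect this bidirectional bookkeeping with \eqref{Eq:PairProp} (and the fact that it secretly relies on $g(0)=0$) to be the main obstacle.

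With (iii) in hand, (iv) and (v) follow quickly. For (iv), the claim is symmetric in $x,y$, so I may assume $y\in\rad\gamma$; then $\gamma(x,y)=0$ and \eqref{Eq:PairDef} collapses to $g(x+y)=g(x)+g(y)$. The $g^{-1}$ version is obtained by setting $u=g^{-1}(x)$ and $v=g^{-1}(y)\in\rad\gamma$ (using (iii)) and applying the additivity of $g$ just established to $u+v$. An induction on $|i|$, in which the invariance of $\rad\gamma$ under $g^{\pm1}$ from (iii) keeps the radical argument inside $\rad\gamma$ at every step, then extends additivity to all $g^i$. Finally (v) is the combination of the two: by (iii) each $g^i$ restricts to a bijection of $\rad\gamma$, and by (iv) this restriction satisfies $g^i(r+s)=g^i(r)+g^i(s)$ for $r,s\in\rad\gamma$, hence is an automorphism of $(\rad\gamma,+)$.
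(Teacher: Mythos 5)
Your overall route through (ii)--(v) is sound and is essentially the paper's own argument, but part (i) as written is circular. In \eqref{Eq:PairDef} with $y=x$, using $\gamma(x,x)=0$ the right-hand side becomes $2x+g^{-1}(0)+g^{-2}(0)$, not $2x$: the terms $g^{-1}(\gamma(x,x))$ and $g^{-2}(\gamma(x,x))$ only vanish once you know $g^{-1}(0)=0$, i.e.\ $g(0)=0$. You propose to obtain $g(0)=0$ afterwards by specializing $x=0$ in $g(2x)=2g(x)$, so the deduction runs in a circle; all the substitution actually yields at this stage is $g^{-1}(2g(x))=2x+c$ with the unknown constant $c=g^{-1}(0)+g^{-2}(0)$, and the same constant contaminates the $y=-x$ computation. (You are clearly aware of the phenomenon, since you note in (iii) that \eqref{Eq:PairProp} ``secretly relies on $g(0)=0$'' --- but that reliance already occurs in (i).)

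The repair is exactly the paper's opening move: establish $g(0)=0$ first from \eqref{Eq:PairProp} rather than from \eqref{Eq:PairDef}. Biadditivity gives $\gamma(0,0)=0$, and \eqref{Eq:PairProp} with $x=y=0$ yields $g^{-1}(0)=g^{-1}(\gamma(0,0))=\gamma(g(0),0)=0$, hence $g(0)=0$. With this in hand your substitutions $y=x$ and $y=-x$ into \eqref{Eq:PairDef} do collapse as claimed and give $g(2x)=2g(x)$ and $g(-x)=-g(x)$. Everything else in your proposal --- the bidirectional use of \eqref{Eq:PairProp} for (iii), the reduction of (iv) to the case where one argument lies in $\rad{\gamma}$ followed by induction on $|i|$, and the assembly of (v) from (iii) and (iv) --- matches the paper's proof and is correct.
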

\begin{proof}
(i) We have $\gamma(x,0)=0$ thanks to biadditivity. Hence $g^{-1}(0) = g^{-1}\gamma(0,0) = \gamma(g(0),0) = 0$ by \eqref{Eq:PairProp} and $g(0)=0$ follows. Since $\gamma$ is alternating and $g(0)=0$, we then have $0 = \gamma(x,x)+g^{-1}\gamma(x,x) + g^{-2}\gamma(x,x) = g^{-1}(g(x)+g(x)) - x-x = g^{-1}(2g(x)) - 2x$ by \eqref{Eq:PairDef}, which yields $2g(x)=g(2x)$. Finally, $\gamma(x,-x)=-\gamma(x,x)=0$, so $0=\gamma(x,-x)+g^{-1}\gamma(x,-x)+g^{-2}\gamma(x,-x) = g^{-1}(g(x)+g(-x))$ by \eqref{Eq:PairDef}, and hence $g(x)+g(-x)=0$.

(ii) The condition $\img{\gamma}\subseteq\rad{\gamma}$ is a restatement of \eqref{Eq:PairRad}. Any symmetric alternating biadditive mapping satisfies $0=\gamma(2x,y) = 2\gamma(x,y)$.

(iii) We have $x\in\rad{\gamma}$ iff $\gamma(x,y)=0$ for all $y\in X$ iff $g^{-1}\gamma(x,y) = g^{-1}(0)=0$ for all $y\in X$ iff $\gamma(g(x),y)=0$ for all $y\in X$ by \eqref{Eq:PairProp} iff $g(x)\in\rad{\gamma}$. Similarly, $z\in\img{\gamma}$ iff $z = \gamma(x,y)$ for some $x,y\in X$ iff $g^{-1}(z) = g^{-1}\gamma(x,y)=\gamma(g(x),y)$ for some $x,y\in X$ by \eqref{Eq:PairProp} iff $g^{-1}(z)\in\img{\gamma}$.

(iv) Suppose without loss of generality that $x\in\rad{\gamma}$. Then $\gamma(x,y)=0$ and, by (i), $g^i\gamma(x,y)=0$ for all $i$. Thus $0 = \gamma(x,y)+g^{-1}\gamma(x,y)+g^{-2}\gamma(x,y) = g^{-1}(g(x)+g(y)) - x-y$ by \eqref{Eq:PairDef}, and $g(x+y)=g(x)+g(y)$ follows. Suppose that the claim holds for some $i$. By (iii), $g^k(x)\in\rad{\gamma}$ for every integer $k$. By the induction assumption, we then have $g^{i+1}(x+y) = g^i(g(x+y)) = g^i(g(x)+g(y)) = g^i(g(x))+g^i(g(y)) = g^{i+1}(x)+g^{i+1}(y)$. Furthermore, $g^i(g^{-i}(x)+g^{-i}(y)) = g^i(g^{-i}(x))+g^i(g^{-i}(y)) = x+y$ and hence $g^{-i}(x+y)=g^{-i}(x)+g^{-i}(y)$. The claim therefore holds for every $i$.

Part (v) follows from (iii) and (iv).
\end{proof}

\begin{lemma}\label{Lm:iPairProp}
Let $(g,\gamma)$ be a construction pair on $(X,+)$. Then
\begin{equation}\label{Eq:iPairProp}
    \gamma(g^ix,g^jy) = g^{-i-j}\gamma(x,y)
\end{equation}
for all $x,y\in X$ and $i,j\in\mathbb Z$.
\end{lemma}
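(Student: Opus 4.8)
The plan is to reduce the two-variable statement \eqref{Eq:iPairProp} to two one-variable statements, each proved by induction on the exponent. The only inputs needed are the defining condition \eqref{Eq:PairProp}, which reads $\gamma(gx,y) = g^{-1}\gamma(x,y)$, and the symmetry of $\gamma$. First I would record the second-slot analogue of \eqref{Eq:PairProp} coming from symmetry: since $\gamma(x,gy) = \gamma(gy,x) = g^{-1}\gamma(y,x) = g^{-1}\gamma(x,y)$, one copy of $g$ can be pulled out of the second argument at the cost of one $g^{-1}$ applied to the value, exactly as in the first argument.

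Next I would prove $\gamma(g^i x, y) = g^{-i}\gamma(x,y)$ for all $i\in\mathbb Z$ by induction on $i$. The case $i=0$ is trivial. For the step upward, $\gamma(g^{i+1}x,y) = \gamma(g(g^i x),y) = g^{-1}\gamma(g^i x,y) = g^{-1}g^{-i}\gamma(x,y) = g^{-(i+1)}\gamma(x,y)$, using \eqref{Eq:PairProp} and the induction hypothesis. By symmetry, the identical argument (via the second-slot version recorded above) gives $\gamma(x,g^j y) = g^{-j}\gamma(x,y)$ for all $j\ge 0$, and I would extend both to negative exponents as described below.

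Finally I would combine the two one-variable identities. Applying the first with the second argument $g^j y$ and then the second identity yields $\gamma(g^i x, g^j y) = g^{-i}\gamma(x,g^j y) = g^{-i}g^{-j}\gamma(x,y) = g^{-(i+j)}\gamma(x,y)$, where the last equality is just $g^{-i}\circ g^{-j} = g^{-(i+j)}$ for the permutation $g$. This is precisely \eqref{Eq:iPairProp}.

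The argument is essentially routine, and the only point requiring a little care is the treatment of negative exponents in the induction, where \eqref{Eq:PairProp} cannot be applied directly. Here I would instead apply it to $g^{-1}x$ in place of $x$: this gives $\gamma(x,y) = \gamma(g(g^{-1}x),y) = g^{-1}\gamma(g^{-1}x,y)$, hence $\gamma(g^{-1}x,y) = g\gamma(x,y)$, and iterating this relation downward carries the induction through all of $\mathbb Z$ (and likewise for the second slot). I would single this step out explicitly, since it is the one place where the inverse of $g$ enters nontrivially.
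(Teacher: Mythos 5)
Your proposal is correct and follows essentially the same route as the paper: reduce to the one-variable identity $\gamma(g^ix,y)=g^{-i}\gamma(x,y)$, prove it by two-sided induction (with the downward step obtained by substituting $g^{-1}x$ into \eqref{Eq:PairProp}), and then handle the second slot via the symmetry of $\gamma$. The only cosmetic difference is that you run a separate induction for the second argument, while the paper applies symmetry once at the end; the content is identical.
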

\begin{proof}
It suffices to show
\begin{equation}\label{Eq:iPPAux}
    \gamma(g^ix,y) = g^{-i}\gamma(x,y)
\end{equation}
for all $i\in\mathbb Z$, since then $\gamma(g^ix,g^jy) = g^{-i}\gamma(x,g^jy) = g^{-i}\gamma(g^jy,x) = g^{-i}g^{-j}\gamma(y,x) = g^{-i-j}\gamma(x,y)$ by symmetry of $\gamma$. Let us prove \eqref{Eq:iPPAux}. For $i=0$ there is nothing to show and for $i=1$ we recover \eqref{Eq:PairProp}. If \eqref{Eq:iPPAux} holds for $i$ then $\gamma(g^{i+1}x,y) = \gamma(g^igx,y) = g^{-i}\gamma(gx,y) = g^{-i}g^{-1}\gamma(x,y) = g^{-i-1}\gamma(x,y)$ shows that \eqref{Eq:iPPAux} holds for $i+1$. To prove \eqref{Eq:iPPAux} for $i=-1$, substitute $g^{-1}(x)$ for $x$ in \eqref{Eq:PairProp} to obtain $g^{-1}\gamma(g^{-1}x,y) = \gamma(x,y)$, so $\gamma(g^{-1}x,y) = g\gamma(x,y)$. Finally, if \eqref{Eq:iPPAux} holds for $i$ then $\gamma(g^{i-1}x,y) = \gamma(g^ig^{-1}x,y) = g^{-i}\gamma(g^{-1}x,y) = g^{-i}g\gamma(x,y) = g^{-i+1}\gamma(x,y)$ shows that \eqref{Eq:iPPAux} holds for $i-1$.
\end{proof}

\begin{lemma}\label{Lm:PairInverse}
Let $(g,\gamma)$ be a construction pair on $(X,+)$. Then $(g^{-1},g\gamma)$ is a construction pair on $(X,+)$.
\end{lemma}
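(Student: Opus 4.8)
The plan is to verify directly that $(g^{-1},g\gamma)$ satisfies every requirement of Definition \ref{Df:ConstructionPair}, drawing throughout on Lemmas \ref{Lm:PairRadImg} and \ref{Lm:iPairProp}. I will abbreviate $c=\gamma(x,y)$ and use freely that $c\in\img\gamma\subseteq\rad\gamma$, that $g^i$ permutes $\img\gamma$, and that $2\img\gamma=0$, so each $g^ic$ lies in $\img\gamma$ and satisfies $g^ic=-g^ic$.

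First I would check that $g\gamma$ is again symmetric, alternating and biadditive. Symmetry and the alternating property follow at once from the corresponding properties of $\gamma$ together with $g(0)=0$ from Lemma \ref{Lm:PairRadImg}(i). For biadditivity, $(g\gamma)(x+y,z)=g(\gamma(x,z)+\gamma(y,z))$, and since both summands lie in $\rad\gamma$, Lemma \ref{Lm:PairRadImg}(iv) lets $g$ distribute, giving $(g\gamma)(x,z)+(g\gamma)(y,z)$; additivity in the second argument then follows by symmetry.

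Next I would dispatch the two easier axioms. For \eqref{Eq:PairRad} applied to $(g^{-1},g\gamma)$ I must show $(g\gamma)((g\gamma)(x,y),z)=0$; but $(g\gamma)(x,y)=gc$ lies in $g(\img\gamma)\subseteq\rad\gamma$ by Lemma \ref{Lm:PairRadImg}(iii), so $\gamma(gc,z)=0$ and hence $g\gamma(gc,z)=g(0)=0$. For \eqref{Eq:PairProp} applied to $(g^{-1},g\gamma)$, the required identity reads $g\big((g\gamma)(x,y)\big)=(g\gamma)(g^{-1}x,y)$; the left side is $g^2c$, while the right side is $g\big(\gamma(g^{-1}x,y)\big)=g(gc)=g^2c$ by the case $i=-1$, $j=0$ of \eqref{Eq:iPairProp}.

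The main computation, and the only step needing care, is \eqref{Eq:PairDef} for $(g^{-1},g\gamma)$, which unwinds to $g(g^{-1}x+g^{-1}y)=x+y+gc+g^2c+g^3c$. I would obtain this by first rewriting the construction pair condition for $(g,\gamma)$ in the additive form $g(a)+g(b)=g(a+b)+g\gamma(a,b)+\gamma(a,b)+g^{-1}\gamma(a,b)$, which is legitimate because the three $\gamma$-terms lie in $\rad\gamma$ and so may be peeled off one at a time via Lemma \ref{Lm:PairRadImg}(iv). Substituting $a=g^{-1}x$ and $b=g^{-1}y$, and using $\gamma(g^{-1}x,g^{-1}y)=g^2c$ from \eqref{Eq:iPairProp}, turns the left side into $x+y$ and yields $x+y=g(g^{-1}x+g^{-1}y)+g^3c+g^2c+gc$; solving for $g(g^{-1}x+g^{-1}y)$ and replacing each $-g^ic$ by $g^ic$ through $2\img\gamma=0$ produces exactly the desired expression. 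The only real hazard here is the bookkeeping of $g$-powers and the sign flips, since everything else is forced by the lemmas already established.
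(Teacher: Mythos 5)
Your proof is correct and follows essentially the same route as the paper's: the same use of Lemma \ref{Lm:PairRadImg} for symmetry, alternation, biadditivity and \eqref{Eq:PairRad}, and the same additive rewriting of \eqref{Eq:PairDef} followed by the substitution $x\mapsto g^{-1}(x)$, $y\mapsto g^{-1}(y)$ and \eqref{Eq:iPairProp}. In fact you are slightly more thorough, since you also verify \eqref{Eq:PairProp} for $(g^{-1},g\gamma)$, a (routine) check the paper's proof leaves implicit.
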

\begin{proof}
We have $g\gamma(x,y) = g\gamma(y,x)$ by symmetry of $\gamma$. As $g(0)=0$ by Lemma \ref{Lm:PairRadImg}, we have $g\gamma(x,x) = g(0)=0$. Since $g$ restricts to an automorphism of $\rad{\gamma}$ and $\img{\gamma}\subseteq\rad{\gamma}$ by Lemma \ref{Lm:PairRadImg}, we have $g\gamma(x+y,z) = g(\gamma(x,z)+\gamma(y,z)) = g\gamma(x,z)+g\gamma(y,z)$. This proves that that $g\gamma$ is symmetric, alternating and biadditive.

Now, $g\gamma(g\gamma(x,y),z) = gg^{-1}\gamma(\gamma(x,y),z) = \gamma(\gamma(x,y),z) = 0$ by \eqref{Eq:PairProp} and \eqref{Eq:PairRad}. Hence $\img{g\gamma}\subseteq\rad{g\gamma}$.

Finally, $g(x)+g(y) = g(x+y+\gamma(x,y)+g^{-1}\gamma(x,y)+g^{-2}\gamma(x,y)) = g(x+y)+g\gamma(x,y)+\gamma(x,y)+g^{-1}\gamma(x,y)$ by Lemma \ref{Lm:PairRadImg}. Substituting $g^{-1}(x)$ for $x$, $g^{-1}(y)$ for $y$ and using \eqref{Eq:iPairProp} yields $x+y = g(g^{-1}x+g^{-1}y) + g^3\gamma(x,y)+g^2\gamma(x,y)+g\gamma(x,y)$. Since $\gamma(x,y)=-\gamma(x,y)$ and $g(-z)=-g(z)$ by Lemma \ref{Lm:PairRadImg}, we deduce
\begin{displaymath}
    g(g^{-1}x+g^{-1}y) = x+y+g\gamma(x,y)+g^2\gamma(x,y)+g^3\gamma(x,y),
\end{displaymath}
which is the analog of \eqref{Eq:PairDef} for $(g^{-1},g\gamma)$.
\end{proof}

Lemma \ref{Lm:Interval} lists basic properties of the intervals $I(i,j)$ of \eqref{Eq:Interval}. For subsets $U$, $V\subseteq\mathbb Z$, let
\begin{displaymath}
    U\oplus V = (U\cup V)\setminus(U\cap V)
\end{displaymath}
be the symmetric difference of $U$ an $V$.

\begin{lemma}\label{Lm:Interval}
Let $i,j,k$ be integers. Then:
\begin{enumerate}
\item[(i)] $I(i,j)=I(j,i)$,
\item[(ii)] $I(i,j)+k = I(i+k,j+k)$,
\item[(ii)] $I(i,k)\oplus I(j,k) = I(i,j)$.
\end{enumerate}
\end{lemma}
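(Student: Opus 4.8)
The plan is to encode each interval by its characteristic function over $\mathbb{F}_2$, which turns the symmetric difference $\oplus$ into ordinary addition and lets all three claims be read off from a single formula, thereby bypassing the case analysis on the relative order of $i$, $j$, $k$ that a direct argument would require. Let $W$ denote the $\mathbb{F}_2$-vector space of all functions $\mathbb{Z}\to\mathbb{F}_2$ under pointwise addition, and for a finite subset $A\subseteq\mathbb{Z}$ let $\chi_A\in W$ be its characteristic function. Since a point lies in $A\oplus B$ precisely when it lies in exactly one of $A$, $B$, we have $\chi_{A\oplus B}=\chi_A+\chi_B$ in $W$. For $n\in\mathbb{Z}$ let $e_n\in W$ be the characteristic function of the ray $\{m\in\mathbb{Z}:m\ge n\}$.

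The crux of the argument is the identity
\[
    \chi_{I(i,j)} = e_i + e_j
\]
in $W$, valid for all $i,j\in\mathbb{Z}$. I would verify it by a single comparison: both sides are symmetric in $i$ and $j$, so I may assume $i\le j$, and then for any $m$ exactly one of $m\ge i$, $m\ge j$ holds if and only if $i\le m<j$, i.e. if and only if $m\in I(i,j)$ (the case $i=j$ giving $e_i+e_i=0=\chi_{\emptyset}$). This is the only step that touches the piecewise definition \eqref{Eq:Interval}, and it is the main (though minor) obstacle, in that one must phrase it so that orientation is irrelevant; working over $\mathbb{F}_2$ is exactly what makes the two endpoints interchangeable and removes the need to track which of $i$, $j$ is larger.

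With the identity in hand the three claims are immediate. Claim (i) holds because $e_i+e_j=e_j+e_i$. For claim (ii), note that $e_{n+k}$ is the translate of $e_n$ by $k$ (as $m\ge n+k$ iff $m-k\ge n$); hence $e_{i+k}+e_{j+k}$ is the translate of $e_i+e_j$ by $k$, which says that $\chi_{I(i+k,j+k)}$ is the translate of $\chi_{I(i,j)}$, i.e. $I(i,j)+k=I(i+k,j+k)$. For the symmetric-difference claim (iii) I would simply compute
\[
    \chi_{I(i,k)}+\chi_{I(j,k)} = (e_i+e_k)+(e_j+e_k) = e_i+e_j = \chi_{I(i,j)},
\]
using $2e_k=0$ in $W$, and then translate back through $\chi_{A\oplus B}=\chi_A+\chi_B$ to conclude $I(i,k)\oplus I(j,k)=I(i,j)$. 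The cancellation of the common term $e_k$ is precisely the telescoping that a case-by-case proof would otherwise have to reconstruct by hand across the six orderings of $i$, $j$, $k$.
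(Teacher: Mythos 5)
Your proof is correct, but it takes a different route from the paper. The paper disposes of (i) and (ii) as immediate from the definition and proves (iii) by a case analysis: using the symmetry in $i$ and $j$ it reduces to the three orderings $i\le j\le k$, $i\le k\le j$ and $k\le i\le j$, writes $I(i,j)=[i,j)$ in each, and checks the symmetric difference directly. You instead linearize the whole problem by passing to characteristic functions in the $\mathbb F_2$-vector space $W$ of functions $\mathbb Z\to\mathbb F_2$ and establishing the single identity $\chi_{I(i,j)}=e_i+e_j$, where $e_n$ is the indicator of the ray $\{m:m\ge n\}$; all three claims then follow from commutativity, translation-equivariance of the $e_n$, and cancellation of $e_k$ in characteristic $2$. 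Your one verification (that $e_i+e_j$ is supported exactly on $I(i,j)$, checked for $i\le j$ and extended by symmetry of both sides) is sound, and recovering set equalities from equalities of characteristic functions is immediate. What your approach buys is the elimination of the case analysis on the relative position of $k$ --- the cancellation $e_k+e_k=0$ does that work uniformly --- at the modest cost of introducing the auxiliary space $W$; the paper's argument is more pedestrian but entirely self-contained and arguably quicker to read for a three-case check. One cosmetic point: you only need $\chi_A$ for finite $A$ when you invert at the end, but the rays $\{m:m\ge n\}$ are infinite, so it is worth stating (as you implicitly do by taking $W$ to be all functions, not just finitely supported ones) that the ambient space accommodates the $e_n$ even though each individual sum $e_i+e_j$ is finitely supported.
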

\begin{proof}
Parts (i) and (ii) are clear from the definition of $I(i,j)$. For (iii), thanks to symmetry it suffices to discuss the three cases $i\le j\le k$, $i\le k\le j$ and $k\le i\le j$. Note that then $I(i,j) = [i,j) = \{i,i+1,\dots,j-1\}$. If $i\le j\le k$ then $I(i,k)\oplus I(j,k) = [i,k)\oplus [j,k) = [i,j)$, if $i\le k\le j$ then $I(i,k)\oplus I(j,k) = [i,k)\oplus [k,j) = [i,j)$, and if $k\le i\le j$ then $I(i,k)\oplus I(j,k) = [k,i)\oplus [k,j) = [i,j)$.
\end{proof}

\begin{lemma}
Let $(g,\gamma)$ be a construction pair on $(X,+)$. Then
\begin{equation}\label{Eq:iPairDef}
    g^{-i}(g^ix+g^iy) = x+y + \lcsum{k\in I(0,3i)}g^{-k}\gamma(x,y)
\end{equation}
for all $x,y\in X$ and $i,j\in\mathbb Z$.
\end{lemma}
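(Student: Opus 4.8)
The plan is to prove \eqref{Eq:iPairDef} not by two separate inductions but by isolating a single recurrence that holds for \emph{all} integers $i$ and then telescoping from the trivial base case $i=0$. Abbreviate $D(i)=g^{-i}(g^ix+g^iy)$ and $S(i)=\lcsum{k\in I(0,3i)}g^{-k}\gamma(x,y)$, so that the asserted identity reads $D(i)=x+y+S(i)$. Since $I(0,0)=\emptyset$, we have $D(0)=x+y$ and $S(0)=0$, which settles $i=0$. My aim is to show that both sequences satisfy the same first-order recurrence with increment $h(i)=g^{-3i}\gamma(x,y)+g^{-3i-1}\gamma(x,y)+g^{-3i-2}\gamma(x,y)$, namely $D(i+1)=D(i)+h(i)$ and $S(i+1)=S(i)+h(i)$; then $D(i)-S(i)$ is constant on $\mathbb Z$, hence equal to its value $x+y$ at $i=0$, and the claim follows in both directions at once.

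First I would derive the recurrence for $D$. Writing $u=g^ix$ and $v=g^iy$, condition \eqref{Eq:PairDef} applied to $u,v$ gives $g^{-1}(gu+gv)=u+v+\gamma(u,v)+g^{-1}\gamma(u,v)+g^{-2}\gamma(u,v)$, and by \eqref{Eq:iPairProp} we may replace $\gamma(u,v)=\gamma(g^ix,g^iy)$ by $g^{-2i}\gamma(x,y)$. Applying $g^{-i}$ to both sides and noting that $D(i+1)=g^{-i}\big(g^{-1}(gu+gv)\big)$, I use Lemma \ref{Lm:PairRadImg} decisively: the three $\gamma$-terms lie in $\img{\gamma}\subseteq\rad{\gamma}$, and $g^{-i}$ restricts to an automorphism of $\rad{\gamma}$ and is additive whenever one summand lies in $\rad{\gamma}$. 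Hence $g^{-i}$ distributes, yielding $D(i+1)=D(i)+g^{-3i}\gamma(x,y)+g^{-3i-1}\gamma(x,y)+g^{-3i-2}\gamma(x,y)=D(i)+h(i)$. This step uses no induction and is valid for every $i\in\mathbb Z$.

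It remains to check the matching recurrence $S(i+1)=S(i)+h(i)$, and here lies the one genuinely delicate point. Because $\gamma(x,y)\in\img{\gamma}$ and $g^{k}$ permutes $\img{\gamma}$, every term $g^{-k}\gamma(x,y)$ is $2$-torsion by Lemma \ref{Lm:PairRadImg}(ii). Consequently, for index sets $A,B\subseteq\mathbb Z$ the sums over $A$ and over $B$ add to the sum over the symmetric difference $A\oplus B$, since the terms indexed by $A\cap B$ occur twice and cancel. Thus $S(i+1)+S(i)=\lcsum{k\in I(0,3i+3)\oplus I(0,3i)}g^{-k}\gamma(x,y)$, and Lemma \ref{Lm:Interval}(i),(iii) computes $I(0,3i+3)\oplus I(0,3i)=I(3i,3i+3)=\{3i,3i+1,3i+2\}$ uniformly in the sign of $i$. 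Since $2$-torsion makes $S(i+1)+S(i)=S(i+1)-S(i)$, this gives exactly $S(i+1)-S(i)=h(i)$. The hard part is precisely this reconciliation of the interval bookkeeping across positive and negative $i$: it works only because the $2$-torsion of $\img{\gamma}$ turns signed subtraction of overlapping interval-sums into the clean symmetric-difference identity of Lemma \ref{Lm:Interval}(iii), avoiding any case analysis on the sign of $i$. With both recurrences in hand the telescoping argument of the first paragraph completes the proof.
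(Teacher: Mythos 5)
Your proof is correct, and it takes a genuinely different route from the paper's. The paper proves \eqref{Eq:iPairDef} by two separate inductions: upward from $i=0$ using \eqref{Eq:PairDef} directly, and downward from $i=0$ after first invoking Lemma \ref{Lm:PairInverse} to know that $(g^{-1},g\gamma)$ is itself a construction pair (which supplies the base case $i=-1$ and drives the descending step). You instead isolate a single first-order recurrence $D(i+1)=D(i)+h(i)$, derived uniformly for every $i\in\mathbb Z$ from \eqref{Eq:PairDef} applied at $u=g^i x$, $v=g^i y$, together with \eqref{Eq:iPairProp} and the additivity of $g^{-i}$ on sums with a summand in $\rad{\gamma}$ (Lemma \ref{Lm:PairRadImg}); the matching recurrence for the interval sum $S(i)$ follows from the $2$-torsion of $\img{\gamma}$ and the symmetric-difference identity $I(0,3i+3)\oplus I(0,3i)=I(3i,3i+3)$ of Lemma \ref{Lm:Interval}, which indeed holds uniformly in the sign of $i$. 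All the ingredients you cite are established in the paper before this lemma, so there is no circularity. What your version buys is the elimination of the sign case-split and of any appeal to Lemma \ref{Lm:PairInverse}; what the paper's version buys is that it exercises Lemma \ref{Lm:PairInverse} (needed elsewhere anyway) and keeps each induction step a direct transcription of \eqref{Eq:PairDef} without routing the bookkeeping through symmetric differences. Both arguments rest on the same three pillars --- \eqref{Eq:PairDef}, the commutation rule \eqref{Eq:iPairProp}, and Lemma \ref{Lm:PairRadImg} --- so yours is best viewed as a uniformization of the paper's proof rather than a new idea, but it is a clean and valid one.
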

\begin{proof}
There is nothing to prove for $i=0$ since $I(0,0)=\emptyset$. For $i=1$ we get $I(0,3i)=I(0,3)=\{0,1,2\}$ and we recover \eqref{Eq:PairDef}. If \eqref{Eq:iPairDef} holds for some $i\ge 0$, then
\begin{displaymath}
    g^{-i-1}(g^{i+1}x+g^{i+1}y) = g^{-1}g^{-i}(g^igx+g^ig(y)) = g^{-1}\Big(gx+gy+\lcsum{k\in I(0,3i)}g^{-k}\gamma(gx,gy)\Big).
\end{displaymath}
We have $\gamma(gx,gy) = g^{-2}\gamma(x,y)$ by \eqref{Eq:iPairProp}. By Lemma \ref{Lm:PairRadImg}, $g^{-k}\gamma(gx,gy)\in\img{\gamma}\subseteq\rad{\gamma}$, and a power of $g$ behaves as a homomorphism if at least one of the summands lies in $\rad{\gamma}$. Therefore $g^{-i-1}(g^{i+1}x+g^{i+1}y)$ is equal to
\begin{multline*}
	g^{-1}(gx+gy) + \lcsum{k\in I(0,3i)}g^{-1}g^{-k}g^{-2}\gamma(x,y) = g^{-1}(gx+gy) + \lcsum{k\in I(0,3i)}g^{-k-3}\gamma(x,y)\\
      = x+y+\sum_{k\in\{0,1,2\}}g^{-k}\gamma(x,y) + (g^{-3}\gamma(x,y)+\cdots+g^{-3i-2}\gamma(x,y)) = x+y+\lcsum{k\in I(0,3(i+1))}g^{-k}\gamma(x,y),
\end{multline*}
which gives \eqref{Eq:iPairDef} for $i+1$.

By Lemma \ref{Lm:PairInverse}, $(g^{-1},g\gamma)$ is a construction pair on $(X,+)$. For $i=-1$ we get $I(0,3i) = I(0,-3) = \{-3,-2,-1\}$ and \eqref{Eq:iPairDef} becomes
\begin{displaymath}
    g(g^{-1}x+g^{-1}y) = x+y + g\gamma(x,y)+g^2\gamma(x,y)+g^3\gamma(x,y),
\end{displaymath}
which is the analog of \eqref{Eq:PairDef} for $(g^{-1},g\gamma)$. If \eqref{Eq:iPairDef} holds for some $i\le 0$, then
\begin{displaymath}
    g^{-i+1}(g^{i-1}x+g^{i-1}y) = gg^{-i}(g^ig^{-1}x+g^ig^{-1}y) = g\Big(g^{-1}x+g^{-1}y+\lcsum{k\in I(0,3i)}g^{-k}\gamma(g^{-1}x,g^{-1}y)\Big).
\end{displaymath}
We have $\gamma(g^{-1}x,g^{-1}y) = g^2\gamma(x,y)$ by \eqref{Eq:iPairProp}. Using Lemma \ref{Lm:PairRadImg} as above, we see that $g^{-i+1}(g^{i-1}x+g^{i-1}y)$ is equal to
\begin{multline*}
	g(g^{-1}x+g^{-1}y) + \lcsum{k\in I(0,3i)}gg^{-k}g^2\gamma(x,y) = g(g^{-1}x+g^{-1}y) + \lcsum{k\in I(0,3i)}g^{-k+3}\gamma(x,y)\\
        = x+y+\sum_{k\in\{1,2,3\}} g^k\gamma(x,y)+(g^4\gamma(x,y)+\cdots+g^{3i+3}\gamma(x,y)) = x+y+\lcsum{k\in I(0,3(i-1))}g^{-k}\gamma(x,y),
\end{multline*}
which gives \eqref{Eq:iPairDef} for $i-1$.
\end{proof}

\begin{corollary}\label{Cr:iPairFormula}
Let $(g,\gamma)$ be a construction pair on $(X,+)$ and let $i\in\mathbb Z$. Then
\begin{equation}\label{Eq:iPairFormula}
    g^{-i}(x+y) = g^{-i}(x)+g^{-i}(y) + \lcsum{k\in I(-2i,i)}g^{-k}\gamma(x,y)
\end{equation}
for every $x,y\in X$.
\end{corollary}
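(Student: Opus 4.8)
The plan is to derive \eqref{Eq:iPairFormula} directly from the previously established identity \eqref{Eq:iPairDef} by a single substitution followed by reindexing; no genuinely new computation is needed. Concretely, I would substitute $g^{-i}(x)$ for $x$ and $g^{-i}(y)$ for $y$ in \eqref{Eq:iPairDef}. On the left-hand side the inner expression collapses via $g^ig^{-i}=\id{X}$, giving $g^{-i}(g^ig^{-i}(x)+g^ig^{-i}(y)) = g^{-i}(x+y)$, which is exactly the left-hand side of the target formula.

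On the right-hand side, the substitution produces $g^{-i}(x)+g^{-i}(y)$ together with a sum of terms $g^{-k}\gamma(g^{-i}x,g^{-i}y)$ over $k\in I(0,3i)$. Here I would invoke \eqref{Eq:iPairProp} to rewrite $\gamma(g^{-i}x,g^{-i}y) = g^{2i}\gamma(x,y)$, so that each summand becomes $g^{-k+2i}\gamma(x,y)$. Thus the whole right-hand side equals
\begin{displaymath}
    g^{-i}(x)+g^{-i}(y) + \lcsum{k\in I(0,3i)}g^{-k+2i}\gamma(x,y).
\end{displaymath}

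The final step is to reindex the sum. Setting $\ell = k-2i$, so that $-k+2i = -\ell$, the index $\ell$ runs over the translated set $I(0,3i)-2i$, which by Lemma \ref{Lm:Interval}(ii) equals $I(-2i,i)$. This turns the sum into $\sum_{\ell\in I(-2i,i)}g^{-\ell}\gamma(x,y)$ and yields \eqref{Eq:iPairFormula} exactly. I do not expect any real obstacle here, since the statement is essentially a change of variables in an identity already proved; the only point requiring minor care is the bookkeeping of the index shift and confirming that the interval translates correctly under Lemma \ref{Lm:Interval}(ii), which applies for every integer $i$ regardless of sign.
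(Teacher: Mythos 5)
Your proposal is correct and matches the paper's own proof essentially verbatim: both substitute $g^{-i}(x)$, $g^{-i}(y)$ into \eqref{Eq:iPairDef}, apply \eqref{Eq:iPairProp} to get $\gamma(g^{-i}x,g^{-i}y)=g^{2i}\gamma(x,y)$, and reindex the sum using $I(0,3i)-2i=I(-2i,i)$ from Lemma \ref{Lm:Interval}(ii). No gaps.
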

\begin{proof}
By \eqref{Eq:iPairDef} we have
\begin{displaymath}
    g^{-i}(x+y) = g^{-i}(g^ig^{-i}x+g^ig^{-i}y) = g^{-i}(x)+g^{-i}(y) + \lcsum{k\in I(0,3i)}g^{-k}\gamma(g^{-i}x,g^{-i}y).
\end{displaymath}
By Lemma \ref{Lm:iPairProp}, $\gamma(g^{-i}x,g^{-i}y) = g^{2i}\gamma(x,y)$. Therefore
\begin{displaymath}
    \lcsum{k\in I(0,3i)}g^{-k}\gamma(g^{-i}x,g^{-i}y) = \lcsum{k\in I(0,3i)}g^{-k+2i}\gamma(x,y)  = \lcsum{k\in I(0,3i)-2i}g^{-k}\gamma(x,y).
\end{displaymath}
As $I(0,3i)-2i = I(-2i,i)$ by Lemma \ref{Lm:Interval}, we are done.
\end{proof}

\section{Moufang loops obtained from construction pairs}\label{Sc:MCP}

In this section we introduce a multiplication formula on $C\times X$ for a cyclic group $C$ and a construction pair $(g,\gamma)$ on $(X,+)$, cf.~\eqref{Eq:MultGeneral}. We give a necessary and sufficient condition for the resulting loop to be a Moufang loop.

To further simplify the notation employed in Section \ref{Sc:CPP}, let
\begin{displaymath}
    \mysum Uxy = \sum_{k\in U}g^{-k}\gamma(x,y),
\end{displaymath}
for a fixed construction pair $(g,\gamma)$ on $(X,+)$ and any $U\subseteq\mathbb Z$ and $x,y\in\mathbb Z$.

\begin{lemma}\label{Lm:MySum}
Let $(g,\gamma)$ be a construction pair on $(X,+)$, $U,V\subseteq\mathbb Z$, $x,y\in X$ and $i,j\in\mathbb Z$. Then:
\begin{enumerate}
\item[(i)] $\mysum U{g^ix}{g^jy} = \mysum {U+i+j}xy$,
\item[(ii)] $g^i\mysum Uxy = \mysum{U-i}xy$,
\item[(iii)] $\mysum Uxy+\mysum Vxy = \mysum{U\oplus V}xy$
\end{enumerate}
\end{lemma}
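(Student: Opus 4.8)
The plan is to unfold the abbreviation $\mysum{U}{x}{y}=\sum_{k\in U}g^{-k}\gamma(x,y)$ in each part and then reduce everything to the structural facts already collected in Lemma \ref{Lm:PairRadImg} together with the ``slide'' rule \eqref{Eq:iPairProp} of Lemma \ref{Lm:iPairProp}; throughout I take $U,V$ to be finite so that the sums are defined. For part (i) I would apply \eqref{Eq:iPairProp} termwise: since $\gamma(g^ix,g^jy)=g^{-i-j}\gamma(x,y)$, every summand becomes
\begin{displaymath}
    g^{-k}\gamma(g^ix,g^jy)=g^{-k}g^{-i-j}\gamma(x,y)=g^{-(k+i+j)}\gamma(x,y),
\end{displaymath}
and re-indexing $k\mapsto k+i+j$ turns the index set $U$ into $U+i+j$, which is exactly $\mysum{U+i+j}{x}{y}$.

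The only genuinely delicate point is part (ii), where I must move $g^i$ inside a sum. Here the key observation is that every summand $g^{-k}\gamma(x,y)$ lies in $\img{\gamma}\subseteq\rad{\gamma}$ by Lemma \ref{Lm:PairRadImg}(ii)--(iii); since $g^i$ restricts to an automorphism of the subgroup $\rad{\gamma}$ by Lemma \ref{Lm:PairRadImg}(v), and the whole finite sum lies in $\rad{\gamma}$, I may distribute $g^i$ over it. Then $g^ig^{-k}=g^{-(k-i)}$, and re-indexing $k\mapsto k-i$ sends $U$ to $U-i$, giving $\mysum{U-i}{x}{y}$.

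For part (iii) I would add the two sums and use that $2\img{\gamma}=0$ by Lemma \ref{Lm:PairRadImg}(ii). Each summand $g^{-k}\gamma(x,y)$ therefore has order dividing $2$, so an index $k\in U\cap V$ contributes $2g^{-k}\gamma(x,y)=0$ and cancels, while an index lying in exactly one of $U,V$ survives. The surviving index set is precisely the symmetric difference $U\oplus V=(U\cup V)\setminus(U\cap V)$, whence $\mysum{U}{x}{y}+\mysum{V}{x}{y}=\mysum{U\oplus V}{x}{y}$.

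None of the three parts presents a serious obstacle: (i) and (iii) are immediate from \eqref{Eq:iPairProp} and from $2\img{\gamma}=0$ respectively, and the one place that requires care---justifying the termwise action of $g^i$ in (ii)---is handled by the containment $\img{\gamma}\subseteq\rad{\gamma}$ together with the additivity of powers of $g$ on $\rad{\gamma}$ from Lemma \ref{Lm:PairRadImg}. The finiteness of $U$ and $V$ is what lets all the re-indexing and the order-$2$ cancellation go through without convergence issues.
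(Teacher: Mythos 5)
Your proposal is correct and follows essentially the same route as the paper: part (i) by termwise application of \eqref{Eq:iPairProp} and re-indexing, part (ii) by distributing $g^i$ over the sum using that every summand lies in $\img{\gamma}\subseteq\rad{\gamma}$ and that $g^i$ restricts to an automorphism of $\rad{\gamma}$ (Lemma \ref{Lm:PairRadImg}), and part (iii) from $2\img{\gamma}=0$. Your explicit remark about the finiteness of $U$ and $V$ is a reasonable clarification that the paper leaves implicit.
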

\begin{proof}
For (i), we can use by \eqref{Eq:iPairProp} to get
\begin{displaymath}
    \mysum U{g^ix}{g^jy} = \lcsum{k\in U}g^{-k}\gamma(g^ix,g^jy) = \lcsum{k\in U}g^{-k-i-j}\gamma(x,y)
   	= \lcsum{k\in U+i+j}g^{-k}\gamma(x,y) = \mysum{U+i+j}xy.
\end{displaymath}
By Lemma \ref{Lm:PairRadImg}, $g$ restricts to an automorphism of $\rad{\gamma}$ and $\img{\gamma}\subseteq\rad{\gamma}$. Hence
\begin{displaymath}
	g^i\mysum Uxy = g^i\sum_{k\in U}g^{-k}\gamma(x,y) = \sum_{k\in U}g^ig^{-k}\gamma(x,y) = \sum_{k\in U}g^{-(k-i)}\gamma(x,y) = \mysum{U-i}xy,
\end{displaymath}
proving (ii). Part (iii) follows immediately from $2\img{\gamma}=0$ of Lemma \ref{Lm:PairRadImg}.
\end{proof}

For a construction pair $(g,\gamma)$ on $(X,+)$, let $r(g,\gamma)$ be the least positive integer $r$ such that
\begin{equation}\label{Eq:r}
	\lcsum{0\le k<r}g^k(x)\in\rad{\gamma}
\end{equation}
for every $x\in X$, if such $r$ exists. Otherwise set $r(g,\gamma)=\infty$.

\begin{lemma}\label{Lm:r1}
Let $(g,\gamma)$ be a construction pair on $(X,+)$. The following conditions are equivalent for an integer $n$ and $x\in X$:
\begin{enumerate}
\item[(i)] $\sum_{0\le k<|n|}g^k(x)\in\rad{\gamma}$,
\item[(ii)] $\mysum{I(0,n)}xy=0$ for all $y\in X$,
\item[(iii)] $\mysum{I(i,i+n)}xy=0$ for all $y\in X$ and $i\in\mathbb Z$.
\end{enumerate}
\end{lemma}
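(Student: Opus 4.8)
The plan is to prove the two equivalences (i) $\iff$ (ii) and (ii) $\iff$ (iii) separately; the second is essentially immediate, and the first requires only a small amount of care with the sign of $n$.

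For (i) $\iff$ (ii), the starting point is Lemma \ref{Lm:iPairProp}, which lets us move a power of $g$ out of the coefficient and into the first argument of $\gamma$: since $g^{-k}\gamma(x,y)=\gamma(g^kx,y)$, biadditivity of $\gamma$ in its first slot gives
\begin{displaymath}
    \mysum{I(0,n)}xy = \sum_{k\in I(0,n)}\gamma(g^kx,y) = \gamma\Big(\sum_{k\in I(0,n)}g^kx,\;y\Big).
\end{displaymath}
Writing $s=\sum_{k\in I(0,n)}g^kx$, condition (ii) says exactly that $\gamma(s,y)=0$ for every $y$, i.e.\ that $s\in\rad{\gamma}$. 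The task thus reduces to matching $s\in\rad{\gamma}$ with condition (i), namely $\sum_{0\le k<|n|}g^k(x)\in\rad{\gamma}$. When $n\ge 0$ this is instantaneous, since $I(0,n)=\{0,\dots,n-1\}$ and $|n|=n$, so $s$ is literally $\sum_{0\le k<|n|}g^k(x)$.

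The one point needing attention is $n<0$, and this is where I expect the only real (if minor) obstacle. Here $I(0,n)=\{n,\dots,-1\}$ runs over negative indices, so factoring out the smallest power yields $s=g^{n}\sum_{0\le k<|n|}g^k(x)$. By Lemma \ref{Lm:PairRadImg}(iii) the map $g^{n}$ permutes $\rad{\gamma}$, hence $s\in\rad{\gamma}$ if and only if $\sum_{0\le k<|n|}g^k(x)\in\rad{\gamma}$. This reconciles the index set $I(0,n)$ with the absolute-value sum appearing in (i), and completes (i) $\iff$ (ii); the mismatch is resolved precisely by the stability of the radical under powers of $g$.

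Finally, for (ii) $\iff$ (iii), observe that (iii) specialized to $i=0$ is exactly (ii), so only (ii) $\implies$ (iii) requires argument. By Lemma \ref{Lm:Interval}(ii) we have $I(i,i+n)=I(0,n)+i$, and by Lemma \ref{Lm:MySum}(ii) translating the index set by $i$ amounts to applying a power of $g$, namely $\mysum{I(0,n)+i}xy=g^{-i}\mysum{I(0,n)}xy$. Since $g(0)=0$ by Lemma \ref{Lm:PairRadImg}(i), assuming (ii) gives $\mysum{I(i,i+n)}xy=g^{-i}(0)=0$ for every $i$ and every $y$, which is (iii). Chaining the two equivalences yields the lemma.
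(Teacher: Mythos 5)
Your argument follows the same route as the paper's: (ii) $\Leftrightarrow$ (iii) via Lemma \ref{Lm:MySum}(ii) and $g(0)=0$, and (i) $\Leftrightarrow$ (ii) by rewriting $\mysum{I(0,n)}xy$ as $\gamma\bigl(\sum_{k\in I(0,n)}g^k(x),y\bigr)$ using Lemma \ref{Lm:iPairProp} and biadditivity. The one place where you diverge from the paper is the case $n<0$, and that is also the one step you assert without justification: the identity $\sum_{k\in I(0,n)}g^k(x)=g^{n}\bigl(\sum_{0\le k<|n|}g^k(x)\bigr)$ amounts to distributing $g^{n}$ over the sum $x+g(x)+\cdots+g^{|n|-1}(x)$, and $g^{n}$ is \emph{not} an endomorphism of $(X,+)$: by Corollary \ref{Cr:iPairFormula}, $g^{-i}(u+v)$ differs from $g^{-i}(u)+g^{-i}(v)$ by the correction term $\mysum{I(-2i,i)}uv$, which is nonzero in general. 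In a construction whose whole point is that $g$ deviates from an automorphism, ``factoring out the smallest power'' cannot be passed off as routine algebra.

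The step is nevertheless correct, for a reason specific to this particular sum: every correction term involves $\gamma(g^a(x),g^b(x))=g^{-a-b}\gamma(x,x)=0$ by Lemma \ref{Lm:iPairProp} and the alternating property, so $g^{n}$ does distribute over sums of the form $\sum_k g^{a_k}(x)$; supplying that observation closes the gap. The paper avoids the issue altogether by performing the $n\mapsto -n$ reduction on the level of the sums $\mysum{I(0,n)}xy$ themselves: their terms lie in $\img{\gamma}\subseteq\rad{\gamma}$, where powers of $g$ act as injective homomorphisms fixing $0$ (Lemma \ref{Lm:PairRadImg}), so the identity $g^{n}\mysum{I(0,n)}xy=\mysum{I(0,-n)}xy$ of Lemma \ref{Lm:MySum}(ii) shows that condition (ii) is invariant under $n\mapsto -n$ (as condition (i) obviously is), and one may then assume $n>0$ throughout.
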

\begin{proof}
If $n=0$ then all three sums vacuously evaluate to $0$. Suppose that $n\ne 0$. We have $g^{-i}\mysum{I(0,n)}xy = \mysum{I(0,n)+i}xy = \mysum{I(i,i+n)}xy$ by Lemma \ref{Lm:MySum}. Since $g(0)=0$, the conditions (ii) and (iii) are equivalent. Let us establish the equivalence of (i) and (ii). Note that condition (i) is unchanged when $n$ is replaced by $-n$. Since $g^n\mysum{I(0,n)}xy = \mysum{I(-n,0)}xy = \mysum{I(0,-n)}xy$ and $g(0)=0$, condition (ii) is also unchanged when $n$ is replaced by $-n$. We can thus assume that $n>0$. Now, $\mysum{I(0,n)}xy = \sum_{0\le k<n}g^{-k}\gamma(x,y) = \sum_{0\le k<n}\gamma(g^kx,y) = \gamma(\sum_{0\le k<n}g^kx,y)$ by Lemma \ref{Lm:iPairProp} and biadditivity of $\gamma$. Hence (i) and (ii) are equivalent.
\end{proof}

\begin{lemma}\label{Lm:r2}
Let $(g,\gamma)$ be a construction pair on $(X,+)$ and let $r=r(g,\gamma)$. The following conditions are equivalent for an integer $n$:
\begin{enumerate}
\item[(i)] $n=0$ or $r$ divides $n$,
\item[(ii)] $\sum_{0\le k<|n|}g^k(x)\in\rad{\gamma}$ for all $x\in X$,
\item[(iii)] $\mysum{I(0,n)}xy=0$ for all $x,y\in X$,
\item[(iv)] $\mysum{I(i,i+n)}xy=0$ for all $x,y\in X$ and $i\in\mathbb Z$.
\end{enumerate}
\end{lemma}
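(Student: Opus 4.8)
The plan is to establish the cycle of implications (i) $\Rightarrow$ (ii) $\Rightarrow$ (iii) $\Rightarrow$ (iv) $\Rightarrow$ (ii), together with the reverse (ii) $\Rightarrow$ (i), leaning heavily on the definition of $r=r(g,\gamma)$ and on Lemma \ref{Lm:r1}, which already gives the per-element equivalence of the analogous statements. The key observation is that conditions (ii), (iii), (iv) here are exactly the ``for all $x$'' strengthenings of conditions (i), (ii), (iii) of Lemma \ref{Lm:r1}; so once I quantify over $x$, the equivalences (ii) $\Leftrightarrow$ (iii) $\Leftrightarrow$ (iv) are immediate from Lemma \ref{Lm:r1} applied to each fixed $x$. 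That disposes of three of the four conditions essentially for free.

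The real content is the equivalence of (i) with (ii). First I would handle (ii) $\Rightarrow$ (i): if (ii) holds for $n$, then by definition $r$ is the \emph{least} positive integer with the defining property \eqref{Eq:r}, and I must show $n=0$ or $r\mid n$. The standard argument is a minimality-plus-division step. Write $n = qr + s$ with $0\le s<r$; I would use a telescoping/additivity property of the partial sums $\sum_{0\le k<n}g^k(x)$ modulo $\rad{\gamma}$ to show that the defining condition for $n$ and the condition for $r$ together force the condition to hold for the remainder $s$ as well. Since $r$ was least positive, this forces $s=0$, i.e. $r\mid n$ (or $n=0$ outright). The conversion between the $n$-condition and the $s$-condition is cleanest via form (iii): by Lemma \ref{Lm:MySum}(iii) and Lemma \ref{Lm:Interval}(iii), sums over intervals split as symmetric differences, so $\mysum{I(0,n)}xy$ decomposes into a sum of $q$ blocks each congruent (after translating by a power of $g$, using Lemma \ref{Lm:MySum}(ii) and the $g$-invariance of $\rad{\gamma}$ from Lemma \ref{Lm:PairRadImg}) to the $r$-block, which vanishes, leaving exactly the $s$-block.

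For (i) $\Rightarrow$ (ii): if $n=0$ this is vacuous, and if $r\mid n$ I run the same block decomposition in the forward direction. Writing $n=qr$, I split $I(0,n)$ into $q$ translated copies of $I(0,r)$-type blocks using Lemma \ref{Lm:Interval} and Lemma \ref{Lm:MySum}; each block evaluates to $0$ because $r$ satisfies \eqref{Eq:r} and hence, by Lemma \ref{Lm:r1}, $\mysum{I(0,r)}xy=0$ for all $x,y$; the $g$-translations preserve this vanishing since $g$ fixes $\rad{\gamma}$ setwise and $\img{\gamma}\subseteq\rad{\gamma}$. Summing the blocks gives (iii), which is (ii) after Lemma \ref{Lm:r1}.

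The main obstacle I anticipate is bookkeeping the interval arithmetic cleanly: I must make sure the decomposition $I(0,qr+s) = \bigoplus I(\cdot,\cdot)$ into $q$ full period-blocks plus a remainder block is stated correctly for both signs of $n$ (recall $I(i,j)=I(j,i)$, so the $n<0$ case is symmetric and reduces to $n>0$ after noting, as in Lemma \ref{Lm:r1}, that condition (i) is invariant under $n\mapsto -n$). The substantive — but routine given the prior lemmas — point is that translating a vanishing interval-sum by a power of $g$ keeps it zero, which is exactly Lemma \ref{Lm:MySum}(ii) combined with Lemma \ref{Lm:PairRadImg}; once that is invoked, the period-block argument is purely combinatorial and the minimality of $r$ closes the remainder to zero.
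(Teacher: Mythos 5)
Your proposal is correct and follows essentially the same route as the paper: the equivalence of (ii), (iii), (iv) is obtained by quantifying Lemma \ref{Lm:r1} over $x$, and the equivalence with (i) is proved by the division-algorithm block decomposition of $I(0,n)$ into $g$-translated copies of $I(0,r)$ plus a remainder block, with minimality of $r$ killing the remainder. The only cosmetic omission is the explicit remark that (ii) with $n\ne 0$ already forces $r\le|n|<\infty$ before one may write $n=qr+s$, which is implicit in your setup.
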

\begin{proof}
If $n=0$ then all three conditions are clearly equivalent, so suppose that $n\ne 0$. The equivalence of (ii), (iii) and (iv) follows from Lemma \ref{Lm:r1}. Let us prove the equivalence of (i) and (iii). As in the proof of Lemma \ref{Lm:r1}, we can suppose without loss of generality that $n>0$. Note that if $r$ is finite then Lemma \ref{Lm:r1} implies $\mysum{I(i,i+r)}xy=0$ for every $i$.

Suppose that (i) holds. Then $r<\infty$ and $n=qr$ for some positive integer $q$. Since $I(0,n) = I(0,r)\cup I(r,2r)\cup\cdots\cup I((q-1)r,n)$, we have $\mysum{I(0,n)}xy= \mysum{I(0,r)}xy+\mysum{I(r,2r)}xy + \cdots + \mysum{I((q-1)r,n)}xy = 0$. Conversely, suppose that (ii) holds, which again implies that $r<\infty$. Suppose for a contradiction that $n=qr+t$ for some $0<t<r$. Then $\mysum{I(0,t)}xy = \mysum{I(0,n)}xy - \mysum{I(t,n)}xy = \mysum{I(0,n)}xy - g^{-t}\mysum{I(0,qr)}xy = 0-g^{-t}(0)=0$ for all $x,y\in X$. Then $\sum_{0\le k<t}g^kx\in\rad{\gamma}$ for all $x\in X$ by Lemma \ref{Lm:r1}, a contradiction with the minimality of $r$.
\end{proof}

\begin{lemma}\label{Lm:WellDefined}
Let $(X,+)$ be an abelian group, $(g,\gamma)$ a construction pair on $(X,+)$ and $C=\langle b\rangle$ a cyclic group. Then the following conditions are equivalent:
\begin{enumerate}
\item[(i)] The formula
\begin{equation}\label{Eq:MultGeneral}
    (b^i,x)\cdot(b^j,y) = \Big(b^{i+j},g^{-j}(x)+y+\lcsum{k\in I(i+j,-j)}g^{-k}\gamma(x,y)\Big)
\end{equation}
correctly defines a multiplication on $C\times X$.
\item[(ii)] Either $C$ is infinite, or $C$ is finite, $|g|$ divides $|C|$ and $\sum_{0\le k<|C|}g^k(x)\in\rad{\gamma}$ for every $x\in X$.
\item[(iii)] Either $C$ is infinite, or $C$ is finite, $|g|$ divides $|C|$ and $r(g,\gamma)$ divides $|C|$.
\end{enumerate}
\end{lemma}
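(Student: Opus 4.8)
The plan is to split the statement into the two equivalences (ii)$\Leftrightarrow$(iii) and (i)$\Leftrightarrow$(ii), treating the infinite and finite cases separately in each. The equivalence (ii)$\Leftrightarrow$(iii) is essentially a restatement of Lemma \ref{Lm:r2}: when $C$ is infinite both conditions hold by their ``infinite'' disjuncts, and when $C$ is finite both share the requirement $|g|\mid|C|$, while the remaining clauses---$\sum_{0\le k<|C|}g^k(x)\in\rad{\gamma}$ for all $x$ in (ii) and $r(g,\gamma)\mid|C|$ in (iii)---are respectively conditions (ii) and (i) of Lemma \ref{Lm:r2} with $n=|C|\ne0$, hence equivalent to each other. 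So the whole content lies in (i)$\Leftrightarrow$(ii).

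For (i)$\Leftrightarrow$(ii) the infinite case is immediate: distinct powers of $b$ have distinct exponents, so the right-hand side of \eqref{Eq:MultGeneral} depends only on the pair $(b^i,b^j)$ and the formula automatically defines a binary operation on $C\times X$; thus (i) holds, and (ii) holds by its ``infinite'' disjunct. Assume then that $C$ is finite and put $n=|C|$, so that $b^i=b^{i'}$ iff $i\equiv i'\pmod n$. The first coordinate $b^{i+j}$ of \eqref{Eq:MultGeneral} is visibly unaffected by $i\mapsto i+n$ and by $j\mapsto j+n$, so (i) is equivalent to the invariance of the second coordinate
\[
    P(i,j,x,y)=g^{-j}(x)+y+\mysum{I(i+j,-j)}xy
\]
under these two substitutions (for fixed $j$, resp.\ fixed $i$), since any two representatives of a pair $(b^i,b^j)$ differ by such shifts.

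Under $i\mapsto i+n$ only the interval-sum term of $P$ moves, so after cancelling $g^{-j}(x)+y$ the requirement is $\mysum{I(i+j+n,-j)}xy=\mysum{I(i+j,-j)}xy$ for all $i,j,x,y$. Writing $p=i+j$ and $q=-j$ (which together range over all of $\mathbb Z^2$) and using Lemma \ref{Lm:MySum}(iii) with the exponent-two property $2\img{\gamma}=0$, this becomes $\mysum{I(p+n,q)\oplus I(p,q)}xy=0$; by Lemma \ref{Lm:Interval} the symmetric difference equals $I(p,p+n)$, so invariance under $i\mapsto i+n$ is equivalent to $\mysum{I(m,m+n)}xy=0$ for all $m,x,y$, which by Lemma \ref{Lm:r2} is exactly the radical condition $\sum_{0\le k<n}g^k(x)\in\rad{\gamma}$.

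The step under $j\mapsto j+n$ is where \emph{both} $j$-dependent ingredients of $P$ change, and this is the main obstacle. The trick I would use is to evaluate at $y=0$: since $\gamma(\cdot,0)=0$ the interval sum vanishes and the invariance collapses to $g^{-j-n}(x)=g^{-j}(x)$ for all $j,x$, that is $g^n=\id{X}$, i.e.\ $|g|\mid n$. Granting $|g|\mid n$, the $g^{-j}(x)$ terms now agree on both sides, and the residual identity $\mysum{I(i+j+n,-j-n)}xy=\mysum{I(i+j,-j)}xy$ decomposes, via Lemma \ref{Lm:Interval} and Lemma \ref{Lm:MySum}(iii), into a sum of terms of the shape $\mysum{I(m,m+n)}xy$, each of which already vanishes by the radical condition extracted from the $i$-shift. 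Hence, in the finite case, (i) holds iff $|g|\mid n$ \emph{and} the radical condition holds, which is precisely (ii). The delicate point throughout is that a single well-definedness equation must be made to yield two independent constraints---$|g|\mid n$ from the automorphism part $g^{-j}(x)$ and the radical/divisibility condition from the cocycle-like interval sum---and that the various interval-sum invariances all reduce, through the symmetric-difference identity of Lemma \ref{Lm:Interval} and $2\img{\gamma}=0$, to the single hypothesis of Lemma \ref{Lm:r2}.
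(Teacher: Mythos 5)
Your proposal is correct and follows essentially the same route as the paper: both directions reduce well-definedness to invariance of the second coordinate under the shifts $i\mapsto i+n$ and $j\mapsto j+n$, extract $|g|\mid n$ by specializing to $y=0$ and the radical condition from the interval sum, and then use Lemma \ref{Lm:Interval} together with Lemma \ref{Lm:r2} (and $2\img{\gamma}=0$) for the converse, with (ii)$\Leftrightarrow$(iii) delegated to Lemma \ref{Lm:r2} exactly as in the paper.
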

\begin{proof}
The equivalence of (ii) and (iii) follows from Lemma \ref{Lm:r2}. Let us prove the equivalence of (i) and (ii).

Suppose that the multiplication \eqref{Eq:MultGeneral} is well defined. If $C$ is infinite, we are done, so suppose that $|C|=n$ is finite. Recall that $\gamma(x,0)=0$ and $g(0)=0$ by Lemma \ref{Lm:PairRadImg}. Then for every $x\in X$ the product $(b^0,x)\cdot (b^0,0)$ evaluates to $(b^0,x)$ and is equal to $(b^n,x)\cdot (b^0,0) = (b^0,g^{-n}(x))$, so $g^n(x)=x$ and $|g|$ divides $n$. Also, for every $x,y\in X$ the product $(b^0,x)\cdot (b^0,y)=(b^0,x+y)$ is equal to $(b^n,x)\cdot (b^0,y) = (b^0,x+y+\mysum{I(n,0)}xy)$, so $\mysum{I(0,n)}xy=0$ and hence $\sum_{0\le k<n}g^k(x)\in\rad{\gamma}$ by Lemma \ref{Lm:r2}.

Conversely, if $C$ is infinite then certainly the multiplication is well-defined. Suppose that $|C|=n$, $|g|$ divides $n$ and $\sum_{0\le k<n}g^k(x)\in\rad{\gamma}$ for every $x\in X$. All we need to check is that the formula \eqref{Eq:MultGeneral} gives the same result when $i$ is replaced by $i+n$ and when $j$ is replaced by $j+n$. Since we are assuming that $|g|$ divides $n$, we only need to check
\begin{displaymath}
    \mysum{I(i+n+j,-j)}xy\ =\ \mysum{I(i+j,-j)}xy\ =\ \mysum{I(i+j+n,-j-n)}xy. 
\end{displaymath}
Recall that signs play no role here since $2\img{\gamma}=0$. For the first equality, note that $I(i+n+j,-j)\oplus I(i+j,-j) = I(i+j+n,i+j)$ by Lemma \ref{Lm:Interval}, so we need to check that $\mysum{I(i+j+n,i+j)}xy = 0$. This is true by Lemma \ref{Lm:r2} since $\sum_{0\le k<n}g^k(x)\in\rad{\gamma}$ for every $x\in X$. For the second equality, it now suffices to show $\mysum{I(i+n+j,-j)}xy  = \mysum{I(i+j+n,-j-n)}xy$, which is the same as $\mysum{I(-j,-j-n)}xy=0$ by Lemma \ref{Lm:Interval}, which again holds by Lemma \ref{Lm:r2}.
\end{proof}

The following result shows (in combination with Lemma \ref{Lm:r2}) that the necessary condition $\sum_{0\le k<|C|}g^k(x)\in\rad{\gamma}$ of Lemma \ref{Lm:WellDefined} is automatically satisfied when $|g|$ divides $|C|$ and $\rad{\gamma}$ contains no elements of order $3$.

\begin{lemma}\label{Lm:r3}
Let $(g,\gamma)$ be a construction pair on $(X,+)$ and let $r=r(g,\gamma)$. If $|g|<\infty$ then $r$ divides $3|g|$. If $|g|<\infty$ and $\rad{\gamma}$ contains no elements of order $3$ then $r$ divides $|g|$.
\end{lemma}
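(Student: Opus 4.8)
The plan is to read off both divisibilities from the closed form \eqref{Eq:iPairDef} specialised to $i=|g|$, combined with the dictionary of Lemma \ref{Lm:r2} between divisibility of $r$ and vanishing of the sums $\mysum{\cdot}{x}{y}$. Throughout I write $m=|g|$, so that $g^m=g^{-m}=\id{X}$.

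First I would prove $r\mid 3m$. Substituting $i=m$ into \eqref{Eq:iPairDef}, the left-hand side $g^{-m}(g^mx+g^my)$ collapses to $x+y$ because $g^{\pm m}=\id{X}$, so the correction term is forced to vanish:
\begin{displaymath}
    \mysum{I(0,3m)}{x}{y}=0\qquad\text{for all }x,y\in X.
\end{displaymath}
By the equivalence of conditions (i) and (iii) in Lemma \ref{Lm:r2} (applied with $n=3m\ne 0$), this says precisely that $r$ is finite and divides $3m=3|g|$.

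Next I would upgrade this to $r\mid m$ under the extra hypothesis. Splitting the index set $I(0,3m)$ into the three consecutive blocks $I(0,m)$, $I(m,2m)$, $I(2m,3m)$ and using Lemma \ref{Lm:Interval}(ii) together with Lemma \ref{Lm:MySum}(ii) and $g^{-m}=g^{-2m}=\id{X}$, each block contributes the same value $\mysum{I(0,m)}{x}{y}$, whence
\begin{displaymath}
    0=\mysum{I(0,3m)}{x}{y}=3\,\mysum{I(0,m)}{x}{y}.
\end{displaymath}
Now set $w=\mysum{I(0,m)}{x}{y}=\sum_{0\le k<m}g^{-k}\gamma(x,y)$. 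Each summand lies in $\img\gamma\subseteq\rad\gamma$ by Lemma \ref{Lm:PairRadImg}, and $\rad\gamma$ is a subgroup, so $w\in\rad\gamma$; moreover $3w=0$, so $w$ has order dividing $3$ in $\rad\gamma$. If $\rad\gamma$ contains no element of order $3$, then $w=0$, that is $\mysum{I(0,m)}{x}{y}=0$ for all $x,y\in X$, and Lemma \ref{Lm:r2} (now with $n=m$) gives $r\mid m=|g|$.

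The argument is essentially immediate once \eqref{Eq:iPairDef} is in hand; the only steps needing care are the bookkeeping that identifies the three blocks of $I(0,3m)$ after the substitution $g^{\pm m}=\id{X}$, and the final deduction, where I must check that $w$ genuinely lies in $\rad\gamma$ (and not merely in $X$) before the no-order-$3$ hypothesis can be invoked to pass from $3w=0$ to $w=0$.
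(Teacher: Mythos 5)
Your proposal is correct and follows essentially the same route as the paper: specialize \eqref{Eq:iPairDef} at $i=|g|$ to force $\mysum{I(0,3m)}{x}{y}=0$, invoke Lemma \ref{Lm:r2} for $r\mid 3m$, then use periodicity of the powers of $g$ to write the sum as $3\mysum{I(0,m)}{x}{y}$ with value in $\rad{\gamma}$ and conclude under the no-order-$3$ hypothesis. The one point you flag as needing care (that the partial sum lies in $\rad{\gamma}$) is exactly the point the paper also cites Lemma \ref{Lm:PairRadImg} for, so nothing is missing.
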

\begin{proof}
Let $m=|g|<\infty$. By \eqref{Eq:iPairDef}, $x+y = g^{-m}(g^mx+g^my) = x+y + \mysum{I(0,3m)}xy$, so $\mysum{I(0,3m)}xy=0$ and $r$ divides $3m$ by Lemma \ref{Lm:r2}. Suppose also that $\rad{\gamma}$ contains no elements of order $3$. Since $g^k = g^{k+m}$ for every $k$, we have $0 = \mysum{I(0,3m)}xy = 3\mysum{I(0,m)}xy$. The sum $\mysum{I(0,m)}xy$ lies in $\rad{\gamma}$ by Lemma \ref{Lm:PairRadImg}. Since $\rad{\gamma}$ contains no elements of order $3$, we deduce $\mysum{I(0,m)}xy=0$ and then Lemma \ref{Lm:r2} implies that $r$ divides $m$.
\end{proof}

Here is the main construction of the paper:

\begin{definition}\label{Df:Ext}
Suppose that  $(X,+)$ is an abelian group, $(g,\gamma)$ is a construction pair on $(X,+)$ and $C=\langle b\rangle$ is a cyclic group. If $C$ is finite, suppose further that $|g|$ divides $|C|$ and $r(g,\gamma)$ divides $|C|$. Then the groupoid $(C\times X,\cdot)$ with multiplication \eqref{Eq:MultGeneral} will be denoted by $C\ltimes_{(g,\gamma)}X$ or $\langle b\rangle\ltimes_{(g,\gamma)}X$. Whenever we use this notation, we implicitly assume that all conditions required by this definition are satisfied.
\end{definition}

Note that $C\times 0$ is a subloop of $Q = C\ltimes_{(g,\gamma)}X$. The mapping $(b^i,x)\mapsto b^i$ is a homomorphism from $Q = C\ltimes_{(g,\gamma)}X$ onto $C$ with kernel $1\times X$. Hence $1\times X$ is a normal subloop of $Q$ such that $Q/(1\times X)$ is isomorphic to $C$. Obviously, $(C\times 0)\cap(1\times X)=1$, so $Q$ is split.

\begin{theorem}\label{Th:AbelianByCyclic}
Let $Q=C\ltimes_{(g,\gamma)}X$ be the groupoid from Definition $\ref{Df:Ext}$. Then $Q$ is a Moufang loop with neutral element $(1,0)$ and inverses $(b^i,x)^{-1} = (b^{-i},-g^i(x))$.
\end{theorem}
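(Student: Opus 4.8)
The plan is to verify in turn that the groupoid $Q=C\ltimes_{(g,\gamma)}X$ has a two-sided neutral element, that all its translations are bijections (so that it is a loop), that the stated inverse formula is correct, and finally that one of the Moufang identities holds. Well-definedness of \eqref{Eq:MultGeneral} is already guaranteed by Definition \ref{Df:Ext} together with Lemma \ref{Lm:WellDefined}, so throughout I may compute freely with the formula. For the neutral element, substituting $(b^0,0)$ on either side of \eqref{Eq:MultGeneral} and using $g(0)=0$ from Lemma \ref{Lm:PairRadImg} together with $\gamma(x,0)=0=\gamma(0,y)$ collapses the correction sum to $0$, giving $(b^0,0)(b^j,y)=(b^j,y)$ and $(b^i,x)(b^0,0)=(b^i,x)$. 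For inverses, a direct evaluation of $(b^i,x)(b^{-i},-g^i(x))$ and $(b^{-i},-g^i(x))(b^i,x)$ shows the first coordinate is $b^0$ and the correction sum vanishes, since its $\gamma$-entries have the form $\gamma(x,\pm g^i(x))=\pm g^{-i}\gamma(x,x)=0$ by Lemma \ref{Lm:iPairProp} and the alternating property, while the linear parts cancel using $g(-z)=-g(z)$.

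Next I would show that every left and right translation of $Q$ is a bijection. Fixing $(b^i,p)$ and looking at $L_{(b^i,p)}$, the first coordinate of a product determines the exponent $j$ (mod $|C|$), and for fixed $j$ the second coordinate has the form $q\mapsto c_0+q+S(q)$, where $c_0\in X$ is constant and $S(q)=\mysum{I(i+j,-j)}{p}{q}=\sum_k g^{-k}\gamma(p,q)$ is additive in $q$ with $\img{S}\subseteq\rad{\gamma}$. The crucial observation is that $S^2=0$: indeed $S(q)\in\rad{\gamma}$, so $\gamma(p,S(q))=0$ by symmetry of $\gamma$, whence $S(S(q))=0$. Therefore $\id{X}+S$ is a bijection with inverse $\id{X}-S$, and $L_{(b^i,p)}$ is a bijection. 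For $R_{(b^j,q)}$ the second coordinate, as a function of $p$, is $g^{-j}(p)+q+\sum_k g^{-k}\gamma(p,q)$; composing with the bijection $g^j$ turns it into $\id{X}+S'$ plus a constant for an additive $S'$ with $\img{S'}\subseteq\rad{\gamma}$ and $(S')^2=0$, so it too is a bijection. Hence $Q$ is a loop.

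It then remains to verify a single Moufang identity, say \eqref{Eq:M1} in the autotopism form $(L_x,R_x,R_xL_x)\in\atp{Q}$, i.e. $xu\cdot vx=(x\cdot uv)x$ for $x=(b^i,p)$, $u=(b^j,q)$, $v=(b^k,s)$. I would expand both sides with \eqref{Eq:MultGeneral}. The exponents of $b$ match immediately. For the $X$-coordinates the computation organizes into a \emph{linear part} (iterated $g$-powers applied to $p,q,s$) and a \emph{$\gamma$-part} (various interval sums). Two features keep this tractable: first, every correction term already lies in $\rad{\gamma}$, so it is transparent both to any later application of $\gamma$ (cross terms vanish, as $\gamma(w,\cdot)=0$ for $w\in\rad{\gamma}$) and to any power of $g$, which acts as a homomorphism in its presence by Lemma \ref{Lm:PairRadImg}; second, breaking up the remaining $g^{-?}(x+y)$ terms via Corollary \ref{Cr:iPairFormula} and re-indexing the sums via Lemma \ref{Lm:MySum} reduces the $\gamma$-part of each side to a single expression of the shape $\mysum{U}{\cdot}{\cdot}$ for an explicit index set $U$ built from intervals. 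Matching the linear parts is routine, and matching the $\gamma$-parts reduces to identities among the index sets under the symmetric difference $\oplus$, which are exactly the content of Lemma \ref{Lm:Interval}.

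The main obstacle is this last bookkeeping of interval sums. The potential difficulty is twofold: one must track signs, which is harmless since $2\img{\gamma}=0$, and, in the finite case, the fact that intervals may wrap around the period of $C$. The wrap-around is precisely controlled by the well-definedness hypotheses: by Lemma \ref{Lm:r2} any interval sum over a block of length divisible by $r(g,\gamma)$, in particular over a full period, vanishes, which is what allows the two sides to be compared modulo the identifications forced by $b^{|C|}=1$ and $g^{|C|}=\id{X}$. Thus the heart of the proof is an exercise in the algebra of the intervals $I(i,j)$ and the sums $\mysum{U}{x}{y}$ developed in Sections \ref{Sc:CPP} and \ref{Sc:MCP}, with the defining conditions \eqref{Eq:PairDef}--\eqref{Eq:PairProp} entering only through the lemmas already established.
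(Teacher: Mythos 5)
Your proposal is correct and, for the neutral element, the inverse formula, and the Moufang identity, it follows essentially the same path as the paper: direct substitution into \eqref{Eq:MultGeneral}, separation of each coordinate into a ``linear'' part and a $\gamma$-part, and reduction of the $\gamma$-part to symmetric-difference identities among the intervals $I(i,j)$ via Lemmas \ref{Lm:MySum} and \ref{Lm:Interval} together with Corollary \ref{Cr:iPairFormula}. (The paper verifies \eqref{Eq:M2} rather than \eqref{Eq:M1}, which is immaterial, and the interval bookkeeping in fact never requires reduction modulo $|C|$ --- both sides come out as literally identical index sets for fixed integer representatives $i,j,k$ --- so your worry about wrap-around, while harmless, does not actually arise at this stage; it is already absorbed into Lemma \ref{Lm:WellDefined}.)

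Where you genuinely diverge is in establishing that $Q$ is a loop. The paper proves two-sided inverses and then verifies $x^{-1}(xy)=y=(yx)x^{-1}$, invoking the criterion from \cite{PhillipsVojtechovsky} that a groupoid with neutral element and two-sided inverses satisfying these identities is an inverse property loop. You instead prove bijectivity of all translations directly: for a fixed translation the second coordinate is, up to an additive constant and precomposition with a power of $g$, of the form $\id{X}+S$ with $S$ additive, $\img{S}\subseteq\rad{\gamma}$, and hence $S^2=0$ because $\gamma(\cdot,S(q))$ vanishes on $\rad{\gamma}$; so $\id{X}+S$ is invertible with inverse $\id{X}-S$. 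This ``unipotent perturbation'' argument is clean, self-contained, and slightly more elementary than the paper's route, at the cost of not producing the inverse-property identities along the way (which you do not need, since they follow once the Moufang identity is established). The one respect in which your write-up stops short of a complete proof is that the expansion of both sides of \eqref{Eq:M1} is described rather than executed; but the roadmap you give is exactly the computation the paper performs for \eqref{Eq:M2}, and it does go through as you describe.
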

\begin{proof}
We will use Lemmas \ref{Lm:PairRadImg}, \ref{Lm:Interval} and \ref{Lm:MySum} freely. By Lemma \ref{Lm:WellDefined}, the multiplication is well defined.

\textit{Neutral element.} Substituting $(1,0)=(b^0,0)$ for $(b^i,x)$ into \eqref{Eq:MultGeneral} yields $(b^0,0)\cdot(b^j,y) = (b^j,g^{-j}(0)+y+\mysum{I(j,-j)}0y)$. Since $\gamma(0,y)=0=g(0)$, the product reduces to $(b^j,y)$. Similarly, $(b^i,x)\cdot (b^0,0) = (b^i,g^{-0}(x)+0+\mysum{I(i,0)}x0) = (b^i,x)$.

\textit{Two-sided inverses.} We will show that $(b^{-i},-g^i(x))$ is the two-sided inverse of $(b^i,x)$. We have $(b^i,x)\cdot(b^{-i},-g^i(x)) = (1,g^i(x)-g^i(x)+\mysum{I(0,i)}{x}{-g^i(x)})$. Since $\gamma(x,-g^i(x)) = -g^{-i}\gamma(x,x)=-g^{-i}(0)=0$, the product reduces to $(1,0)$. Similarly, $(b^{-i},-g^i(x))\cdot (b^i,x) = (1,g^{-i}(-g^i(x))+x+\mysum{I(0,-i)}{-g^i(x)}{x}) = (1,0)$ because the sum again vanishes and $g^k(-x)=-g^k(x)$.

\textit{Inverse property loop.} It is well known, cf.~\cite{PhillipsVojtechovsky}, that a groupoid with a neutral element and two-sided inverses is an inverse property loop if $x^{-1}(xy)=y=(yx)x^{-1}$ holds for all $x,y$. We will verify the two identities. Let us write $b^ix$ instead of $(b^i,x)$ from now on. We have
\begin{align*}
    b^{-i}(-g^ix)\cdot (b^ix\cdot b^jy)& =
        b^{-i}(-g^ix)\cdot b^{i+j}(g^{-j}x+y+\mysum{I(i+j,-j)}xy)\\
        &=b^j(g^{-i-j}(-g^ix) + g^{-j}x+y+\mysum{I(i+j,-j)}xy + \mysum{I(j,-i-j)}{-g^ix}{g^{-j}x+y}).
\end{align*}
Note that $\gamma(g^ax,g^bx+y) = \gamma(g^ax,g^bx)+\gamma(g^ax,y) = g^{-a-b}\gamma(x,x) +\gamma(g^ax,y) = \gamma(g^ax,y)$ since $\gamma$ is biadditive and alternating. Hence
\begin{displaymath}
    \mysum{I(j,-i-j)}{-g^ix}{g^{-j}x+y} = \mysum{I(j,-i-j)}{-g^ix}{y} = -\mysum{I(j,-i-j)}{g^ix}{y} = -\mysum{I(i+j,-j)}xy.
\end{displaymath}
Also, $g^{-i-j}(-g^ix) = -g^{-i-j}g^i(x) = -g^{-j}x$. Combining, we get $b^{-i}(-g^ix)\cdot (b^ix\cdot b^jy)=b^jy$. Similarly,
\begin{align*}
    (b^ix\cdot b^jy)\cdot b^{-j}(-g^jy)
		&= b^{i+j}(g^{-j}x+y+\mysum{I(i+j,-j)}xy)\cdot b^{-j}(-g^jy)\\
        	&= b^i(g^j(g^{-j}(x)+y+\mysum{I(i+j,-j)}xy)-g^jy+\mysum{I(i,j)}{g^{-j}x+y}{-g^jy}),
\end{align*}
where we took advantage of $\mysum{I(i+j,-j)}xy\in\img{\gamma}\subseteq\rad{\gamma}$. By Corollary \ref{Cr:iPairFormula},
\begin{align*}
    	g^j(g^{-j}(x)+y+\mysum{I(i+j,-j)}xy)
    	&=g^j(g^{-j}x)+g^j(y+\mysum{I(i+j,-j)}xy) + \mysum{I(2j,-j)}{g^{-j}x}{y}\\
	&=x+g^jy+ g^j\mysum{I(i+j,-j)}xy + \mysum{I(2j,-j)}{g^{-j}x}{y}\\
	&=x+g^jy+\mysum{I(i,-2j)}xy+\mysum{I(j,-2j)}xy = x+g^jy+\mysum{I(i,j)}xy.
\end{align*}
Also,
\begin{displaymath}
    \mysum{I(i,j)}{g^{-j}x+y}{-g^jy} = -\mysum{I(i,j)}{g^{-j}x}{g^jy} = -\mysum{I(i,j)}xy.
\end{displaymath}
Combining, we get $(b^ix\cdot b^jy)\cdot b^{-j}(-g^jy) = b^ix$.

\textit{Moufang identity.} It remains to show that $(C\times X,\cdot)$ satisfies one of the Moufang identities. We will establish \eqref{Eq:M2}. First we find a formula for $b^ix\cdot (b^jy\cdot b^ix)$. We have
\begin{multline*}
    b^ix\cdot (b^jy\cdot b^ix) = b^ix\cdot b^{i+j}(g^{-i}y+x+\mysum{I(i+j,-i)}xy\\
    =b^{2i+j}(g^{-i-j}x+g^{-i}y+x+\mysum{I(i+j,-i)}xy+\mysum{I(2i+j,-i-j)}{x}{g^{-i}y+x}).
\end{multline*}
Since
\begin{displaymath}
    \mysum{I(2i+j,-i-j)}{x}{g^{-i}y+x}\ =\ \mysum{I(2i+j,-i-j)}{x}{g^{-i}y}\ =\ \mysum{I(i+j,-2i-j)}xy 
\end{displaymath}
and $I(i+j,-2i-j)\oplus I(i+j,-i) = I(-i,-2i-j)$, we get
\begin{equation}\label{Eq:Flex}
    b^ix\cdot (b^jy\cdot b^ix) = b^{2i+j}(g^{-i-j}x+x+g^{-i}y+\mysum{I(-i,-2i-j)}xy.
\end{equation}
Using \eqref{Eq:Flex} in the second equality below, we have
\begin{multline*}
    b^ix\cdot ((b^jy\cdot b^kz)\cdot b^ix) =b^ix\cdot(b^{j+k}(g^{-k}y+z+\mysum{I(j+k,-k)}yz)\cdot b^ix)\\
    =b^{2i+j+k}(g^{-i-j-k}x+x+g^{-i}(g^{-k}y+z)+g^{-i}\mysum{I(j+k,-k)}yz + \mysum{I(-i,-2i-j-k)}{x}{g^{-k}y+z}).
\end{multline*}
We will next rewrite certain three summands of the last equation, using Corollary \ref{Cr:iPairFormula} for the first summand. We get:
\begin{align*} 
    &g^{-i}(g^{-k}y+z) = g^{-i-k}y+g^{-i}z+\mysum{I(-2i,i)}{g^{-k}y}{z} = g^{-i-k}y+g^{-i}z+\mysum{I(-2i-k,i-k)}yz,\\
    &g^{-i}\mysum{I(j+k,-k)}yz\ =\  \mysum{I(i+j+k,i-k)}yz,\\
    &\mysum{I(-i,-2i-j-k)}{x}{g^{-k}y+z}\ \ =\ \ \mysum{I(-i-k,-2i-j-2k)}xy\  \ +\ \ \mysum{I(-i,-2i-j-k)}xz.
\end{align*}
Combining the two sums for $(y,z)$, we see that $b^ix\cdot ((b^jy\cdot b^kz)\cdot b^ix) = b^{2i+j+k}u$, where $u$ is equal to
\begin{displaymath}
    g^{-i-j-k}x+x+g^{-i-k}y+g^{-i}z+\ \ \mysum{I(-i-k,-2i-j-2k)}xy\ \ +\ \ \mysum{I(-i,-2i-j-k)}xz\  \ +\ \ \mysum{I(i+j+k,-2i-k)}yz.
\end{displaymath}
On the other hand,
\begin{displaymath}
    (b^ix\cdot b^jy)\cdot (b^kz\cdot b^ix)
    =b^{i+j}(g^{-j}x+y+\mysum{I(i+j,-j)}xy)\cdot b^{i+k}(g^{-i}z+x+\mysum{I(i+k,-i)}xz)
\end{displaymath}
is equal to $b^{2i+j+k}v$, where $v$ is
\begin{displaymath} 
    g^{-i-k}(g^{-j}x+y)+g^{-i-k}\mysum{I(i+j,-j)}xy + g^{-i}z+x+\ \mysum{I(i+k,-i)}xz \ +\  \mysum{I(2i+j+k,-i-k)}{g^{-j}x+y}{g^{-i}z+x}.
\end{displaymath}
Focusing on certain three summands again, we get
\begin{align*} 
    	&g^{-i-k}(g^{-j}x+y) = g^{-i-j-k}x+g^{-i-k}y+\mysum{I(-2i-2k,i+k)}{g^{-j}x}{y} = g^{-i-j-k}x+g^{-i-k}y+\mysum{I(-2i-j-2k,i-j+k)}xy,\\
	&g^{-i-k}\mysum{I(i+j,-j)}xy = \mysum{I(2i+j+k,i-j+k)}xy,\\
   	&\ \mysum{I(2i+j+k,-i-k)}{g^{-j}x+y}{g^{-i}z+x} =\ \ \mysum{I(2i+j+k,-i-k)}xy\ \ +\ \ \mysum{I(i+k,-2i-j-k)}xz\ \ +\ \ \mysum{I(i+j+k,-2i-k)}yz.
\end{align*}
The overall contribution of $(x,y)$ to $(b^ix\cdot b^jy)\cdot (b^kz\cdot b^ix)$ is therefore
\begin{multline*}
	I(-2i-j-2k,i-j+k) \oplus I(2i+j+k,i-j+k)\oplus I(2i+j+k,-i-k) \\
	= I(-2i-j-2k,2i+j+k)\oplus I(2i+j+k,-i-k) = I(-i-k,-2i-j-2k),
\end{multline*}
while the overall contribution of $(x,z)$ is
\begin{displaymath}
    I(i+k,-i)\oplus I(i+k,-2i-j-k) = I(-i,-2i-j-k).
\end{displaymath}
Combining, we see that $v=u$ and \eqref{Eq:M2} holds.
\end{proof}

\section{Properties of the Moufang loops obtained from construction pairs}\label{Sc:PropertiesCP}

In this section we establish some structural properties of the Moufang loops $C\ltimes_{(g,\gamma)}X$ of Theorem \ref{Th:AbelianByCyclic}. We will freely use Lemmas \ref{Lm:PairRadImg}, \ref{Lm:Interval} and \ref{Lm:MySum}.

\begin{lemma}\label{Lm:Commutator}
Let $Q=\langle b\rangle\ltimes_{(g,\gamma)}X$. For all $i,j\in\mathbb Z$ and $x,y\in X$ we have
\begin{equation}\label{Eq:Commutator}
	[(b^i,x),(b^j,y)] = \Big(1,-x+g^{-j}x+y-g^{-i}y+\mysum{I(-i,-j)}xy + \mysum{I(-i-j,i+j)}xy\Big).
\end{equation}
\end{lemma}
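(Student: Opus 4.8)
The plan is to exploit that $Q=\langle b\rangle\ltimes_{(g,\gamma)}X$ is Moufang, hence an inverse property loop, so that the commutator $[u,v]$ of $u=(b^i,x)$ and $v=(b^j,y)$, defined by $(vu)[u,v]=uv$, equals $(vu)^{-1}(uv)$. First I would compute both products with \eqref{Eq:MultGeneral}, writing $uv=(b^{i+j},R)$ and $vu=(b^{i+j},P)$ where $R=g^{-j}x+y+\mysum{I(i+j,-j)}{x}{y}$ and $P=g^{-i}y+x+\mysum{I(i+j,-i)}{x}{y}$ (the second using symmetry of $\gamma$). Since both share the $C$-component $b^{i+j}$, the commutator lands in $1\times X$, consistent with the claim. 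By Theorem \ref{Th:AbelianByCyclic} we have $(vu)^{-1}=(b^{-(i+j)},-g^{i+j}P)$, and one further application of \eqref{Eq:MultGeneral} gives $[u,v]=(1,z)$ with
\[
z \;=\; -P+R+\mysum{I(0,-(i+j))}{-g^{i+j}P}{R},
\]
where $g^{-(i+j)}(-g^{i+j}P)=-P$ comes from Lemma \ref{Lm:PairRadImg}(i).

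The linear part is immediate: $-P+R$ produces $-x+g^{-j}x+y-g^{-i}y$ together with the two radical sums $\mysum{I(i+j,-j)}{x}{y}$ and $\mysum{I(i+j,-i)}{x}{y}$, where signs are irrelevant since $2\img{\gamma}=0$. Everything then hinges on the cross term, and evaluating $\gamma(-g^{i+j}P,R)$ is the real work. Here I would use \eqref{Eq:iPairProp} to extract the power, $\gamma(g^{i+j}P,R)=g^{-(i+j)}\gamma(P,R)$, treating $P$ as a single argument so its internal structure is irrelevant at this step. Expanding $\gamma(P,R)$ by biadditivity, the summands of $P$ and $R$ lying in $\rad{\gamma}$ are annihilated by \eqref{Eq:PairRad}, while $\gamma(g^{-i}y,y)=g^{i}\gamma(y,y)=0$ and $\gamma(x,g^{-j}x)=0$ vanish by alternation; the survivors are $\gamma(g^{-i}y,g^{-j}x)=g^{i+j}\gamma(x,y)$ (by \eqref{Eq:iPairProp} and symmetry) and $\gamma(x,y)$. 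Thus $\gamma(-g^{i+j}P,R)=\gamma(x,y)+g^{-(i+j)}\gamma(x,y)$ up to sign, and summing $g^{-k}$ of this over $k\in I(0,-(i+j))$ splits the cross term into $\mysum{I(0,-(i+j))}{x}{y}+\mysum{I(i+j,0)}{x}{y}$, the second interval arising from the index shift $I(0,-(i+j))+(i+j)=I(i+j,0)$ of Lemma \ref{Lm:Interval}(ii).

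It remains to consolidate the four interval-indexed sums into the two claimed ones, and I expect this bookkeeping to be the only genuinely delicate step. Combining via Lemma \ref{Lm:MySum}(iii), the target reduces to two symmetric-difference identities, $I(i+j,-i)\oplus I(i+j,-j)=I(-i,-j)$ and $I(0,-(i+j))\oplus I(i+j,0)=I(-i-j,i+j)$, each an instance of $I(a,c)\oplus I(b,c)=I(a,b)$ from Lemma \ref{Lm:Interval}(iii) once the shared endpoint is placed in the second slot using Lemma \ref{Lm:Interval}(i). Assembling the linear part with these two sums yields exactly \eqref{Eq:Commutator}. As a sanity check one could instead verify directly that $(vu)\cdot(1,z)=uv$ for the claimed $z$, which avoids inverses at the cost of an implicit-equation argument for the radical cross term.
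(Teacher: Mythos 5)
Your argument is correct, but it follows a different route from the paper. The paper's proof is a pure verification: it takes the claimed right-hand side $(1,u)$, forms the product $(b^j,y)(b^i,x)\cdot(1,u)$ with \eqref{Eq:MultGeneral}, and checks that it equals $(b^i,x)(b^j,y)$ --- exactly the ``sanity check'' you relegate to your last sentence. You instead \emph{derive} the commutator as $(vu)^{-1}(uv)$, using the inverse property of Moufang loops and the explicit inverse formula from Theorem \ref{Th:AbelianByCyclic}. The computational core is parallel in both cases --- each reduces to evaluating one $\gamma$ cross term with compound arguments, killing the radical and alternating pieces, and consolidating intervals via Lemma \ref{Lm:Interval} --- and I checked your bookkeeping: $\gamma(P,R)=g^{i+j}\gamma(x,y)+\gamma(x,y)$, the index shift $I(0,-(i+j))+(i+j)=I(i+j,0)$, and the two symmetric differences $I(i+j,-i)\oplus I(i+j,-j)=I(-i,-j)$ and $I(-i-j,0)\oplus I(i+j,0)=I(-i-j,i+j)$ all come out right. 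What your approach buys is that it produces the formula rather than presupposing it, at the modest cost of invoking the inverse formula and the identity $[u,v]=(vu)^{-1}(uv)$ (valid here since Moufang loops have the inverse property, so $L_{vu}^{-1}=L_{(vu)^{-1}}$); the paper's verification avoids inverses entirely and needs only one application of the multiplication formula beyond the two products. One small point to make explicit if you write this up: distributing $g^{-(i+j)}$ over $g^{i+j}\gamma(x,y)+\gamma(x,y)$, and later $g^{-k}$ over the two resulting terms, uses that powers of $g$ act additively on $\rad{\gamma}$, i.e.\ Lemma \ref{Lm:PairRadImg}(iv)--(v).
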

\begin{proof}
Let $(1,u)$ be the right hand side of \eqref{Eq:Commutator}. We need to check that $(b^j,y)(b^i,x)\cdot (1,u) = (b^i,x)(b^j,y)$. Let
\begin{align*}
    v &= \mysum{I(i+j,0)}{g^{-i}y+x}{-x+g^{-j}x+y-g^{-i}y}\\
	& = \mysum{I(i+j,0)}{g^{-i}y}{x} + \mysum{I(i+j,0)}{g^{-i}y}{g^{-j}x} + \mysum{I(i+j,0)}xy + \mysum{I(i+j,0)}{x}{g^{-i}y}\\
      &= \mysum{I(i+j,0)}{g^{-i}y}{g^{-j}x} + \mysum{I(i+j,0)}xy = \mysum{I(0,-i-j)}xy+\mysum{I(i+j,0)}xy = \mysum{I(-i-j,i+j)}xy.
\end{align*}
Then
\begin{align*}
	&(b^j,y)(b^i,x)\cdot (1,u) = (b^{i+j},g^{-i}y+x+\mysum{I(i+j,-i)}xy)\cdot (1,u)\\
	&=(b^{i+j},g^{-i}y+x+\mysum{I(i+j,-i)}xy -x+g^{-j}x+y-g^{-i}y+\mysum{I(-i,-j)}xy+\mysum{I(-i-j,i+j)}xy + v)\\
	&=(b^{i+1},g^{-j}x+y+\mysum{I(i+j,-i)}xy+\mysum{I(-i,-j)}xy+\mysum{I(-i-j,i+j)}xy+\mysum{I(-i-j,i+j)}xy)\\
	&=(b^{i+j},g^{-j}x+y+\mysum{I(i+j,-j)}xy) = (b^i,x)(b^j,y).\qedhere
\end{align*}
\end{proof}

\begin{lemma}\label{Lm:Associator}
Let $Q=\langle b\rangle\ltimes_{(g,\gamma)}X$. For all $i,j,k\in\mathbb Z$ and $x,y,z\in X$ we have
\begin{equation}\label{Eq:Associator} 
	[(b^i,x),(b^j,y),(b^k,z)] = \Big(1,\ \ \ \ \mysum{I(i+j,i+j+k)}xy\ \ +\ \ \mysum{I(i+k,i+j+k)}xz\ \ +\ \ \mysum{I(j+k,i+j+k)}yz\Big).
\end{equation}
\end{lemma}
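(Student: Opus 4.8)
The plan is to compute the two bracketings $(b^i,x)\cdot\big((b^j,y)(b^k,z)\big)$ and $\big((b^i,x)(b^j,y)\big)\cdot(b^k,z)$ directly from the multiplication formula \eqref{Eq:MultGeneral} and then read off the associator, much as in the proof of Lemma \ref{Lm:Commutator}. Both products have first coordinate $b^{i+j+k}$, and since $Q/(1\times X)\cong C$ is a group the associator lies in $1\times X$; I will write it as $(1,a)$. By the defining relation $(x\cdot yz)[x,y,z]=xy\cdot z$ of the associator it then suffices to identify $a$ from the equation $(b^{i+j+k},L)\cdot(1,a)=(b^{i+j+k},R)$, where $L$ and $R$ denote the second coordinates of the left-nested and right-nested products respectively.

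First I would evaluate $R$. Starting from $(b^i,x)(b^j,y)=(b^{i+j},P)$ with $P=g^{-j}x+y+\mysum{I(i+j,-j)}{x}{y}$, a second application of \eqref{Eq:MultGeneral} produces the linear part $g^{-j-k}x+g^{-k}y+z$ together with several $\mysum{}{}{}$-terms. The reductions are routine uses of Lemma \ref{Lm:MySum}: the identity $\mysum{U}{g^ax}{g^by}=\mysum{U+a+b}{x}{y}$ shifts indices, $g^i\mysum{U}{x}{y}=\mysum{U-i}{x}{y}$ pushes powers of $g$ through the sums, and biadditivity of $\gamma$ together with $\img{\gamma}\subseteq\rad{\gamma}$ (Lemma \ref{Lm:PairRadImg}) annihilates every cross term in which a $\mysum{}{}{}$-summand occurs as an argument of $\gamma$; Corollary \ref{Cr:iPairFormula} is needed to expand $g^{-k}(g^{-j}x+y)$. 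Collecting by variable pair, and merging two $(x,y)$-intervals that share the endpoint $k-j$ via Lemma \ref{Lm:Interval}(iii), gives $R=g^{-j-k}x+g^{-k}y+z+\mysum{I(-2k-j,i+j+k)}{x}{y}+\mysum{I(i+k,-j-k)}{x}{z}+\mysum{I(i+j+k,-k)}{y}{z}$. The analogous computation beginning from $(b^j,y)(b^k,z)$ yields $L=g^{-j-k}x+g^{-k}y+z+\mysum{I(i+j,-j-2k)}{x}{y}+\mysum{I(i+j+k,-j-k)}{x}{z}+\mysum{I(j+k,-k)}{y}{z}$.

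Next I would observe that the sought value $a$ is a sum of $\mysum{}{}{}$-terms, hence $a\in\img{\gamma}\subseteq\rad{\gamma}$, so $\gamma(L,a)=\gamma(a,L)=0$ and the product $(b^{i+j+k},L)\cdot(1,a)$ collapses to $(b^{i+j+k},L+a)$. Thus $a=R-L$, where signs are irrelevant because $2\img{\gamma}=0$. The linear parts of $L$ and $R$ coincide and cancel, leaving one symmetric difference of intervals for each pair. For $(x,y)$ the intervals $I(-2k-j,i+j+k)$ and $I(i+j,-2k-j)$ share the endpoint $-2k-j$, so Lemma \ref{Lm:Interval}(iii) gives $I(i+j,i+j+k)$; the $(x,z)$ and $(y,z)$ pairs share the endpoints $-j-k$ and $-k$ and collapse to $I(i+k,i+j+k)$ and $I(j+k,i+j+k)$ respectively. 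This is exactly \eqref{Eq:Associator}.

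The main obstacle is purely bookkeeping: once the multiplication formula and Lemmas \ref{Lm:MySum}, \ref{Lm:Interval} and \ref{Lm:PairRadImg} are in hand there is no conceptual difficulty, but one must track roughly a dozen $\mysum{}{}{}$-terms through the two nested products and apply the symmetric-difference identity of Lemma \ref{Lm:Interval}(iii) at precisely the right places. The one step demanding genuine care is checking that every cross term carrying a radical argument vanishes, since it is exactly this vanishing---a consequence of \eqref{Eq:PairRad}---that keeps the final expression free of linear corrections and symmetric in the three variable pairs.
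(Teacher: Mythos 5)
Your proposal is correct and follows essentially the same route as the paper: both compute the two nested triple products via \eqref{Eq:MultGeneral}, use $\img{\gamma}\subseteq\rad{\gamma}$ to absorb the associator term, and reduce the comparison to symmetric differences of intervals via Lemma \ref{Lm:Interval}(iii); your intermediate expressions for the two products match the paper's exactly, the only organizational difference being that the paper posits the candidate $u$ up front and verifies $((b^i,x)\cdot(b^j,y)(b^k,z))(1,u)=(b^i,x)(b^j,y)\cdot(b^k,z)$, whereas you solve for $a=R-L$. Note only that your labels $L$ and $R$ are swapped relative to your own description (the quantity you compute as $R$ is the left-nested product), which is harmless here because $2\img{\gamma}=0$ makes the sign of $R-L$ irrelevant.
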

\begin{proof}
Let $(1,u)$ be the right hand side of the associator formula \eqref{Eq:Associator}. Our goal is to check the equation $((b^i,x)\cdot (b^j,y)(b^k,z))(1,u) = (b^i,x)(b^j,y)\cdot (b^k,z)$. Direct computation yields
\begin{displaymath} 
	(b^i,x)\cdot (b^j,y)(b^k,z)
	= (b^{i+j+k},g^{-j-k}x+g^{-k}y+z+\ \mysum{I(i+j,-j-2k)}xy\ +\ \mysum{I(i+j+k,-j-k)}xz\ +\ \mysum{I(j+k,-k)}yz).
\end{displaymath}
Upon noting that  $u\in\rad{\gamma}$, we see that $((b^i,x)\cdot (b^j,y)(b^k,z))(1,u)$ is therefore equal to
\begin{displaymath} 
	(b^{i+j+k},g^{-j-k}x+g^{-k}y+z+\ \mysum{I(i+j+k,-j-2k)}xy\ +\ \mysum{I(i+k,-j-k)}xz\ +\ \mysum{I(i+j+k,-k)}yz).
\end{displaymath}
Using Corollary \ref{Cr:iPairFormula} in the third equality, we calculate
\begin{align*} 
	&(b^i,x)(b^j,y)\cdot (b^k,z) = (b^{i+j},g^{-j}x+y+\mysum{I(i+j,-j)}xy)(b^k,z) \\
	&=(b^{i+j+k},g^{-k}(g^{-j}x+y)+g^{-k}\mysum{I(i+j,-j)}xy+z+\mysum{I(i+j+k,-k)}{g^{-j}x+y}{z}\\
	&=(b^{i+j+k},g^{-j-k}x+g^{-k}y+\ \mysum{I(-2k,k)}{g^{-j}x}{y} + z +\ \mysum{I(i+j+k,-j+k)}xy\ +\ \mysum{I(i+k,-j-k)}xz\ +\ \mysum{I(i+j+k,-k)}yz).
\end{align*}
Since 
\begin{displaymath}
	\mysum{I(-2k,k)}{g^{-j}x}{y}\ +\ \mysum{I(i+j+k,-j+k)}xy\ \ =\ \ \mysum{I(-j-2k,-j+k)}xy\  +\  \mysum{I(i+j+k,-j+k)}xy\ \ =\ \ \mysum{I(i+j+k,-j-2k)}xy,
\end{displaymath}
we are through.
\end{proof}

For a loop $Q$, the \emph{commutator subloop} $\com{Q}$ (resp.~\emph{associator subloop} $\asc{Q}$, resp.~\emph{derived subloop} $Q'$) is the smallest normal subloop $N$ such that $Q/N$ is commutative (resp.~associative, resp.~abelian group). Thus $\com{Q}$ (resp.~$\asc{Q}$, resp.~$Q'$) is the normal subloop of $Q$ generated by all commutators (resp.~associators, resp.~commutators and associators).

\begin{proposition}\label{Pr:DerivedSubloop}
Let $Q=\langle b\rangle\ltimes_{(g,\gamma)}X$. Then
\begin{displaymath}
	1\times \langle\img{\gamma}\rangle = \asc{Q}\le\com{Q}=Q'=1\times \langle\img{\gamma}\cup\img{1-g}\rangle.
\end{displaymath}
\end{proposition}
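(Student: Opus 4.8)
The plan is to compute the three subloops $\asc Q$, $\com Q$ and $Q'$ separately and then read off all the asserted (in)equalities. Throughout I would write $M_0=\langle\img\gamma\rangle$ and $N=\langle\img\gamma\cup\img{1-g}\rangle$, so that the claim becomes $\asc Q=1\times M_0$ and $\com Q=Q'=1\times N$. A structural fact used repeatedly is that $1\times X$ is a subgroup of $Q$ (for $i=j=0$ the formula \eqref{Eq:MultGeneral} reduces to addition in $X$), so for any subloop $H$ of $Q$ the intersection $H\cap(1\times X)$ is a subgroup of $(X,+)$; thus to place a set $1\times S$ inside $H$ it suffices to place $S$ inside $H\cap(1\times X)$ and pass to the generated subgroup. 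The one genuinely non-formal preliminary I would record first is that $\img{1-g^n}\subseteq N$ for every $n\in\mathbb Z$: this follows by a telescoping induction, since $x-g^{n+1}(x)=(x-g(x))+(g(x)-g^n(g(x)))$ and $N$ is a subgroup, with the negative case obtained by applying the positive case to $g^{-n}(x)$.

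For $\asc Q$: Lemma \ref{Lm:Associator} shows every associator is $(1,w)$ with $w$ a sum of terms $\mysum Uxy$, each summand $g^{-k}\gamma(x,y)$ lying in $\img\gamma$ by Lemma \ref{Lm:PairRadImg}, so every associator lies in $1\times M_0$. For the reverse inclusion I would specialize Lemma \ref{Lm:Associator} at $i=j=0$, $k=1$, where two of the three intervals collapse to $I(1,1)=\emptyset$ and the first becomes $I(0,1)=\{0\}$, giving $[(1,x),(1,y),(b,z)]=(1,\gamma(x,y))$; hence $1\times M_0\subseteq\asc Q$. To upgrade the containment of associators to $\asc Q\subseteq 1\times M_0$ I must know $1\times M_0$ is a normal subloop with associative quotient. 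For this I would check that reduction mod $M_0$ turns $g$ into an automorphism $\bar g$ of $X/M_0$: Corollary \ref{Cr:iPairFormula} gives $g^{-j}(x+y)\equiv g^{-j}(x)+g^{-j}(y)\pmod{M_0}$ because the correction term lies in $\img\gamma\subseteq M_0$, so $(b^i,x)\mapsto(b^i,x+M_0)$ is a homomorphism of $Q$ onto the group $\langle b\rangle\ltimes_{\bar g}(X/M_0)$ with kernel $1\times M_0$. Its target being associative forces $\asc Q\subseteq 1\times M_0$, completing $\asc Q=1\times M_0$.

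For $\com Q$ and $Q'$: first, $1\times N$ is normal because modulo $N$ the induced map is the identity (as $x-g(x)\in\img{1-g}\subseteq N$), so $(b^i,x)\mapsto(b^i,x+N)$ is a homomorphism onto the abelian group $\langle b\rangle\times(X/N)$ with kernel $1\times N$; this simultaneously gives $Q'\subseteq 1\times N$. By Lemma \ref{Lm:Commutator} every commutator is $(1,w)$ with $w$ a combination of $(1-g^{-i})$-terms (in $N$ by the preliminary) and $\gamma$-sums (in $M_0\subseteq N$), so $\com Q\subseteq 1\times N$. For $1\times N\subseteq\com Q$ I would extract generators from the single family $c(x,y):=[(1,x),(b,y)]$; evaluating Lemma \ref{Lm:Commutator} at $i=0$, $j=1$ and using $2\img\gamma=0$ collapses the two $\gamma$-sums to $\gamma(x,y)$, yielding $c(x,y)=(1,-(1-g^{-1})(x)+\gamma(x,y))$. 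Since $\com Q\cap(1\times X)$ is a subgroup, $c(x,y)-c(x,0)=(1,\gamma(x,y))\in\com Q$, whence $\img\gamma\subseteq\com Q$; and then $c(x,y)-(1,\gamma(x,y))=(1,-(1-g^{-1})(x))\in\com Q$ gives $\img{1-g^{-1}}\subseteq\com Q$, which generates the same subgroup as $\img{1-g}$. Thus $N\subseteq\com Q\cap(1\times X)$ and $1\times N\subseteq\com Q$, so $\com Q=1\times N$. Finally the general inclusion $\com Q\le Q'$ together with $Q'\le 1\times N=\com Q$ forces $Q'=1\times N=\com Q$, and $\asc Q=1\times M_0\le 1\times N=\com Q$ yields the remaining inequality.

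The main obstacle is the non-additivity of $g$: neither $\img{1-g}$ nor the bracket expressions are a priori subgroups, and the $\gamma$ and $1-g$ contributions are entangled in every commutator. The two technical points that resolve this are the telescoping statement $\img{1-g^n}\subseteq N$ and the trick of subtracting the $y=0$ specialization $c(x,0)$ to peel off $\gamma(x,y)$ cleanly; once $\gamma$ is isolated, the $1-g$ part separates automatically. Everything else is bookkeeping with the interval calculus of Lemmas \ref{Lm:Interval} and \ref{Lm:MySum}, together with the observation that passing to the quotients $X/M_0$ and $X/N$ turns $g$ into an automorphism and into the identity, respectively, which is exactly what makes the two quotient maps homomorphisms onto a group and onto an abelian group.
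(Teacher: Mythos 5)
Your proposal is correct, and it reaches the same descriptions of $\asc{Q}$, $\com{Q}$ and $Q'$ as the paper, but the route differs in two places worth noting. For the upper bounds the paper simply observes that all commutators and associators land in $1\times X$, remarks that no normal closure is needed because $1\times X$ is an abelian normal subgroup, and then reads the containments $\asc{Q}\le 1\times\langle\img{\gamma}\rangle$ and $\com{Q}\le 1\times A$ directly off the formulas of Lemmas \ref{Lm:Commutator} and \ref{Lm:Associator}; you instead exhibit explicit surjective homomorphisms $Q\to\langle b\rangle\ltimes_{\bar g}(X/M_0)$ (a group) and $Q\to\langle b\rangle\times(X/N)$ (an abelian group) with kernels $1\times M_0$ and $1\times N$. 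Your version is more work but buys something: it verifies outright that these kernels are normal subloops with associative, resp.\ abelian, quotients, so you never need the paper's ``normal closure is unnecessary'' shortcut (whose justification the paper leaves implicit). The well-definedness of $\bar g$ on $X/M_0$ does deserve one explicit line --- it follows from Lemma \ref{Lm:PairRadImg}(iii)--(v) since $M_0\le\rad{\gamma}$ is $g$-invariant and $g$ is additive there --- but that is routine. For the lower bound on $\com{Q}$ the paper first extracts $\img{1-g}$ from $[(b^i,x),(b^j,0)]$ and then disentangles the two $\gamma$-sums via the substitution $j=-i$; your single family $c(x,y)=[(1,x),(b,y)]$ together with subtracting $c(x,0)$ inside the subgroup $\com{Q}\cap(1\times X)$ isolates $\gamma(x,y)$ and $\img{1-g^{-1}}$ more directly and is, if anything, a cleaner computation. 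The associator lower bound and the telescoping argument for $\img{1-g^n}\subseteq N$ coincide with the paper's.
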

\begin{proof}
It is clear from Lemmas \ref{Lm:Commutator} and \ref{Lm:Associator} that $\com{Q}$, $\asc{Q}$ and $Q'$ are subloops of $1\times X$. Since $X$ is an abelian group, it follows that $\com{Q}$ (resp.~$\asc{Q}$, resp.~$Q'$) is the subloop of $Q$ generated by all commutators (resp.~associators, resp.~commutators and associators), that is, it is not necessary to apply normal closure.

Let $A=\langle \img{1-g}\cup \img{\gamma}\rangle \le X$. We will first show that $\com{Q}=1\times A$. We have $g^{i+1}x-g^ix = g(g^ix)-g^ix\in\img{g-1}\subseteq A$ for all $i$ and therefore $g^jx-x = (g^jx-g^{j-1}x)+(g^{j-1}x-g^{j-2}x)+\cdots+(gx-x)\in A$ for all $j\ge 0$. Since $g^{-j}x-x = -(g^jg^{-j}x-g^{-j}x)$, we have $g^jx-x\in A$ for all $j$. Also, $\mysum{I(i,j)}xy\in A$ as $g$ permutes $\img{\gamma}\subseteq A$. By \eqref{Eq:Commutator},
\begin{displaymath}
	[(b^i,x),(b^j,y)] = \Big(1,-x+g^{-j}x+y-g^{-i}y+\mysum{I(-i,-j)}xy + \mysum{I(-i-j,i+j)}xy\Big).
\end{displaymath}
It follows that $1\times A$ contains all commutators and thus $\com{Q}\le 1\times A$. Conversely, write $\com{Q}=1\times Y$ for some $Y\le X$. We have $[(b^i,x),(b^j,0)] =(1,-x+g^{-j}x)\in \com{Q}$. In particular, $\img{1-g}\subseteq Y$. The commutator formula then implies  $\mysum{I(-i,-j)}xy + \mysum{I(-i-j,i+j)}xy\in Y$. With $j=-i$ we obtain $\mysum{I(-i,i)}xy\in Y$, hence $\mysum{I(-i-j,i+j)}xy\in Y$ and $\mysum{I(-i,-j)}xy\in Y$. Then $\gamma(x,y) = \mysum{I(0,1)}xy\in Y$ and $\img{\gamma}\subseteq Y$. This shows that $1\times A\le\com{Q}$.

The associator formula \eqref{Eq:Associator} implies that $\asc{Q}\le\langle\img{\gamma}\rangle\le\com{Q}$, so $\com{Q}=Q'$. Substituting $i=j=0$, $k=1$ and $z=0$ into \eqref{Eq:Associator} yields $\gamma(x,y) = \mysum{I(0,1)}xy\in\asc{Q}$, so $\langle\img{\gamma}\rangle\le\asc{Q}$.
\end{proof}

\begin{corollary}\label{Cr:NoCML}
Let $Q=\langle b\rangle\ltimes_{(g,\gamma)}X$. If $Q$ is commutative then $Q$ is a group.
\end{corollary}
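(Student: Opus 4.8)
The plan is to deduce the result almost immediately from Proposition \ref{Pr:DerivedSubloop}, which already carries out the essential structural computation. Recall that $\com{Q}$ is by definition the smallest normal subloop $N$ of $Q$ with $Q/N$ commutative, while $\asc{Q}$ is the smallest normal subloop $N$ with $Q/N$ associative. If $Q$ is commutative, then $N=1$ witnesses the defining property of $\com{Q}$, so $\com{Q}=1$.

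The key step is then to invoke the inclusion $\asc{Q}\le\com{Q}$ established in Proposition \ref{Pr:DerivedSubloop}. Since $\com{Q}=1$, this forces $\asc{Q}=1$. But $\asc{Q}=1$ says precisely that $Q/1=Q$ is associative, and an associative loop is a group. Hence $Q$ is a group, as claimed.

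I do not expect any genuine obstacle, since all the real work is already done in Proposition \ref{Pr:DerivedSubloop}, where the associator and commutator subloops were computed explicitly as $1\times\langle\img{\gamma}\rangle$ and $1\times\langle\img{\gamma}\cup\img{1-g}\rangle$. For the record, one could argue more concretely instead: commutativity forces $\langle\img{\gamma}\cup\img{1-g}\rangle=0$, so that $\gamma=0$ and $g=\mathrm{id}$, whereupon the multiplication \eqref{Eq:MultGeneral} collapses to $(b^i,x)\cdot(b^j,y)=(b^{i+j},x+y)$, i.e.~the direct product $C\times X$, which is an abelian group. However, the one-line argument via $\asc{Q}\le\com{Q}$ is cleaner and avoids reopening the multiplication formula, so I would present that version.
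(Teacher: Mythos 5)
Your proof is correct and matches the paper's own argument exactly: the paper also deduces $\com{Q}=1$ from commutativity and then uses the inclusion $\asc{Q}\le\com{Q}$ from Proposition \ref{Pr:DerivedSubloop} to conclude that $Q$ is a group. No issues.
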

\begin{proof}
If $Q$ is commutative then $1=\com{Q}$. Since $\asc{Q}\le\com{Q}$ by Proposition \ref{Pr:DerivedSubloop}, $Q$ is a group.
\end{proof}

Recall the parameter $r(g,\gamma)$ of \eqref{Eq:r}, possibly with $r(g,\gamma)=\infty$.

\begin{proposition}\label{Pr:NZ}
Let $Q=\langle b\rangle\ltimes_{(g,\gamma)}X$ and let $r=r(g,\gamma)$.
\begin{align*}
    \nuc{Q} &= \{(b^i,x): x\in\rad{\gamma}\text{ and either }i=0\text{ or }r\text{ divides }i\},\\
    Z(Q) &= \{(b^i,x)\in\nuc{Q}: |g|\text{ divides }i\text{ and }g(x)=x\}.
\end{align*}
\end{proposition}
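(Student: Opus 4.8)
The plan is to read off both $\nuc{Q}$ and $Z(Q)$ directly from the explicit associator and commutator formulas in Lemmas~\ref{Lm:Associator} and~\ref{Lm:Commutator}, using Lemma~\ref{Lm:r2} to decide when the relevant sums $\mysum{I(\cdot,\cdot)}{\cdot}{\cdot}$ vanish.

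For the nucleus I would first observe that, because $\gamma$ is symmetric, the right-hand side of \eqref{Eq:Associator} is invariant under simultaneously permuting the three argument pairs $(i,x)$, $(j,y)$, $(k,z)$; hence the three nuclear conditions $x(yz)=(xy)z$, $y(xz)=(yx)z$, $y(zx)=(yz)x$ are equivalent here, and $(b^i,x)\in\nuc{Q}$ if and only if $[(b^i,x),(b^j,y),(b^k,z)]=(1,0)$ for all $j,k\in\mathbb Z$ and $y,z\in X$. I would then extract two necessary conditions by specialization. Taking $j=k=0$ collapses the first two summands (the interval $I(i,i)$ is empty) and leaves $\mysum{I(0,i)}{y}{z}=0$ for all $y,z$, which by Lemma~\ref{Lm:r2} means $i=0$ or $r\mid i$. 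Taking $z=0$ kills the last two summands (since $\gamma(\cdot,0)=0$) and, with $j=-i$ and $k=1$, reduces the first summand to $\gamma(x,y)$; its vanishing for all $y$ says exactly $x\in\rad{\gamma}$. Conversely, if $x\in\rad{\gamma}$ then $\gamma(x,\cdot)=0$ annihilates the first two summands of \eqref{Eq:Associator}, while the third summand $\mysum{I(j+k,(j+k)+i)}{y}{z}$ vanishes by Lemma~\ref{Lm:r2} whenever $i=0$ or $r\mid i$. Thus the two conditions are also sufficient, giving the stated description of $\nuc{Q}$.

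For the center I would use that $Z(Q)\subseteq\nuc{Q}$, so it remains only to decide which nuclear $(b^i,x)$ commute with every element. Such an element has $x\in\rad{\gamma}$, so all the $\gamma$-terms in \eqref{Eq:Commutator} vanish and the commutator simplifies to $[(b^i,x),(b^j,y)]=(1,\,-x+g^{-j}x+y-g^{-i}y)$. Requiring this to be trivial for all $j,y$, I set $y=0$ to obtain $g^{-j}x=x$ for all $j$, equivalently $g(x)=x$, and set $j=0$ to obtain $g^{-i}y=y$ for all $y$, equivalently $g^{i}=\id{X}$, i.e.\ $|g|$ divides $i$. Conversely these two conditions clearly make the displayed expression vanish, yielding $Z(Q)=\{(b^i,x)\in\nuc{Q}:|g|\mid i\text{ and }g(x)=x\}$.

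The computations themselves are short; the real work is bookkeeping. The main obstacle is the symmetry reduction for the nucleus (so that testing associators in a single position suffices) together with choosing the specializations that isolate the index condition and the radical condition separately. Throughout one uses freely that signs are irrelevant because $2\img{\gamma}=0$ (Lemma~\ref{Lm:PairRadImg}).
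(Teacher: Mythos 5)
Your proof is correct and follows essentially the same route as the paper: both read the nucleus and center off the explicit associator and commutator formulas of Lemmas \ref{Lm:Associator} and \ref{Lm:Commutator}, use specializations of $j,k,y,z$ to isolate the conditions $x\in\rad{\gamma}$ and $r\mid i$ (via Lemma \ref{Lm:r2}), and then reduce the center computation to the linear part $-x+g^{-j}x+y-g^{-i}y$. Your explicit symmetry argument showing that one associator position suffices for nucleus membership is a point the paper leaves implicit, and your particular choices of specialization ($j=k=0$ versus the paper's $j=0$, $k=1$, $z=0$) are immaterial variations.
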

\begin{proof}
By Lemma \ref{Lm:Associator}, $(b^i,x)\in\nuc{Q}$ if and only $\mysum{I(i+j,i+j+k)}xy + \mysum{I(i+k,i+j+k)}xz + \mysum{I(j+k,i+j+k)}yz = 0$ for all $j,k\in\mathbb Z$ and $y,z\in X$. Suppose that $(b^i,x)\in\nuc{Q}$. Setting $j=0$, $k=1$ and $z=0$ yields $0=\mysum{I(i,i+1)}xy = g^{-i}\gamma(x,y)$, so $x\in\rad{\gamma}$. Then $\mysum{I(j+k,i+j+k)}yz=0$ implies $\mysum{I(0,i)}yz=0$. By Lemma \ref{Lm:r2}, either $i=0$ or $r$ divides $i$. Conversely, if $i=0$ or $r$ divides $i$ then $\mysum{I(j+k,i+j+k)}yz=0$ for all $j,k,y,z$ by Lemma \ref{Lm:r2}. If also $x\in\rad{\gamma}$, we deduce that $(b^i,x)\in\nuc{Q}$.

Suppose that $(b^i,x)\in\nuc{Q}$. By Lemma \ref{Lm:Commutator}, $(b^i,x)\in Z(Q)$ if and only if $-x+g^{-j}x+y-g^{-i}y=0$ for all $j\in\mathbb Z$ and $y\in X$. Suppose that $(b^i,x)\in Z(Q)$. Setting $j=-1$ and $y=0$ yields $x=g(x)$. Then $y=g^{-i}y$ for all $y$ implies that $|g|$ divides $i$. Conversely, if $|g|$ divides $i$ and $x=g(x)$ then $-x+g^{-j}x+y-g^{-i}y=0$ and $(b^i,x)\in Z(Q)$.
\end{proof}

\section{A note on abelian congruences in Moufang loops}\label{Sc:Abelian}

There are two theories of solvability for loops: classical solvability based on the standard definition of solvability from group theory, and congruence solvability based on a general solvability theory from congruence modular varieties. See \cite{FM} for solvability in congruence modular varieties, \cite{StaVojComm} for an introduction to congruence solvability in loops, and \cite{DrVo} for a detailed discussion about solvability in loops.

The two concepts of solvability coincide in groups but congruence solvability is strictly stronger than classical solvability in loops. It is an open question whether the two concepts of solvability coincide in Moufang loops. We showed in \cite{DrVo} that Moufang loops of odd order are congruence solvable, strengthening Glauberman's Odd Order Theorem for Moufang loops \cite{GlaubermanII}.

The principal difficulty is that, unlike in groups, an abelian normal subgroup of a Moufang loop $Q$ need not induce an abelian congruence of $Q$ (see below). For instance, in \cite{DrVo} we constructed a class of nilpotent Moufang loops $Q$ possessing an abelian normal subgroup that does not induce an abelian congruence of $Q$.

A normal subloop $X$ of a loop $Q$ induces the congruence $\rho_X = \{(a,b)\in Q:aX=bX\}$ of $Q$. By results of \cite{StaVojComm} and \cite{Barnes}, the commutator $[\rho,\sigma]_Q$ of congruences $\rho$, $\sigma$ of $Q$ is the congruence of $Q$ generated by
\begin{equation}\label{Eq:CongruenceCommutator}
    \{(T_{b_1}(a),T_{c_1}(a)),\,(L_{b_1,b_2}(a),L_{c_1,c_2}(a)),\,(R_{b_1,b_2}(a),R_{c_1,c_2}(a)): 1\rho a,\,b_1\sigma c_1,\,b_2\sigma c_2\}.
\end{equation}
A congruence $\rho$ of $Q$ is \emph{abelian} if $[\rho,\rho]_Q = \{(a,a):a\in Q\}$.

The following result shows that the Moufang loops afforded by Theorem \ref{Th:AbelianByCyclic} frequently contain an abelian normal subgroup that does not induce an abelian congruence.

\begin{proposition}
Let $Q=\langle b\rangle\ltimes_{(g,\gamma)}X$. Then the abelian normal subgroup $1\times X$ of $Q$ induces an abelian congruence of $Q$ if and only if $Q$ is a group.\end{proposition}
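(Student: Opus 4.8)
The plan is to reduce both implications to the single condition $\gamma=0$. By Proposition \ref{Pr:DerivedSubloop} we have $\asc{Q}=1\times\langle\img{\gamma}\rangle$, so $Q$ is associative, and hence a group, if and only if $\gamma$ vanishes identically. It therefore suffices to prove that the congruence $\rho=\rho_{1\times X}$ is abelian if and only if $\gamma=0$. Note that the cosets of $1\times X$ are exactly the sets $\{b^i\}\times X$, so $(b^i,x)\,\rho\,(b^j,y)$ precisely when $i\equiv j$; consequently, by \eqref{Eq:CongruenceCommutator}, $\rho$ is abelian if and only if every generating pair listed there is diagonal.

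For the forward direction it is enough to use the necessity that $T_{b_1}(a)=T_{c_1}(a)$ whenever $a\in 1\times X$ and $b_1$, $c_1$ lie in one coset. The crux is the computation of a single conjugation: taking $a=(1,w)$ and $b_1=(b,u)$, I would use $T_{b_1}=R_{b_1}^{-1}L_{b_1}$, the inverse formula of Theorem \ref{Th:AbelianByCyclic}, and the multiplication \eqref{Eq:MultGeneral} to establish
\[
    T_{(b,u)}(1,w)=\big(1,\ g(w)+\gamma(u,w)\big).
\]
Comparing this with its value at $u=0$, namely $(1,g(w))$, diagonality of the corresponding $T$-pair forces $\gamma(u,w)=0$ for all $u,w\in X$. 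Thus $\gamma=0$ and $Q$ is a group.

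For the converse, assume $Q$ is a group. Associativity makes each $L_{b_1,b_2}$ and $R_{b_1,b_2}$ the identity, so the corresponding pairs in \eqref{Eq:CongruenceCommutator} are automatically diagonal, while the $T$-pairs reduce to $\big(b_1ab_1^{-1},\,c_1ac_1^{-1}\big)$ with $a\in 1\times X$ and $c_1=b_1n$ for some $n\in 1\times X$. Since $1\times X$ is abelian and $a,n\in 1\times X$, we have $nan^{-1}=a$, whence $c_1ac_1^{-1}=b_1(nan^{-1})b_1^{-1}=b_1ab_1^{-1}$ and this pair is diagonal as well. Hence $[\rho,\rho]_Q$ is the diagonal and $\rho$ is abelian.

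The main obstacle is the displayed conjugation identity. Establishing it requires evaluating two products via \eqref{Eq:MultGeneral}, expanding $g$ across a sum in which only one summand lies in $\rad{\gamma}$ so that Corollary \ref{Cr:iPairFormula} and Lemma \ref{Lm:PairRadImg}(iv) apply, discarding the terms built from $\gamma(u,g(u))=g^{-1}\gamma(u,u)=0$, and collapsing three interval-indexed sums into one by means of the symmetric-difference identities of Lemma \ref{Lm:Interval} and Lemma \ref{Lm:MySum}. The bookkeeping is routine once set up: the governing interval $I(2,-1)\oplus I(-1,0)\oplus I(1,2)$ collapses to $I(0,1)=\{0\}$, leaving precisely the single term $\gamma(u,w)$.
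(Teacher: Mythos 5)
Your proposal is correct and follows essentially the same route as the paper: both arguments isolate the $T$-generating pairs of $[\rho,\rho]_Q$, compute the conjugation of an element of $1\times X$ by $(b,u)$ (the paper via $T^{-1}_{(b,y)}(1,x)=(1,x)[(1,x),(b,y)]$ and Lemma \ref{Lm:Commutator}, you directly from \eqref{Eq:MultGeneral}, yielding the equivalent formula $T_{(b,u)}(1,w)=(1,g(w)+\gamma(u,w))$, which checks out), and conclude $\gamma=0$ and hence associativity from Proposition \ref{Pr:DerivedSubloop}. The only cosmetic differences are that you verify the easy direction explicitly rather than citing the standard group-theoretic fact, and your interval bookkeeping $I(2,-1)\oplus I(-1,0)\oplus I(1,2)=\{0\}$ is accurate.
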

\begin{proof}
If $Q$ is a group then every abelian normal subgroup of $Q$ induces an abelian congruence of $Q$. For the converse, let $\rho$ be the congruence induced by $1\times X$, so that the equivalence classes of $\rho$ are precisely the cosets of $1\times X$ in $Q$. Recall that $T_{a^{-1}}=T_a^{-1}$ in Moufang loops. To show that $\rho$ is not abelian, it therefore suffices to find $x,y,z\in X$ such that $T^{-1}_{(b,y)}(1,x)\ne T^{-1}_{(b,z)}(1,x)$, cf.~\eqref{Eq:CongruenceCommutator}. Now, $T^{-1}_u(v) = u^{-1}vu = v(v^{-1}u^{-1}vu) = v[v,u]$. By Lemma \ref{Lm:Commutator}, we have
\begin{align*}
    T^{-1}_{(b,y)}(1,x) &= (1,x)[(1,x),(b,y)] = (1,x)(1,-x+g^{-1}(x)+y-y+\mysum{I(0,-1)}xy+\mysum{I(-1,1)}xy)\\
    &= (1,x)(1,-x+g^{-1}(x)+\mysum{I(0,1)}xy) = (1,x)(1,-x+g^{-1}(x)+\gamma(x,y))\\
    &= (1,g^{-1}(x)+\gamma(x,y)).
\end{align*}
Suppose that $T^{-1}_{(b,y)}(1,x)=T^{-1}_{(b,z)}(1,x)$ for all $x,y,z\in X$. Then $\gamma(x,y)=\gamma(z,y)$ for all $x,y,z\in X$, hence $y\in\rad{\gamma}$ for every $y\in Q$, and $\img{\gamma}=1$. By Proposition \ref{Pr:DerivedSubloop}, $Q$ is a group.
\end{proof}

\section{Pseudoautomorphisms induced by pseudoautomorphisms}\label{Sc:Pseudo}

We now start working toward a partial converse of Theorem \ref{Th:AbelianByCyclic}. While this section and the next are written more generally than necessary, their main goal is to show that every conjugation in a Moufang loop restricted to an abelian normal subgroup yields a Moufang permutation, cf.~Proposition \ref{Pr:Beta} and Definition \ref{Df:MoufangPerm}.

Throughout this section let $Q$ be a Moufang loop and $X$ a normal subloop of $Q$. For a permutation $f$ on $Q$ and $x\in Q$, define a permutation $\mu_{f,x}$ on $Q$ by
\begin{equation}\label{Eq:Mu}
	\mu_{f,x} = R_x^{-1}f^{-1}R_{f(x)}f.
\end{equation}
Note that if $f(1)=1$ then $\mu_{f,x}(1) = R_x^{-1}f^{-1}f(x) = 1$.

Suppose that $f$ is a pseudoautomorphism of $Q$, cf.~\eqref{Eq:DfPseudo}. By Proposition \ref{Pr:InducedPseudo}, $\mu_{f,x}$ is then also a pseudoautomorphism of $Q$ and we will call $\mu_{f,x}$ the pseudoautomorphism \emph{induced} by $f$ and $x$. In the special case when $f$ is an inner mapping of $Q$, Proposition \ref{Pr:InducedFromInn} shows that the induced pseudoautomorphism $\mu_{f,x}$ acts on $X$ and it gives a necessary condition for the companion of $\mu_{f,x}$ to belong to $X$. In particular, if $x\in X$ and $f\in\inn{Q}$ then the restriction of $\mu_{f,x}$ to $X$ is a pseudoautomorphism of $X$, cf.~Corollary \ref{Cr:I1}. Finally, if $f=T_a$ for some $a\in Q$ then $R_x\mu_{f,x}R_x^{-1}$ can be expressed in terms of $f$ and left translations, cf.~Proposition \ref{Pr:InducedFromT}.

Note that the concept $\mu_{f,x}$ is trivial for pseudoautomorphisms of groups. Every pseudoautomorphism $f$ of a group $Q$ is an automorphism and therefore $\mu_{f,x} =R_x^{-1}(f^{-1}R_{f(x)}f) = R_x^{-1}R_{f^{-1}(f(x))} = R_x^{-1}R_x=\id{Q}$.

\begin{proposition}\label{Pr:InducedPseudo}
Let $Q$ be a Moufang loop, $(c,f)\in\lps{Q}$ and $x\in Q$. Then
\begin{displaymath}
	(x^{-1} f^{-1}(c^{-1}f(x)c),\,\mu_{f,x})\in\lps{Q}.
\end{displaymath}
\end{proposition}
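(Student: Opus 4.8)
The plan is to realize $\mu_{f,x}$ as the middle component of a suitable autotopism whose middle component fixes $1$, and then to read off the companion via Lemma \ref{Lm:AtpToPseudo}. The raw material consists of two kinds of autotopisms already available in a Moufang loop: the autotopism $(L_cf,f,L_cf)$ that is equivalent to $(c,f)\in\lps{Q}$, and the Moufang autotopism $(L_a,R_a,R_aL_a)$ coming from \eqref{Eq:M1}, valid for every $a\in Q$. Since autotopisms compose and invert componentwise, I obtain in particular the four autotopisms $(L_cf,f,L_cf)$, $(L_{f(x)},R_{f(x)},R_{f(x)}L_{f(x)})$, $\bigl((L_cf)^{-1},f^{-1},(L_cf)^{-1}\bigr)$, and $\bigl(L_x^{-1},R_x^{-1},L_x^{-1}R_x^{-1}\bigr)$, the last being the inverse of the Moufang autotopism at $a=x$.

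First I would compose these four in the order
\begin{displaymath}
\bigl(L_x^{-1},R_x^{-1},L_x^{-1}R_x^{-1}\bigr)\bigl((L_cf)^{-1},f^{-1},(L_cf)^{-1}\bigr)\bigl(L_{f(x)},R_{f(x)},R_{f(x)}L_{f(x)}\bigr)(L_cf,f,L_cf),
\end{displaymath}
which produces an autotopism $(\alpha,\beta,\gamma)$ whose middle component is exactly $\beta=R_x^{-1}f^{-1}R_{f(x)}f=\mu_{f,x}$ by the definition \eqref{Eq:Mu}. Next I would verify the hypothesis $\beta(1)=1$ of Lemma \ref{Lm:AtpToPseudo}: since every pseudoautomorphism of a Moufang loop is a semiautomorphism we have $f(1)=1$, so $\mu_{f,x}(1)=R_x^{-1}f^{-1}R_{f(x)}f(1)=R_x^{-1}f^{-1}(f(x))=R_x^{-1}(x)=1$, as already observed after \eqref{Eq:Mu}. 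Lemma \ref{Lm:AtpToPseudo} then yields $(\alpha(1),\mu_{f,x})\in\lps{Q}$, so only the evaluation of the first component $\alpha=L_x^{-1}(L_cf)^{-1}L_{f(x)}L_cf$ at $1$ remains.

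This last computation, which I expect to be the only step requiring care, proceeds by applying $\alpha$ to $1$ from the right: $L_cf(1)=c$, then $L_{f(x)}(c)=f(x)c$, then $(L_cf)^{-1}(f(x)c)=f^{-1}(c^{-1}\cdot f(x)c)$ using the inverse property $L_c^{-1}=L_{c^{-1}}$, and finally $L_x^{-1}$ contributes a leading $x^{-1}$. Here I would invoke diassociativity to write $c^{-1}(f(x)c)=c^{-1}f(x)c$ unambiguously. The outcome is $\alpha(1)=x^{-1}f^{-1}(c^{-1}f(x)c)$, precisely the companion in the statement.

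The main obstacle is purely one of bookkeeping rather than of ideas: one must fix the composition order of the four autotopisms so that the middle component comes out literally as $\mu_{f,x}=R_x^{-1}f^{-1}R_{f(x)}f$ and not as some conjugate or inverse thereof, and one must track the inverse-property simplifications ($L_c^{-1}=L_{c^{-1}}$, $L_x^{-1}=L_{x^{-1}}$) carefully when computing $\alpha(1)$. Once the autotopism is assembled correctly, the conclusion is immediate from Lemma \ref{Lm:AtpToPseudo}.
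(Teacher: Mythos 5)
Your proposal is correct and takes essentially the same route as the paper: the paper forms exactly the same composite autotopism $\theta_x^{-1}\varphi^{-1}\theta_{f(x)}\varphi$ (with $\theta_a$ the Moufang autotopism from \eqref{Eq:M1} and $\varphi=(L_cf,f,L_cf)$), observes $\mu_{f,x}(1)=1$, and applies Lemma \ref{Lm:AtpToPseudo}, evaluating the first component at $1$ just as you do. The composition order and the companion computation match the paper's proof step for step.
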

\begin{proof}
Let $\theta_x = (L_x,R_x,L_xR_x)$ and $\vhi = (L_cf,f,L_cf)$. Note that $\theta_x\in\atp{Q}$ since $Q$ is Moufang, and $\vhi\in\atp{Q}$ since $(c,f)\in\lps{Q}$. Hence $\psi = \theta_x^{-1}\vhi^{-1}\theta_{f(x)}\vhi\in \atp{Q}$. The middle coordinate of $\psi$ is equal to $\mu_{f,x} = R_x^{-1}f^{-1}R_{f(x)}f$. Since $f(1)=1$ holds for any pseudoautomorphism $f$ of a Moufang loop, we have $\mu_{f,x}(1)=1$. The first coordinate of $\psi$ maps $1$ to $L_x^{-1}f^{-1}L_c^{-1}(f(x)c) = x^{-1} f^{-1}(c^{-1}f(x)c)$ and we are done by Lemma \ref{Lm:AtpToPseudo}.
\end{proof}

\begin{proposition}\label{Pr:InducedFromInn}
Let $Q$ be a Moufang loop, $X\unlhd Q$ and $x\in Q$. Let $f\in\inn{Q}$ and let $c$ be a companion of $f$ when $f$ is viewed as a pseudoautomorphism. Then $\mu_{f,x}(X)=X$. Moreover, the companion $x^{-1} f^{-1}(c^{-1}f(x)c)$ of $\mu_{f,x}$ belongs to $X$ if and only if $[f(x),c]$ belongs to $X$.
\end{proposition}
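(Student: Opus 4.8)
The plan is to treat the two assertions separately: the first follows by recognizing $\mu_{f,x}$ as an inner mapping, the second by passing to the quotient $Q/X$. For the claim $\mu_{f,x}(X)=X$ I would avoid the pseudoautomorphism machinery entirely and simply observe that $\mu_{f,x}$ is itself an inner mapping. Indeed, $f\in\inn Q\subseteq\mlt Q$ and $R_x,R_{f(x)}\in\mlt Q$, so $\mu_{f,x}=R_x^{-1}f^{-1}R_{f(x)}f\in\mlt Q$; moreover $f(1)=1$ gives $\mu_{f,x}(1)=R_x^{-1}f^{-1}R_{f(x)}(1)=R_x^{-1}f^{-1}(f(x))=R_x^{-1}(x)=1$, so $\mu_{f,x}$ stabilizes $1$ and therefore lies in $\inn Q$. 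Since $X\unlhd Q$ is by definition invariant under every inner mapping, $\mu_{f,x}(X)=X$ follows at once.

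For the second assertion, write $d=x^{-1}f^{-1}(c^{-1}f(x)c)$ for the companion supplied by Proposition \ref{Pr:InducedPseudo}, and let $\pi\colon Q\to Q/X$ be the natural projection. The key preparatory step is that inner mappings descend along $\pi$: because $\pi$ is a homomorphism we have $\pi L_a=L_{\pi(a)}\pi$ and $\pi R_a=R_{\pi(a)}\pi$, whence any element of $\mlt Q$ fixing $1$ descends termwise, giving $\bar f\in\inn(Q/X)$ with $\pi f=\bar f\pi$, and hence $\pi f^{-1}=\bar f^{-1}\pi$ and $\bar f(\pi(x))=\pi(f(x))$. Projecting $d$ through $\pi$ then yields $\pi(d)=\overline{x}^{-1}\bar f^{-1}(\overline{c}^{-1}\bar f(\overline{x})\,\overline{c})$, where I write $\overline{q}=\pi(q)$.

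It remains to unwind the equation $\pi(d)=\overline{1}$, which is equivalent to $d\in X$. Because $Q/X$ is again a Moufang loop, it is diassociative and has the inverse property, so the manipulations may be carried out inside the group $\langle\bar f(\overline{x}),\overline{c}\rangle$: multiplying by $\overline{x}$ and then applying $\bar f$ turns $\pi(d)=\overline{1}$ into $\overline{c}^{-1}\bar f(\overline{x})\,\overline{c}=\bar f(\overline{x})$, that is, into $\bar f(\overline{x})$ commuting with $\overline{c}$, i.e.\ into $[\bar f(\overline{x}),\overline{c}]=\overline{1}$; since $\pi$ preserves commutators and $\bar f(\overline{x})=\pi(f(x))$, this last condition reads $\pi([f(x),c])=\overline{1}$, namely $[f(x),c]\in X$. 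I expect the main obstacle to be the delicate handling of cosets of $X$ in the absence of associativity; the device that removes it is precisely the descent of $f$ to $Q/X$, which replaces coset membership by honest equalities of elements and makes diassociativity available for the final commutator calculation.
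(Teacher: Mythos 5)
Your proof is correct, and both halves take a genuinely different route from the paper. For $\mu_{f,x}(X)=X$ the paper argues through the pseudoautomorphism structure: it invokes $(c,f)\in\lps{Q}$ and checks the chain of coset equalities $Xf(x)=f(Xx)$ iff $cX\cdot f(x)=cf(Xx)$, the last condition holding because $cf(y)\cdot f(x)=cf(yx)$ for $y\in X$. Your observation that $\mu_{f,x}=R_x^{-1}f^{-1}R_{f(x)}f$ lies in $\mlt{Q}$ and fixes $1$, hence lies in $\inn{Q}$ and so preserves every normal subloop, is shorter, needs no companion, and uses only the same standard fact ($X\unlhd Q$ implies $\phi(X)=X$ for $\phi\in\inn{Q}$) that the paper itself already invokes for $f$. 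For the second assertion the paper again manipulates cosets directly: the companion lies in $X$ iff $c^{-1}f(x)c\in f(xX)$ iff $f(x)c=cf(x)\cdot f(y)$ for some $y\in X$, which is $[f(x),c]\in X$. Your passage to $Q/X$ replaces these coset memberships by honest equalities in the Moufang quotient, where diassociativity of $\langle\bar f(\bar x),\bar c\rangle$ and the fact that $\pi$ commutes with translations (hence with any element of $\mlt{Q}$, giving $\pi f=\bar f\pi$) and with commutators finish the computation. The two arguments are of comparable length; the paper's stays closer to the pseudoautomorphism formalism it is developing, while yours is more elementary and makes the ``if and only if'' transparent as a statement about the quotient loop.
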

\begin{proof}
By Proposition \ref{Pr:InducedPseudo}, $(x^{-1} f^{-1}(c^{-1}f(x)c),\,\mu_{f,x})\in\lps{Q}$. We have $f(X)=X$ since $X\unlhd Q$ and $f\in\inn{Q}$. Hence $\mu_{f,x} = R_x^{-1}f^{-1}R_{f(x)}f$ permutes $X$ iff $Xf(x) = f(Xx)$ iff $c\cdot Xf(x) = cf(Xx)$ iff $cX\cdot f(x) = cf(Xx)$, taking advantage of the normality of $X$ in the last step. The condition $cX\cdot f(x) = cf(Xx)$ holds because $f(X)=X$ and $(c,f)\in\lps{Q}$, that is, $cf(y)\cdot f(x) = cf(yx)$ for every $y\in X$.

The companion $x^{-1} f^{-1}(c^{-1}f(x)c)$ belongs to $X$ iff $c^{-1}f(x)c \in f(xX)$ iff there is $y\in X$ such that $f(x)c = cf(xy) = cf(x) \cdot f(y)$, which is equivalent to $(cf(x))^{-1}f(x)c=[f(x),c]\in X$.
\end{proof}

The condition $[f(x),c]\in X$ of Proposition \ref{Pr:InducedFromInn} certainly holds if $x\in X$ because $[f(x),c] = f(x)^{-1}c^{-1}f(x)c$ and  $f(x) \in X$, $c^{-1}f(x)c\in X$. We therefore have:

\begin{corollary}\label{Cr:I1}
Let $Q$ be a Moufang loop, $X\unlhd Q$ and $x\in X$. Let $f\in\inn{Q}$ and let $c$ be a companion of $f$. Let $\rho$ be the restriction of $\mu_{f,x}$ to $X$. Then $(x^{-1} f^{-1}(c^{-1}f(x)c),\,\rho)\in\lps{X}$.
\end{corollary}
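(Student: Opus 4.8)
The plan is to obtain $(d,\rho)$ by simply restricting to $X$ the pseudoautomorphism of $Q$ that Proposition \ref{Pr:InducedPseudo} has already produced, so essentially no new computation is required. Throughout I abbreviate the companion by $d = x^{-1} f^{-1}(c^{-1}f(x)c)$. First I would invoke Proposition \ref{Pr:InducedPseudo} to conclude that $(d,\mu_{f,x})\in\lps{Q}$; here I use that an inner mapping $f\in\inn{Q}$ of the Moufang loop $Q$ is indeed a pseudoautomorphism with a suitable companion $c$, as recorded in \eqref{Eq:InnPseudo}, so that the hypotheses of Proposition \ref{Pr:InducedPseudo} are met.

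Next I would verify the two facts that make the restriction meaningful. By Proposition \ref{Pr:InducedFromInn} we have $\mu_{f,x}(X)=X$, so $\rho=\mu_{f,x}|_X$ is a genuine permutation of $X$, that is, $\rho\in\sym{X}$. Moreover, Proposition \ref{Pr:InducedFromInn} tells us that the companion $d$ lies in $X$ exactly when $[f(x),c]\in X$, and the remark immediately preceding the corollary shows this holds whenever $x\in X$: in that case $f(x)\in X$ and $c^{-1}f(x)c\in X$ (using $X\unlhd Q$), whence $[f(x),c]=f(x)^{-1}(c^{-1}f(x)c)\in X$. Thus $d\in X$.

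Finally I would restrict the defining identity of a pseudoautomorphism. Since $(d,\mu_{f,x})\in\lps{Q}$, the identity $d\mu_{f,x}(a)\cdot\mu_{f,x}(b)=d\mu_{f,x}(ab)$ holds for all $a,b\in Q$, in particular for all $a,b\in X$. For such $a,b$ every term $d$, $\mu_{f,x}(a)$, $\mu_{f,x}(b)$, $\mu_{f,x}(ab)$ lies in $X$ (using $d\in X$ together with $\mu_{f,x}(X)=X$), and because $X$ is a subloop these products coincide with the products computed inside $X$. Rewriting $\mu_{f,x}$ as $\rho$ on $X$ therefore gives $d\rho(a)\cdot\rho(b)=d\rho(ab)$ for all $a,b\in X$, which is precisely the assertion $(d,\rho)\in\lps{X}$.

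I expect no substantive obstacle beyond this bookkeeping: the content of the corollary is just that a pseudoautomorphism of $Q$ whose companion lies in $X$ and whose permutation stabilizes $X$ restricts to a pseudoautomorphism of the subloop $X$. The only places where the hypotheses are genuinely used are the membership $d\in X$ (which forces $x\in X$ rather than merely $x\in Q$) and the surjectivity of $\rho$ onto $X$, both of which are supplied by Proposition \ref{Pr:InducedFromInn}.
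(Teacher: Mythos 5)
Your proposal is correct and follows essentially the same route as the paper: the paper derives this corollary directly from Proposition \ref{Pr:InducedPseudo} and Proposition \ref{Pr:InducedFromInn}, together with the observation (stated in the paragraph immediately preceding the corollary) that $[f(x),c]\in X$ whenever $x\in X$, so that the companion lies in $X$ and the pseudoautomorphism restricts to one of $X$. Your explicit verification that the restricted identity $d\rho(a)\cdot\rho(b)=d\rho(ab)$ holds for $a,b\in X$ is just the bookkeeping the paper leaves implicit.
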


\begin{corollary}\label{Cr:I2}
Let $Q$ be a Moufang loop, $X$ a normal subgroup of $Q$, $x\in X$ and $f\in\inn{Q}$. Then $\mu_{f,x}$ restricts to an automorphism of $X$.
\end{corollary}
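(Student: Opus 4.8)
The plan is to deduce this from Corollary \ref{Cr:I1}, which has already established that the restriction of $\mu_{f,x}$ to $X$ is a pseudoautomorphism of $X$; the only additional ingredient needed is the elementary fact that a pseudoautomorphism of a \emph{group} is automatically an automorphism.

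First I would check that $\rho = \mu_{f,x}|_X$ is genuinely a permutation of $X$. Since $x\in X\subseteq Q$ and $f\in\inn{Q}$, Proposition \ref{Pr:InducedFromInn} gives $\mu_{f,x}(X)=X$, so $\rho$ is a bijection of $X$; moreover $f(1)=1$ because inner mappings fix the identity, so the remark following \eqref{Eq:Mu} yields $\rho(1)=\mu_{f,x}(1)=1$. Next, because $x\in X$, Corollary \ref{Cr:I1} applies and produces $(d,\rho)\in\lps{X}$, where $d=x^{-1}f^{-1}(c^{-1}f(x)c)\in X$ is the companion. Writing out the defining identity \eqref{Eq:DfPseudo} for this pseudoautomorphism of $X$ gives
\begin{displaymath}
    d\rho(u)\cdot\rho(v)=d\rho(uv)\qquad\text{for all }u,v\in X.
\end{displaymath}
Here is where I would invoke the hypothesis that $X$ is associative: the left-hand side then equals $d(\rho(u)\rho(v))$, so after left-cancelling $d$ (permissible since $X$ is a group) I obtain $\rho(u)\rho(v)=\rho(uv)$ for all $u,v\in X$. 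Thus $\rho$ is a bijective endomorphism of $X$, that is, an automorphism.

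There is essentially no serious obstacle remaining: all the structural content --- that $\mu_{f,x}$ stabilizes $X$ and restricts to a pseudoautomorphism there --- was carried out in Proposition \ref{Pr:InducedFromInn} and Corollary \ref{Cr:I1}. The one genuinely new observation is that associativity forces the companion to cancel, collapsing a pseudoautomorphism into an automorphism; this is precisely why the hypothesis strengthens from ``normal subloop'' in Corollary \ref{Cr:I1} to ``normal subgroup'' here, mirroring the remark preceding Proposition \ref{Pr:InducedPseudo} that $\mu_{f,x}$ is trivial for groups.
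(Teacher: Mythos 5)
Your proof is correct and follows the same route the paper intends: Corollary \ref{Cr:I1} gives $(d,\rho)\in\lps{X}$ with $\rho=\mu_{f,x}|_X$, and since $X$ is a group the companion cancels, so $\rho$ is an automorphism --- exactly the observation the paper records just before Proposition \ref{Pr:InducedPseudo} that every pseudoautomorphism of a group is an automorphism. Nothing further is needed.
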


\begin{remark}
Corollary \ref{Cr:I2} gives a necessary condition for a semiautomorphism $g$ of a group $X$ to be induced from some inner mapping of some Moufang loop $Q$ that contains $X$ as a normal subgroup. Namely, if there exists $x\in X$ such that $R_x^{-1}g^{-1}R_{g(x)}g$ is not an automorphism of $X$, then $g$ is not so induced.
\end{remark}

Note that $R_x\mu_{f,x}R_x^{-1} = f^{-1} R_{f(x)} fR_x^{-1}$. The right hand side is discussed in the next lemma.

\begin{lemma}\label{Lm:Rewrite}
Let $Q$ be a Moufang loop and suppose that $(c,f)\in\lps{Q}$ satisfies $f(c^{-1}) = c^{-1}$. Then
\begin{equation}\label{Eq:Rewrite1}
	L^{-1}_{f^3(x)} f L_{f^2(x)}f^{-1} = f^{-1} R_{f(x)} fR_x^{-1}
\end{equation}
holds for all $x\in Q$ if and only if
\begin{equation}\label{Eq:Rewrite2}
	 (cx)\m (cxc\m \cdot y) = (f^{-3}(x)c)\m \cdot f^{-3}(x)y
\end{equation}
holds for all $x,y \in Q$.
\end{lemma}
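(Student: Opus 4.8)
The plan is to turn the operator identity \eqref{Eq:Rewrite1} into a pointwise identity by evaluating both sides at a generic $w\in Q$, then to carry both sides over to the two sides of \eqref{Eq:Rewrite2} by a bijective change of the free variables, using only the pseudoautomorphism relations for $(c,f)$ together with the inverse property and the Moufang laws \eqref{Eq:M5}, \eqref{Eq:M6}. Since each rewriting is an identity valid for all arguments and each substitution is a bijection of $Q$ (or of $Q\times Q$), the resulting chain of equivalences delivers the biconditional in one sweep.

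First I would record the consequences of the hypothesis. Because $f$ is a semiautomorphism, $f(c^{-1})=f(c)^{-1}$, so the assumption is equivalent to $f(c)=c$. Hence, by \eqref{Eq:LPs}, the inverse of $(c,f)$ in $\lps{Q}$ is $(c^{-1},f^{-1})$, and iterating the group law gives $(c,f)^{\pm 3}=(c^{\pm 3},f^{\pm 3})\in\lps{Q}$ with $f^{\pm 3}(c)=c$. Feeding $a=c$ into the defining identity \eqref{Eq:DfPseudo} of these two pseudoautomorphisms and using $f^{\pm3}(c)=c$ then yields the commuting relations $f^{3}(cb)=cf^{3}(b)$ and $f^{-3}(cb)=cf^{-3}(b)$ for all $b$. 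I would also record the two shapes furnished by Lemma \ref{Lm:MoufPseudo}: applied to $(c,f)$ it gives $f(f^{-1}(u)f^{-1}(v))=uc^{-1}\cdot cv$, and applied to $(c^{-1},f^{-1})$ it gives $f^{-1}(f(u)f(v))=uc\cdot c^{-1}v$; these let me release and absorb $f$.

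Next I would evaluate both sides of \eqref{Eq:Rewrite1} at $w$, writing $L_a^{-1}(u)=a^{-1}u$ and $R_a^{-1}(u)=ua^{-1}$ by the inverse property. The left side is $f^{3}(x)^{-1}\cdot f(f^{2}(x)\cdot f^{-1}(w))$; applying the first shape of Lemma \ref{Lm:MoufPseudo} (with $u=f^3(x)$, $v=w$) rewrites the inner $f$ as $f^{3}(x)c^{-1}\cdot cw$. The right side is $f^{-1}(f(wx^{-1})\cdot f(x))$, which the second shape turns into $(wx^{-1})c\cdot c^{-1}x$. Writing $F(q,y):=q^{-1}(qc^{-1}\cdot y)$, so that $F(q,y)=(c^{-1}q^{-1})(qy)$ by \eqref{Eq:M5}, these computations show that \eqref{Eq:Rewrite1} is equivalent to the pointwise identity
\begin{equation*}
    F(f^{3}(x),\,cw)=(wx^{-1}\cdot c)(c^{-1}x)\qquad(x,w\in Q).
\end{equation*}
In the same vein, since $cxc^{-1}=(cx)c^{-1}$ by diassociativity, the left side of \eqref{Eq:Rewrite2} is $F(cx,y)$, while its right side equals $F(f^{-3}(x),y)$; thus \eqref{Eq:Rewrite2} is equivalent to $F(cx,y)=F(f^{-3}(x),y)$ for all $x,y$.

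Finally I would substitute $x\mapsto cf^{-3}(x)$ and $w\mapsto c^{-1}y$ in the displayed identity; both are bijections, so the quantified statement is unaffected. On the left, the commuting relation gives $f^{3}(cf^{-3}(x))=cf^{3}(f^{-3}(x))=cx$, hence $F(cx,y)$, exactly the left side of \eqref{Eq:Rewrite2}. The crux — and the step I expect to be the main obstacle — is the right side: with $p:=f^{-3}(x)$ one has $cf^{-3}(x)=cp$, and I must show that $((c^{-1}y)(cp)^{-1}c)(c^{-1}(cp))$ collapses to $(pc)^{-1}(py)=F(p,y)$. Using $c^{-1}(cp)=p$ and $(cp)^{-1}c=p^{-1}$ (powers of $c$ associate with everything by diassociativity), followed by one application of \eqref{Eq:M6} and one of \eqref{Eq:M5} to slide $c$ past the general factor, this reduces to $p^{-1}((pc^{-1})y)=F(p,y)$, which is precisely the right side of \eqref{Eq:Rewrite2}. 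This right-hand collapse is exactly where $f(c)=c$ is used (through $f^{-3}(cp)=cf^{-3}(p)$). With it in hand, the chain \eqref{Eq:Rewrite1} $\Leftrightarrow F(f^{3}(x),cw)=(wx^{-1}c)(c^{-1}x)\Leftrightarrow F(cx,y)=F(f^{-3}(x),y)\Leftrightarrow$ \eqref{Eq:Rewrite2} is complete.
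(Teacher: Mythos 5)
Your overall route is sound and in fact mirrors the paper's: evaluate both sides of \eqref{Eq:Rewrite1} pointwise, convert the two occurrences of $f$ via Lemma \ref{Lm:MoufPseudo} applied to $(c,f)$ and to $(c^{-1},f^{-1})$ (legitimate, since $f(c^{-1})=c^{-1}$ gives $(c,f)^{-1}=(c^{-1},f^{-1})$ by \eqref{Eq:LPs}), then pass to \eqref{Eq:Rewrite2} by a bijective change of variables together with Moufang-identity manipulations. The preparatory facts you record ($f(c)=c$, $(c,f)^{3}=(c^{3},f^{3})$, $f^{\pm3}(cb)=cf^{\pm3}(b)$), the two pointwise evaluations, and the identifications of both sides of \eqref{Eq:Rewrite2} with $F(cx,y)$ and $F(f^{-3}(x),y)$ are all correct.

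The gap is in the step you yourself flag as the crux. You justify the collapse of $\bigl((c^{-1}y)(cp)^{-1}\cdot c\bigr)\bigl(c^{-1}(cp)\bigr)$ to $(pc)^{-1}(py)$ by invoking ``$(cp)^{-1}c=p^{-1}$ (powers of $c$ associate with everything by diassociativity).'' That principle is false: diassociativity only makes $\langle c,p\rangle$ a group; it does not let you reassociate the three factors $c^{-1}y$, $(cp)^{-1}$ and $c$, which do not lie in a two-generated subloop. Indeed $(cp)^{-1}c$ is not a subterm of $\bigl((c^{-1}y)(cp)^{-1}\bigr)\cdot c$, and if you were permitted to reassociate so as to make it one, the whole expression would collapse to $\bigl((c^{-1}y)p^{-1}\bigr)p=c^{-1}y$, which is not the claimed answer. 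The identity you need is true, but it takes more than ``one application of \eqref{Eq:M6} and one of \eqref{Eq:M5}'': for instance, first $(As^{-1}\cdot c)(c^{-1}s)=c\bigl((sc)^{-1}\cdot sA\bigr)$ with $A=c^{-1}y$, $s=cp$ (this is \eqref{Eq:M5} followed by \eqref{Eq:M6}, exactly the paper's computation), then $(cp)(c^{-1}y)=c(pc^{-1}\cdot y)$ by \eqref{Eq:M5} again, and finally \eqref{Eq:M4} together with $c(cpc)^{-1}\cdot c=p^{-1}$ (now genuinely a computation inside $\langle c,p\rangle$) to arrive at $p^{-1}(pc^{-1}\cdot y)=(pc)^{-1}(py)$. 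The paper sidesteps this by left-multiplying the pointwise identity by $c^{-1}$ and massaging both sides with \eqref{Eq:M4}--\eqref{Eq:M6} before any substitution; either way, the missing computation has to be supplied, and the justification you give for it would not survive scrutiny as written.
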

\begin{proof}
Note that \eqref{Eq:Rewrite1} holds for all $x\in Q$ if and only if
\begin{equation}\label{Eq:RewriteAux}
	L\m_x f L_{f\m(x)}f\m = f\m R_{f^{-2}(x)}f R_{f^{-3}(x)}^{-1}
\end{equation}
holds for all $x\in Q$. Applying the left hand side of \eqref{Eq:RewriteAux} to $y\in Q$ yields
\begin{displaymath}
	x\m \cdot f(f\m(x)f\m(y)) \stackrel{\ref{Lm:MoufPseudo}}{=}
	x\m (xc\m\cdot cy) \stackrel{\eqref{Eq:M5}}{=}
	x\m (c\m (cx\cdot y)).
\end{displaymath}
Since $f(c^{-1})=c^{-1}$, \eqref{Eq:LPs} yields $(c,f)^{-1} = (f^{-1}(c^{-1}),f^{-1}) = (c^{-1},f^{-1})\in\lps{Q}$. Let $z = f^{-3}(x)$. Applying the right hand side of \eqref{Eq:RewriteAux} to $y$ then yields
\begin{displaymath}
	f\m(f(yz\m)\cdot f(z)) \stackrel{\ref{Lm:MoufPseudo}}{=}
	(yz\m\cdot c)(c\m z) \stackrel{\eqref{Eq:M5}}{=}
	c((c\m \cdot yz\m)z),
\end{displaymath}
where we have applied Lemma \ref{Lm:MoufPseudo} to $(c^{-1},f^{-1})\in\lps{Q}$. Upon multiplying both sides of \eqref{Eq:RewriteAux} by $c^{-1}$ on the left, we therefore see that \eqref{Eq:RewriteAux} holds if and only if the product
\begin{displaymath}
	c\m(x\m(c\m(cx\cdot y)) \stackrel{\eqref{Eq:M4}}{=}
	c\m(cx)\m \cdot (cx\cdot y) \stackrel{\eqref{Eq:M5}}{=}
	(cx)\m (cxc\m \cdot y)
\end{displaymath}
is equal to
\begin{displaymath}
	(c\m \cdot yz\m)z \stackrel{\eqref{Eq:M6}}{=}
	 c\m z\m\cdot zy = (zc)\m zy,
\end{displaymath}
and we are through.
\end{proof}

\begin{proposition}\label{Pr:InducedFromT}
Let $Q$ be a Moufang loop and $a\in Q$. If $f = T_a$ then \eqref{Eq:Rewrite1} holds for every $x\in Q$.
\end{proposition}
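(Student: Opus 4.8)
The plan is to invoke Lemma \ref{Lm:Rewrite}, which (under the hypothesis $f(c^{-1})=c^{-1}$ for a companion $c$ of $f$) reduces the desired identity \eqref{Eq:Rewrite1} to the identity \eqref{Eq:Rewrite2}. So the first task is to choose a companion for $f=T_a$ and verify the standing hypothesis, after which \eqref{Eq:Rewrite2} should collapse almost immediately.

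For the companion I would take $c=a^{-3}$, which is legitimate because $(a^{-3},T_a)\in\lps{Q}$ by \eqref{Eq:InnPseudo}. To apply Lemma \ref{Lm:Rewrite} I must check $f(c^{-1})=c^{-1}$, i.e.\ $T_a(a^3)=a^3$; this is immediate from power associativity, since $T_a(a^3)=(a\cdot a^3)a^{-1}=a^3$.

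Next I would unwind the right-hand side of \eqref{Eq:Rewrite2}. Using $T_a^k=T_{a^k}$ (as in the proof of Proposition \ref{Pr:TKey}) gives $f^{-3}=T_a^{-3}=T_{a^{-3}}=T_c$, and diassociativity then yields $f^{-3}(x)=T_c(x)=cxc^{-1}$ for every $x$. The point is that $c$ and $x$ generate a group, in which $f^{-3}(x)\cdot c=cxc^{-1}\cdot c=cx$. Substituting this, the right-hand side $(f^{-3}(x)c)^{-1}\cdot f^{-3}(x)y$ of \eqref{Eq:Rewrite2} becomes exactly $(cx)^{-1}(cxc^{-1}\cdot y)$, which is verbatim its left-hand side. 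Hence \eqref{Eq:Rewrite2} holds for all $x,y\in Q$, and Lemma \ref{Lm:Rewrite} then delivers \eqref{Eq:Rewrite1}.

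I do not expect a genuine obstacle here: once the companion $c=a^{-3}$ is fixed, the only substantive observation is the identification $f^{-3}(x)=cxc^{-1}$, and it is precisely the cube hidden in the companion $a^{-3}$ that matches the exponents $3$ and $-3$ built into \eqref{Eq:Rewrite1}. After that alignment \eqref{Eq:Rewrite2} is a tautology, the real content of the proposition having already been absorbed into Lemma \ref{Lm:Rewrite} and the companion formula \eqref{Eq:InnPseudo}.
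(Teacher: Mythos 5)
Your proposal is correct and follows essentially the same route as the paper: take the companion $c=a^{-3}$ from \eqref{Eq:InnPseudo}, check $T_a(a^3)=a^3$ by power associativity, identify $f^{-3}(x)=T_{a^{-3}}(x)=cxc^{-1}$, and observe that \eqref{Eq:Rewrite2} then reduces to a tautology via diassociativity of $\langle c,x\rangle$. Nothing to add.
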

\begin{proof}
The companion of $T_a$ is equal to $c=a^{-3}$. Certainly, $T_a(c^{-1}) = c^{-1}$ by power associativity. Let $z = f^{-3}(x) = T_a^{-3}(x) = a^{-3}xa^3 =cxc\m$. By Lemma \ref{Lm:Rewrite}, it suffices to verify \eqref{Eq:Rewrite2}. Now, $(cx)\m(cxc\m\cdot y) = x\m c\m\cdot zy = (zc)^{-1}\cdot zy$.
\end{proof}

\section{Inducing from a semiautomorphism of an abelian group}\label{Sc:Semi}

We now study the situation when $f$ is a semiautomorphism of an abelian group $(X,+)$ and $\mu_{f,x}$ of \eqref{Eq:Mu} is an automorphism of $(X,+)$.

\begin{proposition}\label{Pr:Semi}
Let $f$ be a semiautomorphism of an abelian group $(X,+)$ such that for every $x\in X$ the permutation
\begin{displaymath}
    \mu_x = \mu_{f,x} = R_x\m f\m  R_{f(x)} f
\end{displaymath}
is an automorphism of $(X,+)$. Let
\begin{displaymath}
    x\oplus y = f\m (f(x)+f(y))
\end{displaymath}
for all $x,y \in X$. Then:
\begin{enumerate}
\item[(i)] $f$ is an isomorphism from $(X,\oplus)$ onto $(X,+)$, and $(X,\oplus)$ is an abelian group with identity element $0$ and inverses $\ominus x = -x$,
\item[(ii)] $x\oplus y = \mu_y(x) + y = x + \mu_x(y)$ for all $x,y\in X$,
\item[(iii)] $\mu_x(x)=x$, $\mu_x R_x = R_x\mu_x$ and $\mu_x = f\m R_{f(x)} f R_x\m$ for all $x\in X$,
\item[(iv)] $x+y = x\oplus\mu_x(y)$ for all $x,y\in X$,
\item[(v)] $\mu_x=\mu_x\m = \mu_{-x}$ and $\mu_x\mu_y = \mu_{x\oplus y}$ for all $x,y \in X$,
\item[(vi)] the mapping $x\mapsto \mu_x$ is an action of $(X,+)$ on $X$ if and only if $\mu_x\in \aut{X,\oplus}$ for every $x\in X$.
\end{enumerate}
\end{proposition}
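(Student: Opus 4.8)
The plan is to transport the abelian group structure of $(X,+)$ across $f$ and then reduce every assertion to a short computation involving the operation $\oplus$, the permutations $\mu_x$, and the symmetric mapping $\beta(x,y)=(x\oplus y)-x-y$, which is precisely the mapping \eqref{Eq:BetaDef} associated with $f$. For (i), I would note that $f(x\oplus y)=f(x)+f(y)$ holds directly by the definition of $\oplus$, so $f$ is a bijective homomorphism $(X,\oplus)\to(X,+)$; transporting the group structure makes $(X,\oplus)$ an abelian group. Its identity is $f^{-1}(0)=0$ and its inverses are $f^{-1}(-f(x))=-x$, where I use $f(0)=0$ and $f(-x)=-f(x)$, both instances of the identity $f(x^i)=f(x)^i$ valid for semiautomorphisms of power associative loops.

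Next I would unwind \eqref{Eq:Mu} to get $\mu_x(y)=f^{-1}(f(y)+f(x))-x=(x\oplus y)-x$, which is (ii) once rewritten as $x\oplus y=x+\mu_x(y)=\mu_y(x)+y$. Writing $\beta(x,y)=(x\oplus y)-x-y$, so that $\mu_x(y)=y+\beta(x,y)$, I record the properties of $\beta$ that drive everything afterward: it is \emph{symmetric} since $\oplus$ is commutative; it is \emph{alternating} since $x\oplus x=f^{-1}(2f(x))=2x$; it is \emph{biadditive} since each $\mu_x$ is by hypothesis an automorphism of $(X,+)$; and since alternating biadditive forces antisymmetry, combining this with symmetry yields $2\beta=0$. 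Part (iii) is then immediate: $\mu_x(x)=(x\oplus x)-x=x$; because $\mu_x$ is additive and fixes $x$ it commutes with $R_x$; and the third identity $\mu_x=f^{-1}R_{f(x)}fR_x^{-1}$ follows by rewriting $\mu_x=R_x^{-1}(f^{-1}R_{f(x)}f)$ and applying $R_x\mu_xR_x^{-1}=\mu_x$.

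For (v) the cleanest route stays inside $(X,\oplus)$. The automorphism condition for $\mu_x$ reads $x\oplus(p+q)=(x\oplus p)+(x\oplus q)-x$, and combined with $x\oplus 0=x$ it gives $(x\oplus y)+(x\oplus(-y))=2x$; a direct manipulation then yields $\mu_x\mu_y(z)=(x\oplus y\oplus z)-(x\oplus y)=\mu_{x\oplus y}(z)$, so $\mu_x\mu_y=\mu_{x\oplus y}$. Moreover $\mu_x(y)-\mu_{-x}(y)=2\beta(x,y)=0$ gives $\mu_x=\mu_{-x}$, while solving $\mu_x(y)=w$ inside the group $(X,\oplus)$ and using $x\oplus(-x)=0$ gives $\mu_x^{-1}=\mu_{-x}$; together these give $\mu_x=\mu_x^{-1}=\mu_{-x}$. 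Part (iv) then drops out: by (ii) and $\mu_x^2=\mathrm{id}$ we get $x\oplus\mu_x(y)=x+\mu_x^2(y)=x+y$.

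The main work, and the step I expect to be the obstacle, is (vi). Here it helps to observe that $\beta$ is also \emph{associative}, $\beta(\beta(x,y),z)=\beta(x,\beta(y,z))$, which is forced by the associativity of $\oplus$, so that $(X,+,\beta)$ is a commutative non-unital ring with $\beta(x,x)=0$ and $2\beta=0$. On one side, $x\mapsto\mu_x$ is an action of $(X,+)$ exactly when $\mu_{x+y}=\mu_x\mu_y=\mu_{x\oplus y}$ for all $x,y$; comparing $\mu_{x+y}(z)=z+\beta(x,z)+\beta(y,z)$ with $\mu_{x\oplus y}(z)=z+\beta(x,z)+\beta(y,z)+\beta(\beta(x,y),z)$ shows this holds iff $\beta(\beta(x,y),z)=0$ for all $x,y,z$. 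On the other side, expanding $\mu_a(u\oplus v)$ and $\mu_a(u)\oplus\mu_a(v)$ in the ring, the quadratic-in-$a$ term $\beta(\beta(a,u),\beta(a,v))$ vanishes because $\beta(a,a)=0$, leaving $\mu_a(u\oplus v)-\bigl(\mu_a(u)\oplus\mu_a(v)\bigr)=\beta(\beta(a,u),v)$; hence $\mu_a\in\aut{X,\oplus}$ for all $a$ iff $\beta(\beta(a,u),v)=0$ for all $a,u,v$. Both conditions thus reduce to the vanishing of the triple product $\beta(\beta(\cdot,\cdot),\cdot)$, so they are equivalent, giving (vi). The one delicate point is exactly this cancellation of the quadratic term via $\beta(a,a)=0$ and associativity; without it the two sides of the automorphism condition appear genuinely different, and it is what makes the equivalence in (vi) clean rather than accidental.
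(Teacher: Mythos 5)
Your proof is correct, but it departs from the paper's argument in two substantive ways. First, you reverse the logical order of (iv) and (v): the paper proves (iv) directly from the semiautomorphism identity, via $f(\mu_x(y))=f(y-x)+f(x)$ and $f(x)+f(y-x)+f(x)=f(x+(y-x)+x)=f(x+y)$, and only then deduces $\mu_x=\mu_x^{-1}$; you instead establish $\mu_x\mu_y=\mu_{x\oplus y}$ straight from the explicit formula $\mu_x(y)=(x\oplus y)-x$ together with associativity of $\oplus$ and additivity of $\mu_x$, get $\mu_x=\mu_{-x}$ from $2\beta=0$ and $\mu_x^{-1}=\mu_{-x}$ from $\mu_x\mu_{-x}=\mu_{x\oplus(-x)}=\mu_0=\id{X}$, and then (iv) drops out as $x\oplus\mu_x(y)=x+\mu_x^2(y)=x+y$. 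This is valid and has the mildly interesting consequence that (iv) needs only $f(kx)=kf(x)$ for $k\in\{-1,0,1,2\}$ rather than the full semiautomorphism law, which is where the paper spends it. Second, for (vi) the paper derives the identity $\mu_x(y\oplus\mu_y(z))=\mu_x(y)\oplus\mu_{x+y}(z)$, which always holds, and compares it with the automorphism condition to cancel down to $\mu_x\mu_y=\mu_{x+y}$; you instead observe that associativity of $\oplus$ forces $\beta(\beta(x,y),z)=\beta(x,\beta(y,z))$, so $(X,+,\beta)$ is a commutative associative ring, and you reduce \emph{both} sides of the equivalence to the vanishing of $\beta(\beta(x,y),z)$ -- the action condition because $\mu_{x+y}$ and $\mu_{x\oplus y}=\mu_x\mu_y$ differ by $\beta(\beta(x,y),\cdot)$, and the automorphism condition because the quadratic term $\beta(\beta(a,u),\beta(a,v))=a^2uv$ dies on account of $\beta(a,a)=0$. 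Your computations check out (the three linear terms $a(uv)$, $u(av)$, $(au)v$ coincide by commutativity and associativity and sum to $\beta(\beta(a,u),v)$ since $2\beta=0$). What your route buys is a conceptual explanation of why condition \eqref{Eq:BetaRad} is exactly the right hypothesis in Definition \ref{Df:MoufangPerm}; what the paper's route buys is a shorter, purely formal cancellation argument that never needs the associativity of $\beta$.
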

\begin{proof}
(i) The definition of $\oplus$ says that $f$ is an isomorphism from the groupoid $(X,\oplus)$ onto $(X,+)$. In particular, $(X,\oplus)$ must be an abelian group. Since $f$ is a semiautomorphism of $(X,+)$, we have $f(kx) = kf(x)$ for every integer $k$. Hence $f(0)=0$ is the identity element of $(X,\oplus)$, and $x\oplus(-x) = f^{-1}(f(x)+f(-x)) = f^{-1}(f(x)-f(x))= f^{-1}(0)=0$ shows that $-x$ is the inverse of $x$ in $(X,\oplus)$.

(ii) By definition, $x\oplus y = f^{-1}(f(y)+f(x)) = f^{-1}R_{f(y)}f(x) = R_yR_y^{-1}f^{-1}R_{f(y)}f(x) = R_y(\mu_y(x)) = \mu_y(x)+y$ and therefore also $x\oplus y = y\oplus x = \mu_x(y)+x$ by commutativity of $\oplus$.

(iii) By (ii), $\mu_x(x)+x = x\oplus x = f^{-1}(2f(x)) = f^{-1}(f(2x)) = 2x$, so $\mu_x(x)=x$. Then $\mu_x(y)+x = \mu_x(y)+\mu_x(x) = \mu_x(y+x)$, which says $R_x\mu_x = \mu_xR_x$, and this in turn implies $\mu_x = R_x\mu_xR_x^{-1} = f\m R_{f(x)} f R_x\m$.

(iv) We have $f(x\oplus\mu_x(y)) = f(x)+f(\mu_x(y))$ since $f$ is an isomorphism $(X\oplus)\to(X,+)$. By (iv), $f(\mu_x(y)) = ff\m R_{f(x)} f R_x\m(y) = f(y-x)+f(x)$. Combining, $f(x\oplus\mu_x(y)) = f(x)+f(y-x)+f(x) = f(x+(y-x)+x) = f(x+y)$ since $f$ is a semiautomorphism of $(X,+)$. Hence $x+y = x\oplus\mu_x(y)$.

(v) By (iv) and (ii), $x\oplus\mu_x(y) = x+y = x + \mu_x(\mu_x^{-1}(y)) = x\oplus \mu_x^{-1}(y)$ and so $\mu_x = \mu_x^{-1}$. We then calculate $x+\mu_x(y)+\mu_{x\oplus y}(z) = (x\oplus y)+\mu_{x\oplus y}(z) = (x\oplus y)\oplus z = x\oplus(y\oplus z) = x+\mu_x(y\oplus z) = x+\mu_x(y+\mu_y(z)) = x+\mu_x(y)+\mu_x(\mu_y(z))$, where we used $\mu_x\in\aut{X,+}$ in the last step. We deduce $\mu_x\mu_y = \mu_{x\oplus y}$. Using this and (i), we have $\mu_x\mu_{-x} = \mu_{x\oplus(-x)} = \mu_0 = \id{X}$.

(vi) Recall $x+y = x\oplus\mu_x(y)$ from (iv). Then $x\oplus\mu_x(y\oplus\mu_y(z)) = x\oplus\mu_x(y+z) = x+(y+z) = (x+y)+z = (x+y)\oplus\mu_{x+y}(z) = x\oplus\mu_x(y)\oplus\mu_{x+y}(z)$ and therefore $\mu_x(y\oplus\mu_y(z)) = \mu_x(y)\oplus\mu_{x+y}(z)$ always holds. Note that $\mu_x\in\aut{X,\oplus}$ iff $\mu_x(y\oplus\mu_y(z)) = \mu_x(y)\oplus\mu_x\mu_y(z)$. Suppose that $\mu_x\in\aut{X,\oplus}$, so $\mu_x(y)\oplus\mu_x\mu_y(z) = \mu_x(y\oplus\mu_y(z)) = \mu_x(y)\oplus\mu_{x+y}(z)$, which implies $\mu_x\mu_y = \mu_{x+y}$. Conversely, if $\mu_x\mu_y=\mu_{x+y}$ then
$\mu_x(y)\oplus\mu_x\mu_y(z)= \mu_x(y)\oplus\mu_{x+y}(z) = \mu_x(y\oplus\mu_y(z))$ and $\mu_x\in\aut{X,\oplus}$ follows.
\end{proof}

\begin{corollary}\label{Cr:Kernel}
Under the assumptions of Proposition $\ref{Pr:Semi}$, if $\mu:x\mapsto\mu_x$ is an action of $(X,+)$ on $X$ then $\mu_y(x) - x \in \ker{\mu}$ for all $x,y\in X$.
\end{corollary}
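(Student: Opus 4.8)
The plan is to unwind the definition of the kernel and verify the asserted membership directly. Writing $\ker{\mu} = \{z\in X : \mu_z = \id{X}\}$, the goal reduces to the single identity $\mu_{\mu_y(x)-x} = \id{X}$.

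First I would assemble the two homomorphism properties that $\mu$ enjoys under the present hypotheses. On one hand, Proposition \ref{Pr:Semi}(v) gives, without invoking the action hypothesis, $\mu_{u\oplus v} = \mu_u\mu_v$ and $\mu_{-v} = \mu_v^{-1}$ for all $u,v\in X$. On the other hand, the standing assumption that $\mu\colon x\mapsto\mu_x$ is an action of $(X,+)$ means precisely that $\mu$ is a homomorphism with respect to $+$, i.e.\ $\mu_{u+v} = \mu_u\mu_v$. Comparing the two expressions yields $\mu_{u\oplus v} = \mu_{u+v}$ for all $u,v$.

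The key step is to feed the formula $u\oplus v = u+\mu_u(v)$ of Proposition \ref{Pr:Semi}(ii) into this comparison. This gives $\mu_{u+\mu_u(v)} = \mu_{u+v}$, and applying the $+$-homomorphism property to both sides produces $\mu_u\mu_{\mu_u(v)} = \mu_u\mu_v$. Since $\mu_u$ is invertible, left cancellation yields the identity $\mu_{\mu_u(v)} = \mu_v$, valid for all $u,v\in X$; informally, $\mu$ cannot distinguish $v$ from $\mu_u(v)$.

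Finally I would specialize $u=y$, $v=x$ and compute, using the $+$-homomorphism property once more together with $\mu_{-x}=\mu_x^{-1}$ from part (v),
\[
\mu_{\mu_y(x)-x} = \mu_{\mu_y(x)}\,\mu_{-x} = \mu_x\,\mu_x^{-1} = \id{X},
\]
where the middle equality substitutes the identity $\mu_{\mu_y(x)}=\mu_x$ just obtained. Hence $\mu_y(x)-x\in\ker{\mu}$. There is no genuine obstacle here; the only point that repays care is recognizing that the action hypothesis is exactly what upgrades the always-available $\oplus$-homomorphism property of part (v) into a $+$-homomorphism property, so that comparing the two against $u\oplus v = u+\mu_u(v)$ forces $\mu_{\mu_u(v)}=\mu_v$.
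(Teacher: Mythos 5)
Your proof is correct and uses the same ingredients as the paper's: comparing $\mu_{u\oplus v}=\mu_u\mu_v$ from Proposition \ref{Pr:Semi}(v) with the $+$-homomorphism property of the action, and then invoking $u\oplus v = u+\mu_u(v)$ from part (ii). The paper arrives at the conclusion marginally more directly, by computing $\id{X}=\mu_{x\oplus y}\mu_{x+y}^{-1}=\mu_{(x\oplus y)-(x+y)}$ and simplifying $(x\oplus y)-(x+y)=\mu_y(x)-x$, but this is the same argument rearranged.
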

\begin{proof}
If $\mu$ is an action of $(X,+)$ on $X$ then $\mu_{x+y} = \mu_x\mu_y = \mu_{x\oplus y}$ by Proposition \ref{Pr:Semi}. Therefore $\id{X} = \mu_{x\oplus y}\mu\m_{x+y} = \mu_{(x\oplus y)-(x+y)}$. By Proposition \ref{Pr:Semi}(ii), $(x\oplus y)-(x+y) = \mu_y(x)+y-x-y= \mu_y(x)-x$ then lies in the kernel of $\mu$.
\end{proof}

The next statement uses both $L_x$ and $R_x$ to emphasize the connection to Lemma \ref{Lm:Rewrite}. Of course, $L_x = R_x$ for each $x\in X$ since $(X,+)$ is assumed to be commutative.

\begin{lemma}\label{Lm:Auto}
Under the assumptions of Proposition $\ref{Pr:Semi}$, if $L_{f^3(x)}\m f L_{f^2(x)} f\m = f\m R_{f(x)}fR_x\m$ for every $x\in X$ then $f\m \mu_x f = \mu_{f^2(x)}$ and $\mu_x\in
\aut{X,\oplus}$ for every $x\in X$.
\end{lemma}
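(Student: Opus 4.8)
The statement bundles two claims: the conjugation formula $f\m\mu_x f=\mu_{f^2(x)}$ and the membership $\mu_x\in\aut{X,\oplus}$. The plan is to prove the conjugation formula first and then obtain membership in $\aut{X,\oplus}$ as a short corollary of it, rather than routing through the action criterion of Proposition \ref{Pr:Semi}(vi) (which would force me to verify $\mu_{x+y}=\mu_x\mu_y$, a noticeably more tangled computation).

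For the first claim the hypothesis is the crucial input. Since $(X,+)$ is abelian we have $L_x=R_x$, and by Proposition \ref{Pr:Semi}(iii) the right-hand side $f\m R_{f(x)}fR_x\m$ equals $\mu_x$; hence the hypothesis reads $\mu_x=R_{f^3(x)}\m f R_{f^2(x)}f\m$, a second operator description of $\mu_x$. I would conjugate this by $f$ to get $f\m\mu_x f=f\m R_{f^3(x)}\m f R_{f^2(x)}$ and evaluate at an arbitrary $z\in X$. Writing $w=f^2(x)$ and using that $f$ is a semiautomorphism (so $f(-w)=-f(w)$ and $f^3(x)=f(w)$), the result simplifies to $(z+w)\oplus(-w)$. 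On the other side, $\mu_{f^2(x)}(z)=\mu_w(z)=(z\oplus w)-w$ directly from the definition of $\mu_w$. Both expressions collapse to $\mu_w(z)$: the second by Proposition \ref{Pr:Semi}(ii), and the first because $\mu_w$ is additive (the standing hypothesis of Proposition \ref{Pr:Semi}), fixes $w$ by Proposition \ref{Pr:Semi}(iii), and satisfies $\mu_{-w}=\mu_w$ by Proposition \ref{Pr:Semi}(v). This establishes $f\m\mu_x f=\mu_{f^2(x)}$.

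For the second claim I would feed the conjugation formula back into itself. Replacing $x$ by $f^{-2}(x)$ in $f\m\mu_x f=\mu_{f^2(x)}$ gives $f\mu_x f\m=\mu_{f^{-2}(x)}$. By the standing assumption of Proposition \ref{Pr:Semi}, $\mu_{f^{-2}(x)}$ is an automorphism of $(X,+)$; and since $f$ is an isomorphism $(X,\oplus)\to(X,+)$ by Proposition \ref{Pr:Semi}(i), transporting an element of $\aut{X,+}$ back through $f$ lands in $\aut{X,\oplus}$. Concretely, $\mu_x=f\m(f\mu_x f\m)f=f\m\mu_{f^{-2}(x)}f\in\aut{X,\oplus}$, which is exactly what is wanted.

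The only genuine obstacle lies in the first claim: one must translate the operator identity carefully between the two group structures $+$ and $\oplus$ and notice that the additivity of $\mu_w$ with respect to $+$ is precisely the bridge that makes $(z+w)\oplus(-w)$ collapse to $\mu_w(z)$. Once the conjugation formula is in hand, the membership $\mu_x\in\aut{X,\oplus}$ is essentially formal, being nothing more than transport of an automorphism of $(X,+)$ across the isomorphism $f$.
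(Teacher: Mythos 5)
Your proof is correct, and its overall architecture matches the paper's: both identify the right-hand side of the hypothesis with $\mu_x$ via Proposition \ref{Pr:Semi}(iii), then show that the left-hand side conjugated by $f$ equals $\mu_{f^2(x)}$, and finally transport an automorphism of $(X,+)$ across the isomorphism $f:(X,\oplus)\to(X,+)$ to conclude $\mu_x\in\aut{X,\oplus}$ (the paper reads this off as $\mu_{f^2(x)}=f\m\mu_x f$ and uses surjectivity of $f^2$; your re-indexing with $f^{-2}$ is the same step made explicit). The one place where you genuinely diverge is the middle step. The paper stays at the operator level: it applies the semiautomorphism identity \eqref{Eq:SemiEquiv} with $f^2(x)$ in place of $x$ to convert $L_{f^3(x)}\m f L_{f^2(x)}f\m$ into $R_{f^3(x)}fR_{f^2(x)}\m f\m$, which is $f\mu_{f^2(x)}f\m$ by Proposition \ref{Pr:Semi}(iii) again --- two lines and no elements. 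You instead evaluate $f\m R_{f^3(x)}\m fR_{f^2(x)}$ at a point $z$, land on $(z+w)\oplus(-w)$ with $w=f^2(x)$, and collapse it to $\mu_w(z)$ using (ii), (iii), (v) and the additivity of $\mu_w$; this computation is correct (and can be shortened via (iv): $z+w=w\oplus\mu_w(z)$, so $(z+w)\oplus(-w)=\mu_w(z)$ at once). The paper's route buys brevity and makes visible that \eqref{Eq:SemiEquiv} is precisely the bridge between the two operator descriptions of $\mu_x$; yours buys independence from \eqref{Eq:SemiEquiv}, letting the semiautomorphism law enter only through the already-established parts of Proposition \ref{Pr:Semi} together with $f(-w)=-f(w)$. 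Your stated worry about Proposition \ref{Pr:Semi}(vi) is moot: the paper's proof of this lemma does not route through it either.
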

\begin{proof}
By Proposition \ref{Pr:Semi}(iii), we have $L_{f^3(x)}\m f L_{f^2(x)} f\m = f\m R_{f(x)}fR_x\m = \mu_x$. By \eqref{Eq:SemiEquiv}, $L_{f^3(x)}\m f L_{f^2(x)} f\m = R_{f^3(x)}f R_{f^2(x)}\m f\m$. The right hand side is equal to $f\mu_{f^2(x)}f\m$ by Proposition \ref{Pr:Semi}(iii). Altogether, $\mu_x = f\mu_{f^2(x)} f\m$ for every $x\in X$. Since $f$ is an isomorphism $(X,\oplus)\to(X,+)$ and $\mu_x\in \aut{X,+}$, we have $\mu_{f^2(x)}= f\m \mu_x f\in \aut{X,\oplus}$.
\end{proof}

\begin{proposition}\label{Pr:Beta}
Let $(X,+)$ be a normal abelian subgroup of a Moufang loop $(Q,\cdot)$. Let $a\in Q$ and let $f$ be the restriction of $T_a$ to $X$. Then for every $x\in X$ the permutation $\mu_{f,x} = \mu_x$ defined by \eqref{Eq:Mu} is an automorphism of $(X,+)$ and $\mu:x\mapsto\mu_x$ is an action of $(X,+)$ on $X$. Furthermore, $f$ is a Moufang permutation on $(X,+)$, cf.~Definition $\ref{Df:MoufangPerm}$.
\end{proposition}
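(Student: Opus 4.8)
The plan is to reduce everything to the abstract machinery of Section \ref{Sc:Semi} and then verify the three defining conditions of a Moufang permutation. First I would note that $f=T_a|_X$ is a semiautomorphism of $(X,+)$: since $T_a=R_a^{-1}L_a$ is a pseudoautomorphism of $Q$ with companion $a^{-3}$ by \eqref{Eq:InnPseudo}, it is a semiautomorphism of $Q$, and as $X\unlhd Q$ is a subloop with $T_a(X)=X$, the identity \eqref{Eq:Semi} restricts to $X$. Because $T_a\in\inn{Q}$ and $x\in X$, Corollary \ref{Cr:I2} shows that $\mu_{T_a,x}$ restricts to an automorphism of $X$, and this restriction is exactly the map $\mu_x=\mu_{f,x}$ of \eqref{Eq:Mu} computed inside $X$. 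Thus the hypotheses of Proposition \ref{Pr:Semi} hold, which settles the first assertion and puts at my disposal the operation $x\oplus y=f^{-1}(f(x)+f(y))$ together with the formulas $\beta(x,y)=(x\oplus y)-x-y=\mu_x(y)-y=\mu_y(x)-x$.

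For the action claim I would invoke Proposition \ref{Pr:InducedFromT}, which gives the rewriting identity \eqref{Eq:Rewrite1} for $f=T_a$ on all of $Q$; restricting it to $X$ (every translation appearing is by some $T_a^k(x)\in X$) verifies the hypothesis of Lemma \ref{Lm:Auto}. That lemma then yields both the conjugation formula $f^{-1}\mu_x f=\mu_{f^2(x)}$ and $\mu_x\in\aut{X,\oplus}$ for every $x$, whence Proposition \ref{Pr:Semi}(vi) gives that $\mu$ is an action of $(X,+)$ on $X$. Now Corollary \ref{Cr:Kernel} applies and tells me $\beta(x,y)=\mu_y(x)-x\in\ker{\mu}$; since $w\in\ker{\mu}$ is equivalent to $\beta(w,\cdot)\equiv 0$, i.e. $\ker{\mu}=\rad{\beta}$, I conclude $\img{\beta}\subseteq\rad{\beta}$.

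It remains to check (P1)--(P3). Condition (P1) is routine: symmetry of $\beta$ is commutativity of $\oplus$, the alternating property follows from $x\oplus x=f^{-1}(2f(x))=2x$, and biadditivity is immediate from $\beta(x,y)=\mu_x(y)-y$ together with $\mu_x\in\aut{X,+}$. Condition (P2) is precisely the inclusion $\img{\beta}\subseteq\rad{\beta}$ secured above. The crux is (P3). Here I would first record the defining relation $f(x+y+\beta(x,y))=f(x)+f(y)$ and deduce from $\img{\beta}\subseteq\rad{\beta}$ that $f$ is additive on any sum in which one summand lies in $\rad{\beta}$ (as in the proof of Lemma \ref{Lm:PairRadImg}(iv), since $\beta(x,y)=0$ when $y\in\rad{\beta}$). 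Applying the conjugation formula in the form $\mu_x f=f\mu_{f^2(x)}$ to an arbitrary $v$, expanding both sides by $\mu_u(w)=w+\beta(u,w)$, and using this partial additivity to split $f(v+\beta(f^2(x),v))=f(v)+f(\beta(f^2(x),v))$, the leading terms $f(v)$ cancel and I am left with the clean identity $\beta(x,f(v))=f(\beta(f^2(x),v))$. Substituting $x\mapsto f(x)$ then produces $\beta(f(x),f(y))=f(\beta(f^3(x),y))$, which is exactly (P3).

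I expect the main obstacle to be (P3): the bookkeeping of combining the conjugation formula $\mu_x f=f\mu_{f^2(x)}$ from Lemma \ref{Lm:Auto} with the fact that $f$ respects addition only when one summand lies in $\rad{\beta}$. Everything hinges on first establishing that $\mu$ is genuinely an action, so that Corollary \ref{Cr:Kernel} forces $\img{\beta}\subseteq\rad{\beta}$, and on the conjugation identity itself, without which the two sides of (P3) cannot be matched.
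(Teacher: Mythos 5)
Your proposal is correct and follows essentially the same route as the paper's own proof: Corollary \ref{Cr:I2} for the automorphism claim, Proposition \ref{Pr:InducedFromT} together with Lemma \ref{Lm:Auto} and Proposition \ref{Pr:Semi}(vi) for the action, Corollary \ref{Cr:Kernel} for \eqref{Eq:BetaRad}, and the conjugation identity $\mu_x f = f\mu_{f^2(x)}$ for \eqref{Eq:BetaProp}. Your explicit verification that $f$ is a semiautomorphism of $(X,+)$ is a detail the paper leaves implicit, and is a welcome addition.
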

\begin{proof}
The mappings $\mu_x$ are automorphisms of $(X,+)$ by Corollary \ref{Cr:I2}. All assumptions of Proposition \ref{Pr:Semi} are therefore satisfied. By Proposition \ref{Pr:InducedFromT}, all assumptions of Lemma \ref{Lm:Auto} are also satisfied. Hence $\mu$ is an action of $(X,+)$ on $X$ by Lemma \ref{Lm:Auto} and Proposition \ref{Pr:Semi}(vi).

Let $\beta$ be defined as in \eqref{Eq:BetaDef}, that is, $\beta(x,y) = f^{-1}(f(x)+f(y))-x-y$. Obviously, $\beta$ is symmetric. Let $x\oplus y = f^{-1}(f(x)+f(y))$ be as in Proposition \ref{Pr:Semi} and recall that $x\oplus y = \mu_x(y)+x$ so that $\beta(x,y) = x\oplus y - x - y = \mu_x(y) - y$. Since $\mu_x\in\aut{X,+}$, $\beta$ is biadditive. Finally, $\mu_x(x)=x$ of Proposition \ref{Pr:Semi}(iii) implies that $\beta$ is alternating.

We have $\beta(\beta(x,y),z) = \mu_{\beta(x,y)}(z)-z = \mu_{\mu_x(y)-y}(z)-z$. By Corollary \ref{Cr:Kernel}, $\mu_x(y)-y$ is in the kernel of $\mu$, so $\beta(\beta(x,y),z) = 0$, which is \eqref{Eq:BetaRad}.

Rewriting $\beta(\beta(x,y),z)=0$ from the definition yields $f(\beta(x,y)+z) = f(\beta(x,y))+f(z)$. By Lemma \ref{Lm:Auto}, $\mu_x f = f\mu_{f^2(x)}$ for all $x\in X$. Therefore $\beta(x,f(y)) + f(y) = \mu_xf(y) = f\mu_{f^2(x)}(y) = f(\beta(f^2(x),y) + y) = f(\beta(f^2(x),y)) + f(y)$. Substituting $f(x)$ for $x$ then yields $\beta(f(x),f(y)) = f(\beta(f^3(x),y))$, which is \eqref{Eq:BetaProp}. We have proved that $f$ is a Moufang permutation on $(X,+)$.
\end{proof}

\section{Moufang permutations and their properties}\label{Sc:MPP}

In this section we first investigate properties of Moufang permutations on an abelian group $(X,+)$. We then prove that powers of Moufang permutations are Moufang permutations and we calculate the associated biadditive mappings, cf.~Proposition \ref{Pr:fiBeta}. Several statements and proofs in this section resemble those of Section \ref{Sc:CPP} but since neither section is a special case of the other, we present the arguments in full.

It is easy to see from Definition \ref{Df:MoufangPerm} that the identity mapping on $X$ is a Moufang permutation. Also note that if $(f,\beta)$ is a Moufang pair on $(X,+)$ then $\beta=0$ if and only if $f$ is an automorphism of $(X,+)$.

\begin{lemma}\label{Lm:RadImg}
Let $(f,\beta)$ be a Moufang pair on $(X,+)$ and let $i$ be an integer. Then:
\begin{enumerate}
\item[(i)] $f(0)=0$, $f(2x)=2f(x)$ and $f(-x)=-f(x)$ for all $x\in X$,
\item[(ii)] $\img{\beta}\subseteq\rad{\beta}$, $2X\subseteq\rad{\beta}$ and $2\img{\beta}=0$,
\item[(iii)] $f^i$ permutes both $\rad{\beta}$ and $\img{\beta}$,
\item[(iv)] $f^i(x+y)=f^i(x)+f^i(y)$ whenever $\{x,y\}\cap\rad{\beta}\ne\emptyset$,
\item[(v)] $f^i$ restricts to an automorphism of $\rad{\beta}$.
\end{enumerate}
\end{lemma}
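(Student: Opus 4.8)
The plan is to follow the template of Lemma~\ref{Lm:PairRadImg}, exploiting that the defining identity \eqref{Eq:BetaDef} can be rewritten in the convenient form $f^{-1}(f(x)+f(y)) = x+y+\beta(x,y)$. For part~(i) I would substitute $y=0$, $y=x$ and $y=-x$ into this identity and use that $\beta$ is alternating and biadditive: since $\beta(0,0)=0$ one gets $2f(0)=f(0)$, hence $f(0)=0$; since $\beta(x,x)=0$ one gets $f^{-1}(2f(x))=2x$, hence $f(2x)=2f(x)$; and since $\beta(x,-x)=-\beta(x,x)=0$ one gets $f(x)+f(-x)=f(0)=0$. Part~(ii) is immediate: $\img{\beta}\subseteq\rad{\beta}$ is just a restatement of \eqref{Eq:BetaRad}, while $\beta(2x,y)=2\beta(x,y)=0$ (valid for any symmetric alternating biadditive map) yields both $2X\subseteq\rad{\beta}$ and $2\img{\beta}=0$.

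Part~(iii) is the crux, and this is where the argument genuinely departs from the construction-pair case: whereas \eqref{Eq:PairProp} relates $\gamma(g(x),y)$ to $\gamma(x,y)$ after a \emph{single} application of $g$ and so delivers $g$-invariance of $\rad{\gamma}$ at once, the identity \eqref{Eq:BetaProp} shifts \emph{both} arguments by $f$ and carries the power $f^3$. My first step would be to extract from \eqref{Eq:BetaProp} together with the symmetry of $\beta$ the self-adjointness relation $\beta(f^3(x),y)=\beta(x,f^3(y))$: equating $\beta(f(x),f(y))=f(\beta(f^3(x),y))$ with $\beta(f(y),f(x))=f(\beta(f^3(y),x))$, cancelling $f$, and applying symmetry of $\beta$ gives the claim. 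From this I would deduce that $f^3$ preserves $\rad{\beta}$, since $x\in\rad{\beta}$ iff $\beta(x,f^3(y))=0$ for all $y$ iff $\beta(f^3(x),y)=0$ for all $y$ iff $f^3(x)\in\rad{\beta}$ (using surjectivity of $f^3$).

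The remaining ingredient for (iii) is the equivalence $f(x)\in\rad{\beta}\iff f^3(x)\in\rad{\beta}$, obtained directly from \eqref{Eq:BetaProp} by writing an arbitrary element of $X$ as $f(y)$: indeed $f(x)\in\rad{\beta}$ iff $\beta(f(x),f(y))=0$ for all $y$ iff $f(\beta(f^3(x),y))=0$ for all $y$ iff $f^3(x)\in\rad{\beta}$. Chaining this with the $f^3$-invariance just established gives $x\in\rad{\beta}\iff f^3(x)\in\rad{\beta}\iff f(x)\in\rad{\beta}$, so $f$---and hence every $f^i$---permutes $\rad{\beta}$. For $\img{\beta}$ I would argue directly from \eqref{Eq:BetaProp} in the form $f(\beta(f^3(x),y))=\beta(f(x),f(y))$: as $(x,y)$ ranges over $X\times X$ the inner expression $\beta(f^3(x),y)$ ranges over all of $\img{\beta}$ (because $f^3$ is onto) while the right-hand side ranges over $\img{\beta}$ (because $f$ is onto), so $f(\img{\beta})=\img{\beta}$. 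Finally, part~(iv) follows by the same induction as in Lemma~\ref{Lm:PairRadImg}: when $x\in\rad{\beta}$ the rewritten \eqref{Eq:BetaDef} gives $f(x+y)=f(x)+f(y)$, and (iii) keeps every iterate $f^k(x)$ inside $\rad{\beta}$, so additivity propagates to all powers $f^i$; part~(v) is then just (iii) and (iv) combined. I expect the only real obstacle to be the derivation of the self-adjointness relation and the recognition that the two $f^3$-equivalences must be \emph{chained} (rather than, as for construction pairs, read off in a single step) to recover invariance under $f$ itself.
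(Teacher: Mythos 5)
Your proposal is correct, and parts (i), (ii), (iv) and (v) coincide with the paper's argument essentially step for step. The one place where you genuinely diverge is (iii) for $\rad{\beta}$: the paper handles it in a single chain of equivalences, namely $y\in\rad{\beta}$ iff $\beta(x,y)=0$ for all $x$ iff $\beta(f^3x,y)=0$ for all $x$ (reindexing by the bijection $f^3$) iff $f^{-1}\beta(fx,fy)=0$ for all $x$ (by \eqref{Eq:BetaProp}) iff $\beta(x,fy)=0$ for all $x$ iff $f(y)\in\rad{\beta}$ --- no self-adjointness is needed, because the shift by $f^3$ in the first argument is absorbed by the universal quantifier. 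Your route first extracts the relation $\beta(f^3x,y)=\beta(x,f^3y)$ from \eqref{Eq:BetaProp} and symmetry, then chains the two equivalences $x\in\rad{\beta}\iff f^3(x)\in\rad{\beta}$ and $f(x)\in\rad{\beta}\iff f^3(x)\in\rad{\beta}$. This is valid; the identity you derive is exactly \eqref{Eq:PropAux}, which the paper only establishes later inside Lemma \ref{Lm:Prop} by the same symmetry-and-cancellation argument, so your proof of (iii) is a mild detour that anticipates a later lemma rather than a wrong turn. Your treatment of $\img{\beta}$ matches the paper's. The only point worth tightening is (iv) for negative exponents: rather than saying additivity ``propagates to all powers,'' one should note, as the paper does, that $f^{-i}(x+y)=f^{-i}(x)+f^{-i}(y)$ is obtained by applying the already-established additivity of $f^{i}$ to $f^{-i}(x)+f^{-i}(y)$, using that $f^{-i}(x)\in\rad{\beta}$ by (iii).
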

\begin{proof}
(i) Recall the basic properties of mappings $X\times X\to X$ gathered in Subsection \ref{Ss:Mappings}. Since $\beta$ is alternating, we have $0=\beta(x,x) = f^{-1}(f(x)+f(x))-x-x = f^{-1}(2f(x))-2x$ and so $2f(x)=f(2x)$. In particular, $2f(0)=f(2\cdot 0)=f(0)$ and $f(0)=0$ follows. Biadditivity then implies $f^{-1}(0)=0=-\beta(x,x) = \beta(x,-x) = f^{-1}(f(x)+f(-x))$, so $f(x)+f(-x)=0$.

(ii) The condition $\img{\beta}\subseteq\rad{\beta}$ is a restatement of \eqref{Eq:BetaRad}. As pointed out in Subsection \ref{Ss:Mappings}, $\beta(2x,y)=2\beta(x,y)=0$.

(iii) The following conditions are equivalent for $y\in X$: $y\in\rad{\beta}$, $\beta(x,y)=0$ for all $x\in X$, $\beta(f^3x,y)=0$ for all $x\in X$, $f^{-1}\beta(fx,fy)=0$ for all $x\in X$ (by \eqref{Eq:BetaProp}), $\beta(fx,fy)=0$ for all $x\in X$ (since $f(0)=0$ by (i)), $\beta(x,fy)=0$ for all $x\in X$, $f(y)\in\rad{\beta}$. Similarly, $z\in\img{\beta}$ iff $z=\beta(f^3x,y)$ for some $x,y\in X$ iff $z=f^{-1}\beta(fx,fy)$ for some $x,y\in X$ iff $f(z)\in\img{\beta}$.

(iv) The defining condition \eqref{Eq:BetaDef} can be rewritten as $f(x+y+\beta(x,y))=f(x)+f(y)$. Suppose without loss of generality that $x\in\rad{\beta}$. Then $\beta(x,y)=0$ and $f(x+y)=f(x)+f(y)$ follows. By (iii), $f^i(x)\in\rad{\beta}$ for any integer $i$. By induction, we then have $f^{i+1}(x+y) = f^i(f(x+y)) = f^i(f(x)+f(y)) = f^i(f(x))+f^i(f(y)) = f^{i+1}(x)+f^{i+1}(y)$ for any $i>0$. Continuing with $i>0$, we have $f^i(f^{-i}(x)+f^{-i}(y)) = f^i(f^{-i}(x))+f^i(f^{-i}(y)) = x+y$ and hence $f^{-i}(x+y)=f^{-i}(x)+f^{-i}(y)$. Part (v) follows from (iii) and (iv).
\end{proof}

\begin{lemma}\label{Lm:Prop}
Let $f$ be a Moufang pair on $(X,+)$. Then
\begin{align}
	\beta(f^ix,f^iy) &= f^i\beta(f^{3i}x,y)\label{Eq:Prop1},\\
	\beta(f^{3i}x,f^{3j}y) &= f^{-3(i+j)}\beta(x,y)\label{Eq:Prop2}
\end{align}
for all integers $i$, $j$.
\end{lemma}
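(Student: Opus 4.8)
The plan is to prove \eqref{Eq:Prop1} first, by induction on $i$, and then to deduce \eqref{Eq:Prop2} from it together with the symmetry of $\beta$. This mirrors the structure of Lemma \ref{Lm:iPairProp} in the construction-pair setting, with one essential difference: there the single-power covariance is already built into axiom \eqref{Eq:PairProp}, whereas here the defining identity \eqref{Eq:BetaProp} only relates $\beta(fx,fy)$ to $\beta(f^3x,y)$, i.e.\ it shifts both arguments at once, so extracting a one-sided scaling law requires an extra idea. Throughout I would use Lemma \ref{Lm:RadImg} freely, in particular that $\img{\beta}\subseteq\rad{\beta}$ and that $f$ restricts to an automorphism of $\rad{\beta}$.

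For \eqref{Eq:Prop1}, the case $i=0$ is trivial and $i=1$ is exactly \eqref{Eq:BetaProp}. For the forward step I would apply \eqref{Eq:BetaProp} to $f^ix,f^iy$ to peel off one factor of $f$, obtaining $\beta(f^{i+1}x,f^{i+1}y)=f\,\beta(f^{i+3}x,f^iy)$, and then invoke the induction hypothesis with first argument $f^3x$ to rewrite $\beta(f^i(f^3x),f^iy)$ as $f^i\beta(f^{3i+3}x,y)$; combining yields \eqref{Eq:Prop1} for $i+1$. The backward step is symmetric: applying the hypothesis at $f^{-1}x,f^{-1}y$ gives $\beta(f^{i-1}x,f^{i-1}y)=f^i\beta(f^{3i-1}x,f^{-1}y)$, and rewriting \eqref{Eq:BetaProp} in the form $\beta(f^3a,b)=f^{-1}\beta(fa,fb)$ collapses $\beta(f^{3i-1}x,f^{-1}y)$ to $f^{-1}\beta(f^{3i-3}x,y)$, giving \eqref{Eq:Prop1} for $i-1$.

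With \eqref{Eq:Prop1} in hand I would first substitute $f^{-i}x$ for $x$ to obtain the one-sided shift $\beta(x,f^iy)=f^i\beta(f^{2i}x,y)$, and then apply this twice together with the symmetry of $\beta$ to derive $\beta(f^ix,y)=f^{3i}\beta(f^{4i}x,y)$. Specializing to $i=1$ and then replacing $x$ by $f^{-1}x$ turns this into $\beta(x,y)=f^3\beta(f^3x,y)$; applying $f^{-3}$ (legitimate since both sides lie in $\rad{\beta}$, where $f$ is invertible and additive) produces the single-argument covariance $\beta(f^3x,y)=f^{-3}\beta(x,y)$, and a one-line induction upgrades it to $\beta(f^{3n}x,y)=f^{-3n}\beta(x,y)$ for every integer $n$. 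Finally \eqref{Eq:Prop2} follows by applying this to the first argument of $\beta(f^{3i}x,f^{3j}y)$ and then, via symmetry, once more to the first argument of $\beta(x,f^{3j}y)$.

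The main obstacle is exactly the single-argument covariance $\beta(f^3x,y)=f^{-3}\beta(x,y)$. Because \eqref{Eq:BetaProp} shifts both arguments simultaneously, the obvious attempts to isolate a one-variable relation (repeatedly applying \eqref{Eq:BetaProp} or its symmetrized form $\beta(f^3x,y)=\beta(x,f^3y)$) are circular: they only reproduce mutually consistent covariance statements and never pin down the scaling factor. The resolution is to first manufacture the genuinely one-sided identity $\beta(f^ix,y)=f^{3i}\beta(f^{4i}x,y)$ and then exploit that $x$ ranges over all of $X$; substituting $f^{-1}x$ for $x$ converts a relation linking $\beta(f^4x,\cdot)$ to $\beta(fx,\cdot)$ into the collapsing identity $\beta(x,y)=f^3\beta(f^3x,y)$, which breaks the circularity and delivers the scaling law.
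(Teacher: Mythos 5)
Your proof is correct and takes essentially the same route as the paper: \eqref{Eq:Prop1} by induction on $i$ (the paper handles negative $i$ by a single substitution into the already-proved positive case rather than by a downward induction, a cosmetic difference), followed by using \eqref{Eq:Prop1} together with symmetry and a substitution of a power of $f$ into the first argument to decouple the two sides and extract the one-sided covariance $\beta(f^{3i}x,y)=f^{-3i}\beta(x,y)$. The paper's version of your decoupling step is the chain $f^{-3i}\beta(x,y)=f^{-3i}\beta(f^{3(-3i)}f^{9i}x,y)=\beta(f^{6i}x,f^{-3i}y)=\beta(f^{3i}x,y)$, which plays exactly the role of your intermediate identity $\beta(f^ix,y)=f^{3i}\beta(f^{4i}x,y)$.
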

\begin{proof}
Let us first prove \eqref{Eq:Prop1} for all nonnegative integers $i$ by induction on $i$. There is nothing to show for $i=0$. For $i=1$, \eqref{Eq:Prop1} becomes the defining condition \eqref{Eq:BetaProp}. Suppose that \eqref{Eq:Prop1} holds for some $i\ge 0$. Then
\begin{align*}
	\beta(f^{i+1}x,f^{i+1}y) &= \beta(f^ifx,f^ify) = f^i\beta(f^{3i}fx,fy)\\
	&= f^i\beta(ff^{3i}x,fy) \stackrel{\eqref{Eq:BetaProp}}{=} f^if\beta(f^3f^{3i}x,y) = f^{i+1}\beta(f^{3(i+1)}x,y).
\end{align*}
Still with a nonnegative integer $i$, we then also have
\begin{displaymath}
	f^i\beta(f^{-i}x,f^{-i}y) = f^i\beta(f^{3i}f^{-4i}x,f^{-i}y) \stackrel{\eqref{Eq:Prop1}}{=} \beta(f^if^{-4i}x,y) = \beta(f^{-3i}x,y),
\end{displaymath}
which implies $\beta(f^{-i}x,f^{-i}y) = f^{-i}\beta(f^{-3i}x,y)$, finishing the proof of \eqref{Eq:Prop1}.

By symmetry of $\beta$, we then have for any integer $i$
\begin{displaymath}
	f^i\beta(f^{3i}x,y) \stackrel{\eqref{Eq:Prop1}}{=} \beta(f^ix,f^iy) = \beta(f^iy,f^ix) \stackrel{\eqref{Eq:Prop1}}{=} f^i\beta(f^{3i}y,x) = f^i\beta(x,f^{3i}y).
\end{displaymath}
Applying $f^{-i}$ to both sides yields
\begin{equation}\label{Eq:PropAux}
	\beta(f^{3i}x,y) = \beta(x,f^{3i}y).
\end{equation}
For \eqref{Eq:Prop2}, first note that
\begin{displaymath}
	f^{-3i}\beta(x,y) = f^{-3i}\beta(f^{3(-3i)}f^{9i}x,y) \stackrel{\eqref{Eq:Prop1}}{=} \beta(f^{-3i}f^{9i}x,f^{-3i}y)
	= \beta(f^{6i}x,f^{-3i}y) \stackrel{\eqref{Eq:PropAux}}{=} \beta(f^{3i}x,y),
\end{displaymath}
which is \eqref{Eq:Prop2} for the special case of $j=0$. For any $j$, we then have
\begin{displaymath}
	\beta(f^{3i}x,f^{3j}y) \stackrel{\eqref{Eq:PropAux}}{=}  \beta(f^{3(i+j)}x,y) = f^{-3(i+j)}\beta(x,y),
\end{displaymath}
finishing the proof.
\end{proof}

We proceed to show that if $f$ is a Moufang permutation on $(X,+)$ then $f^i$ is also a Moufang permutation on $(X,+)$.

\begin{lemma}\label{Lm:fiBeta}
Let $(f,\beta)$ be a Moufang pair on $(X,+)$. Then for every integer $i$, the mapping $f^i\beta:X\times X\to X$ is symmetric, alternating and biadditive, and it satisfies $\img{f^i\beta} = \img{\beta}\subseteq\rad{\beta} = \rad{f^i\beta}$.
\end{lemma}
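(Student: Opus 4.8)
The plan is to verify each asserted property of $f^i\beta\colon(x,y)\mapsto f^i(\beta(x,y))$ by transporting the corresponding property of $\beta$ through the map $f^i$, using Lemma \ref{Lm:RadImg} to control how $f^i$ interacts with addition and with the subgroups $\rad\beta$ and $\img\beta$. Symmetry and the alternating property are immediate: $f^i\beta(x,y)=f^i(\beta(x,y))=f^i(\beta(y,x))=f^i\beta(y,x)$ since $\beta$ is symmetric, and $f^i\beta(x,x)=f^i(\beta(x,x))=f^i(0)=0$ by Lemma \ref{Lm:RadImg}(i).

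Biadditivity is the only step needing a little care, because $f^i$ is not assumed additive on all of $X$. First I would observe that every value of $\beta$ lies in $\img\beta\subseteq\rad\beta$ by Lemma \ref{Lm:RadImg}(ii). Hence, for the first argument,
\[
 f^i\beta(x+y,z)=f^i(\beta(x,z)+\beta(y,z))=f^i\beta(x,z)+f^i\beta(y,z),
\]
where the last equality uses Lemma \ref{Lm:RadImg}(iv): the map $f^i$ respects a sum as soon as one summand (here in fact both) lies in $\rad\beta$. Additivity in the second argument follows identically, so $f^i\beta$ is biadditive.

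For the image and radical, I would extract from \eqref{Eq:Prop1} the convenient identity $f^i\beta(x,y)=\beta(f^{-2i}x,f^iy)$, obtained by replacing $x$ with $f^{-3i}x$ in $\beta(f^ix,f^iy)=f^i\beta(f^{3i}x,y)$. Since $f$ is a permutation, the pair $(f^{-2i}x,f^iy)$ ranges over all of $X\times X$ as $(x,y)$ does, so $\img{f^i\beta}=\img\beta$; alternatively this follows from Lemma \ref{Lm:RadImg}(iii), that $f^i$ permutes $\img\beta$. For the radical, $f^i$ is a bijection fixing $0$ by Lemma \ref{Lm:RadImg}(i), so $f^i\beta(x,y)=0$ if and only if $\beta(x,y)=0$; thus $\rad{f^i\beta}=\rad\beta$. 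Combining these with the inclusion $\img\beta\subseteq\rad\beta$ of Lemma \ref{Lm:RadImg}(ii) yields the chain $\img{f^i\beta}=\img\beta\subseteq\rad\beta=\rad{f^i\beta}$. No step presents a genuine obstacle; the only point demanding attention is the biadditivity argument, where one must first invoke the containment $\img\beta\subseteq\rad\beta$ before pulling $f^i$ across the sum.
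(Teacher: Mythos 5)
Your proof is correct and follows essentially the same route as the paper: symmetry and the alternating property are transported directly through $f^i$, biadditivity is justified via Lemma \ref{Lm:RadImg} because the values of $\beta$ lie in $\rad{\beta}$ on which $f^i$ acts as an automorphism, and the image and radical equalities come from $f^i$ permuting $\img{\beta}$ and fixing $0$. The extra identity $f^i\beta(x,y)=\beta(f^{-2i}x,f^iy)$ you extract from \eqref{Eq:Prop1} is a valid but unnecessary embellishment, as you yourself note.
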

\begin{proof}
By Lemma \ref{Lm:RadImg}, $f^i$ is an automorphism of $\rad{\beta}$, $\img{\beta}\subseteq\rad{\beta}$ and $f^i$ permutes $\img{\beta}$. Therefore $\img{f^i\beta}=\img{\beta}$, $f^i\beta$ is symmetric and biadditive (as $\beta$ is symmetric and biadditive), and also alternating (since $\beta$ is alternating and $f(0)=0$). Using $f(0)=0$ again, we see that $f^i\beta(x,y)=0$ if and only if $\beta(x,y)=0$. Hence $\rad{f^i\beta}=\rad{\beta}$.
\end{proof}

\begin{lemma}\label{Lm:Inverse}
If $(f,\beta)$ is a Moufang pair on $(X,+)$ then $(f^{-1},f^3\beta)$ is also a Moufang pair on $(X,+)$.
\end{lemma}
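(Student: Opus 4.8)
The plan is to verify directly that $f^{-1}$ meets the three requirements of Definition \ref{Df:MoufangPerm} and that the biadditive mapping it induces is exactly $f^3\beta$. Write $\beta'$ for the mapping attached to $f^{-1}$ by \eqref{Eq:BetaDef}, so $\beta'(x,y) = f(f^{-1}(x)+f^{-1}(y)) - x - y$. The only genuinely computational task is to show $\beta' = f^3\beta$; granting this, the symmetry, the alternating property and the biadditivity of $f^3\beta$ come for free from Lemma \ref{Lm:fiBeta} with $i=3$, which also records $\img{f^3\beta} = \img\beta \subseteq \rad\beta = \rad{f^3\beta}$.

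To compute $\beta'$, I would first rewrite \eqref{Eq:BetaDef} in the form $f(x)+f(y) = f(x+y+\beta(x,y))$ and set $u=f^{-1}(x)$, $v=f^{-1}(y)$, so that $x+y = f(u)+f(v) = f(u+v+\beta(u,v))$. Since $\beta(u,v)\in\img\beta\subseteq\rad\beta$ by Lemma \ref{Lm:RadImg}, part (iv) of that lemma allows $f$ to distribute over $(u+v)+\beta(u,v)$, giving $f(u)+f(v) = f(u+v)+f\beta(u,v)$ and hence $f(u+v) = x+y - f\beta(f^{-1}(x),f^{-1}(y))$. Thus $\beta'(x,y) = -f\beta(f^{-1}(x),f^{-1}(y))$. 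Now \eqref{Eq:Prop1} with $i=-1$ yields $\beta(f^{-1}(x),f^{-1}(y)) = f^{-1}\beta(f^{-3}(x),y)$, so $\beta'(x,y) = -\beta(f^{-3}(x),y)$, while \eqref{Eq:Prop2} with $i=-1$, $j=0$ gives $\beta(f^{-3}(x),y) = f^3\beta(x,y)$. Because $2\img\beta = 0$ by Lemma \ref{Lm:RadImg}, the sign is irrelevant and $\beta'(x,y) = f^3\beta(x,y)$, as wanted.

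With $\beta' = f^3\beta$ established, the two remaining axioms are immediate. For \eqref{Eq:BetaRad} applied to $(f^{-1},\beta')$, note that $\beta'(x,y)\in\img{f^3\beta} = \img\beta \subseteq \rad\beta = \rad{f^3\beta}$ by Lemma \ref{Lm:fiBeta}, whence $\beta'(\beta'(x,y),z)=0$ automatically. For \eqref{Eq:BetaProp} applied to $(f^{-1},\beta')$, the requirement reads $\beta'(f^{-1}(x),f^{-1}(y)) = f^{-1}\beta'(f^{-3}(x),y)$; substituting $\beta'=f^3\beta$ and applying \eqref{Eq:Prop1} with $i=-1$ reduces both sides to $f^2\beta(f^{-3}(x),y)$, so the identity holds. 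The sole obstacle I anticipate is the first computation, and within it the one step that is not purely formal: the justification that $f$ distributes over $(u+v)+\beta(u,v)$, which rests on $\beta(u,v)\in\rad\beta$ together with Lemma \ref{Lm:RadImg}(iv). Everything past that point is a direct invocation of Lemmas \ref{Lm:Prop} and \ref{Lm:fiBeta}, with $2\img\beta = 0$ used throughout to discard signs.
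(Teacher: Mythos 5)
Your proposal is correct and follows essentially the same route as the paper's proof: both establish that the mapping induced by $f^{-1}$ via \eqref{Eq:BetaDef} equals $f^3\beta$ by combining \eqref{Eq:Prop1}, \eqref{Eq:Prop2} and Lemma \ref{Lm:RadImg}(iv) (the paper computes $f^3\beta(x,y)=f(f^{-1}x+f^{-1}y)-x-y$ in the forward direction, you unwind the definition of $\beta'$ to reach $f^3\beta$, which is the same calculation read backwards), and then both verify \eqref{Eq:BetaProp} and \eqref{Eq:BetaRad} by the identical reductions, citing Lemma \ref{Lm:fiBeta} for the symmetric/alternating/biadditive properties.
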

\begin{proof}
Recall that $f(-x)=-f(x)$ and $2\img{\beta}=0$, by Lemma \ref{Lm:RadImg}. By \eqref{Eq:BetaDef} and Lemma \ref{Lm:RadImg}, we have $f(x)+f(y)=f(x+y+\beta(x,y)) = f(x+y)+f\beta(x,y) = f(x+y)-f\beta(x,y)$, so $f\beta(x,y) = f(x+y)-f(x)-f(y)$ for any $x,y\in X$. Using this in the last step of the following calculation, we see that
\begin{displaymath}
    f^3\beta(x,y) \stackrel{\eqref{Eq:Prop2}}{=}
    \beta(f^{-3}x,y) \stackrel{\eqref{Eq:Prop1}}{=}
    f\beta(f\m x,f\m y) = f(f^{-1} x+f^{-1}y)-x-y,
\end{displaymath}
which is the analog of \eqref{Eq:BetaDef} for $(f^{-1},f^3\beta)$. Moreover,
\begin{displaymath}
	f^3\beta(f\m x,f\m y) \stackrel{\eqref{Eq:Prop1}}{=} f^3f^{-1}\beta(f^{-3}x,y) = f^2\beta(f^{-3} x,y) = f^{-1}f^3\beta((f^{-1})^3x,y),
\end{displaymath}
which is the analog of \eqref{Eq:BetaProp}. By Lemma \ref{Lm:fiBeta}, $f^3\beta$ is alternating and biadditive.  Since $f$ permutes $\img{\beta}\subseteq\rad{\beta}$ and $f(0)=0$ by Lemma \ref{Lm:RadImg}, we have $f^3\beta(f^3\beta(x,y),z) = f^3(0)=0$, the analog of \eqref{Eq:BetaRad}.
\end{proof}

Recall the intervals $I(i,j)$ of \eqref{Eq:Interval}.

\begin{proposition}\label{Pr:iBetaDef}
Let $(f,\beta)$ be a Moufang pair on $(X,+)$. Then for every integer $i$ we have
\begin{equation}\label{Eq:iBetaDef}
    f^{-i}(f^i(x)+f^i(y)) = x+y+\lcsum{k\in I(0,i)}f^{-3k}\beta(x,y)
\end{equation}
for every $x,y\in X$.
\end{proposition}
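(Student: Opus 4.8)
The plan is to prove \eqref{Eq:iBetaDef} by induction on $i$, in close parallel with the proof of \eqref{Eq:iPairDef} for construction pairs. The two base cases are immediate: for $i=0$ we have $I(0,0)=\emptyset$ and both sides equal $x+y$, while for $i=1$ we have $I(0,1)=\{0\}$ and \eqref{Eq:iBetaDef} is precisely the defining identity \eqref{Eq:BetaDef}, rewritten as $f^{-1}(f(x)+f(y))=x+y+\beta(x,y)$. The engine of the induction is the simplification
\begin{displaymath}
\beta(fx,fy)=f^{-2}\beta(x,y),
\end{displaymath}
which I would record first: by \eqref{Eq:Prop1} with $i=1$ we have $\beta(fx,fy)=f\beta(f^3x,y)$, and by \eqref{Eq:Prop2} with $i=1$, $j=0$ we have $\beta(f^3x,y)=f^{-3}\beta(x,y)$; since $\beta(x,y)\in\img{\beta}$ and $f$ permutes $\img{\beta}$ (Lemma \ref{Lm:RadImg}), the composition $f\circ f^{-3}$ acts as $f^{-2}$ on this element.

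For the step from $i\ge 0$ to $i+1$, I would write $f^{-(i+1)}(f^{i+1}x+f^{i+1}y)=f^{-1}\big(f^{-i}(f^i(fx)+f^i(fy))\big)$ and apply the induction hypothesis to the pair $(fx,fy)$, replacing each $\beta(fx,fy)$ by $f^{-2}\beta(x,y)$. All the resulting sum terms lie in $\img{\beta}\subseteq\rad{\beta}$, so Lemma \ref{Lm:RadImg} lets $f^{-1}$ distribute over the sum and over $f(x)+f(y)$; the summand $f^{-1}(f(x)+f(y))$ collapses to $x+y+\beta(x,y)$ by \eqref{Eq:BetaDef}, the remaining terms shift by $f^{-3}$, and a reindexing $k\mapsto k+1$ together with Lemma \ref{Lm:Interval} turns $\{0\}\cup(I(0,i)+1)$ into $I(0,i+1)$.

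For negative $i$ I would invoke Lemma \ref{Lm:Inverse}, by which $(f^{-1},f^3\beta)$ is again a Moufang pair; applying the already-established nonnegative case to this pair and translating its sum terms $f^{3k}\cdot f^3\beta=f^{3(k+1)}\beta$ back in terms of $f$ and $\beta$ yields \eqref{Eq:iBetaDef} for all $i\le 0$ after the analogous interval bookkeeping (alternatively one runs a downward induction with base case $i=-1$, where \eqref{Eq:iBetaDef} reduces to the analog of \eqref{Eq:BetaDef} for the inverse pair established in the proof of Lemma \ref{Lm:Inverse}). The only real subtlety --- and the point demanding care throughout --- is that $f$ is \emph{not} additive: every manipulation must keep the $\beta$-terms inside $\rad{\beta}$, where Lemma \ref{Lm:RadImg}(iv),(v) guarantees that powers of $f$ behave as homomorphisms, and every occurrence of $\beta$ with shifted arguments must be reduced to a power of $f$ applied to $\beta(x,y)$ via \eqref{Eq:Prop1} and \eqref{Eq:Prop2} before the sums can be combined.
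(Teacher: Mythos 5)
Your proposal is correct and follows essentially the same route as the paper's proof: induction anchored at $i=0,1$, the key reduction $\beta(fx,fy)=f\beta(f^3x,y)=f^{-2}\beta(x,y)$ via \eqref{Eq:Prop1}--\eqref{Eq:Prop2}, distribution of $f^{-1}$ over sums of $\rad{\beta}$-elements by Lemma \ref{Lm:RadImg}, and the passage to negative $i$ through the inverse Moufang pair $(f^{-1},f^3\beta)$ of Lemma \ref{Lm:Inverse} with a downward induction. The only cosmetic difference is your extra justification that $f\circ f^{-3}$ acts as $f^{-2}$ on $\img{\beta}$, which is automatic for composition of permutations and needs no appeal to Lemma \ref{Lm:RadImg}.
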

\begin{proof}
Let us first prove \eqref{Eq:iBetaDef} for all $i\ge 0$, a situation in which $I(0,i)=\{0,1,\dots,i-1\}$. The case $i=0$ is clear since $I(0,0)=\emptyset$, and the case $i=1$ is \eqref{Eq:BetaDef}. If \eqref{Eq:iBetaDef} holds for some $i\ge 0$ then
\begin{displaymath}
    f^{-i-1}(f^{i+1}x+f^{i+1}y) = f^{-1}f^{-i}(f^ifx+f^ify) = f^{-1}\Big(fx+fy+\lcsum{k\in I(0,i)}f^{-3k}\beta(fx,fy)\Big).
\end{displaymath}
By Lemma \ref{Lm:RadImg}, $\img{\beta}\subseteq\rad{\beta}$ and $f$ permutes $\rad{\beta}$, which implies that every summand of $\sum f^{-3k}\beta(fx,fy)$ as thus also the entire sum are elements of $\rad{\beta}$. By Lemma \ref{Lm:RadImg}(iv), we can the continue the above calculation as
\begin{displaymath}
    f^{-1}(fx+fy) + f^{-1}\lcsum{k\in I(0,i)}f^{-3k}\beta(fx,fy) = f^{-1}(fx+fy)+\lcsum{k\in I(0,i)}f^{-3k-1}\beta(fx,fy).
\end{displaymath}
Now, $f^{-1}(fx+fy) = x+y+\beta(x,y)$ by \eqref{Eq:BetaDef}. Also,  $\beta(fx,fy) = f\beta(f^3x,y) = f^{-2}\beta(x,y)$ by \eqref{Eq:BetaProp} and \eqref{Eq:Prop2}, so
\begin{displaymath}
	\lcsum{k\in I(0,i)}f^{-3k-1}\beta(fx,fy) = \lcsum{k\in I(0,i)}f^{-3k-1}f^{-2}\beta(x,y) = \lcsum{k\in I(0,i)}f^{-3(k+1)}\beta(x,y) = \sum_{k=1}^i f^{-3k}\beta(x,y).
\end{displaymath}
 Altogether,
\begin{displaymath}
	f^{-i-1}(f^{i+1}x+f^{i+1}y)  = x+y+\beta(x,y) + \sum_{k=1}^{i}f^{-3k}\beta(x,y) = x+y+\sum_{k=0}^{i}f^{-3k}\beta(x,y),
\end{displaymath}
which is \eqref{Eq:iBetaDef} for $i+1$.

Let now $i<0$ so that  $I(0,i) = \{-i,-i+1,\dots,-1\}$. By Lemma \ref{Lm:Inverse}, $(f^{-1},f^3\beta)$ is a Moufang pair. For $i=-1$, \eqref{Eq:iBetaDef} reduces to $f(f^{-1}(x)+f^{-1}(y)) = x+y+f^3\beta(x,y)$, which is the analog of \eqref{Eq:BetaDef} for $(f^{-1},f^3\beta)$. If \eqref{Eq:iBetaDef} holds for some $i<0$ then
\begin{displaymath}
	f^{-i+1}(f^{i-1}x+f^{i-1}y) = ff^{-i}(f^if^{-1}x+f^if^{-1}y) = f\Big(f^{-1}x{+}f^{-1}y{+}\lcsum{k\in I(0,i)}f^{-3k}\beta(f^{-1}x,f^{-1}y)\Big),
\end{displaymath}
which by Lemma \ref{Lm:RadImg} can be further written as
\begin{displaymath}
	f(f^{-1}x+f^{-1}y) + f\lcsum{k\in I(0,i)}f^{-3k}\beta(f^{-1}x,f^{-1}y) = f(f^{-1}x+f^{-1}y)+\lcsum{k\in I(0,i)}f^{-3k+1}\beta(f^{-1}x,f^{-1}y).
\end{displaymath}
Now, $f(f^{-1}x+f^{-1}y) = x+y+f^3\beta(x,y)$ by \eqref{Eq:BetaDef} for the Moufang pair $(f^{-1},f^3\beta)$. Also, $\beta(f^{-1}x,f^{-1}y) = f^{-1}\beta(f^{-3}x,y) = f^2\beta(x,y)$ by \eqref{Eq:Prop1} and \eqref{Eq:Prop2}, so
\begin{displaymath}
	\lcsum{k\in I(0,i)}f^{-3k+1}\beta(f^{-1}x,f^{-1}y) = \lcsum{k\in I(0,i)}f^{-3k+1}f^2\beta(x,y) =  \lcsum{k\in I(0,i)}f^{-3(k-1)}\beta(x,y) = \csum{k=-(i-1)}{-2}f^{-3k}\beta(x,y).
\end{displaymath}
Altogether,
\begin{displaymath}
	f^{-i+1}(f^{i-1}x+f^{i-1}y)   = x+y+f^3\beta(x,y) + \csum{k=-(i-1)}{-2}f^{-3k}\beta(x,y) = x+y+\csum{k=-(i-1)}{-1}f^{-3k}\beta(x,y),
\end{displaymath}
which is \eqref{Eq:iBetaDef} for $i-1$.
\end{proof}

\begin{proposition}\label{Pr:fiBeta}
Let $(f,\beta)$ be a Moufang pair on $(X,+)$ and $i\in\mathbb Z$. Let
\begin{displaymath}
	\beta_i = \lcsum{k\in I(0,i)}{f^{-3k}\beta}.
\end{displaymath}
Then $(f^i,\beta_i)$ is a Moufang pair on $(X,+)$. Furthermore, $\img{\beta_i}\subseteq\img{\beta}\subseteq\rad{\beta}\subseteq\rad{\beta_i}$.
\end{proposition}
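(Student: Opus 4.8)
The plan is to check, one at a time, the three defining conditions of Definition \ref{Df:MoufangPerm} for the pair $(f^i,\beta_i)$, with $f^i$ playing the role of the Moufang permutation and $\beta_i$ its candidate associated biadditive mapping (the permutation $f^i$ being, trivially, a power of a permutation). First I would confirm that $\beta_i$ is precisely the map attached to $f^i$ by \eqref{Eq:BetaDef}: Proposition \ref{Pr:iBetaDef} gives $f^{-i}(f^i(x)+f^i(y)) = x+y+\sum_{k\in I(0,i)}f^{-3k}\beta(x,y) = x+y+\beta_i(x,y)$, which is exactly \eqref{Eq:BetaDef} for $f^i$. Since each summand $f^{-3k}\beta$ is symmetric, alternating and biadditive by Lemma \ref{Lm:fiBeta}, and these three properties are preserved by finite sums, $\beta_i$ is symmetric, alternating and biadditive.

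The crux of the whole argument is a single rewriting. Using \eqref{Eq:Prop2} in the special form $f^{-3k}\beta(x,y)=\beta(f^{3k}x,y)$ together with biadditivity of $\beta$, I would obtain
\[
\beta_i(x,y) = \sum_{k\in I(0,i)}\beta(f^{3k}x,y) = \beta\Big(\sum_{k\in I(0,i)}f^{3k}x,\,y\Big).
\]
This shows at once that $\img{\beta_i}\subseteq\img{\beta}$, and combined with $\img{\beta}\subseteq\rad{\beta}$ from Lemma \ref{Lm:RadImg}(ii) it yields the first two inclusions of the furthermore-claim. For the last inclusion $\rad{\beta}\subseteq\rad{\beta_i}$, note that if $x\in\rad{\beta}$ then $f^{3k}x\in\rad{\beta}$ by Lemma \ref{Lm:RadImg}(iii), so every term $\beta(f^{3k}x,y)$ vanishes and hence $\beta_i(x,y)=0$ for all $y$.

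Condition \eqref{Eq:BetaRad} for $\beta_i$ is then immediate: the chain $\img{\beta_i}\subseteq\rad{\beta_i}$ just established puts $\beta_i(x,y)$ into $\rad{\beta_i}$, so $\beta_i(\beta_i(x,y),z)=0$. It remains to verify \eqref{Eq:BetaProp} for $f^i$, that is, $\beta_i(f^ix,f^iy)=f^i\beta_i(f^{3i}x,y)$. Applying \eqref{Eq:Prop1} to each term gives $\beta_i(f^ix,f^iy)=\sum_{k\in I(0,i)}f^{-3k}\beta(f^ix,f^iy)=\sum_{k\in I(0,i)}f^{-3k}f^i\beta(f^{3i}x,y)=\sum_{k\in I(0,i)}f^{i-3k}\beta(f^{3i}x,y)$, whereas $f^i\beta_i(f^{3i}x,y)=f^i\sum_{k\in I(0,i)}f^{-3k}\beta(f^{3i}x,y)=\sum_{k\in I(0,i)}f^{i-3k}\beta(f^{3i}x,y)$; the two sides coincide.

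The one point needing care throughout — and essentially the only place anything could slip — is the repeated commuting of a power of $f$ past a finite sum (used above when distributing $f^i$ over $\sum_k f^{-3k}\beta(f^{3i}x,y)$). This is legitimate only because every term involved lies in $\img{\beta}\subseteq\rad{\beta}$, on which each power of $f$ acts as an additive automorphism by Lemma \ref{Lm:RadImg}(iv),(v). I would flag this justification once and then apply it silently, exactly as in the parallel computations of Section \ref{Sc:CPP}.
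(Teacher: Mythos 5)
Your proof is correct and follows essentially the same route as the paper's: the analog of \eqref{Eq:BetaDef} for $(f^i,\beta_i)$ comes from Proposition \ref{Pr:iBetaDef}, the structural properties of $\beta_i$ from termwise application of Lemma \ref{Lm:fiBeta}, and \eqref{Eq:BetaProp} from \eqref{Eq:Prop1} together with the additivity of powers of $f$ on $\rad{\beta}$. Your explicit rewriting $\beta_i(x,y)=\beta\bigl(\sum_{k\in I(0,i)}f^{3k}x,\,y\bigr)$ is a welcome refinement, since it yields $\img{\beta_i}\subseteq\img{\beta}$ directly (termwise membership of each summand in $\img{\beta}$ alone would not suffice, as $\img{\beta}$ need not be closed under addition).
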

\begin{proof}
By Lemma \ref{Lm:fiBeta}, $\beta_i$ is a symmetric alternating biadditive mapping such that $\img{\beta_i}\subseteq \img{\beta}\subseteq \rad{\beta} \subseteq \rad{\beta_i}$. The analog of \eqref{Eq:BetaDef} for $(f^i,\beta_i)$ holds by Proposition \ref{Pr:iBetaDef}. By \eqref{Eq:Prop1} and Lemma \ref{Lm:RadImg}, we have
\begin{displaymath}
	\beta_i(f^ix,f^iy) = \lcsum{k\in I(0,i)}f^{-3k}\beta(f^ix,f^iy) = f^i \lcsum{k\in I(0,i)}f^{-3k}\beta(f^{3i}x,y) = f^i\beta_i(f^{3i}x,y),
\end{displaymath}
which is the analog of \eqref{Eq:BetaProp}. The analog of \eqref{Eq:BetaRad} is satisfied by a similar argument as in the proof of Lemma \ref{Lm:Inverse}.
\end{proof}

\begin{corollary}\label{Cr:iFormula}
Let $(f,\beta)$ be a Moufang pair on $(X,+)$ and let $\beta_i$ be defined as in Proposition $\ref{Pr:fiBeta}$. Then
\begin{equation}\label{Eq:iFormula}
	f^{-i}(x+y)=f^{-i}(x) + f^{-i}(y) + f^{2i}\beta_i(x,y).
\end{equation}
for every $x,y\in X$ and $i\in\mathbb Z$.
\end{corollary}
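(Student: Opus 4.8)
The plan is to mirror the proof of Corollary \ref{Cr:iPairFormula}, using Proposition \ref{Pr:iBetaDef} as the Moufang-pair analog of \eqref{Eq:iPairDef}. First I would substitute $f^{-i}(x)$ for $x$ and $f^{-i}(y)$ for $y$ into \eqref{Eq:iBetaDef}. Since $f^i f^{-i} = \id{X}$, the left-hand side becomes $f^{-i}(x+y)$ and the right-hand side becomes
\begin{displaymath}
    f^{-i}(x) + f^{-i}(y) + \lcsum{k\in I(0,i)}f^{-3k}\beta(f^{-i}x,f^{-i}y) = f^{-i}(x)+f^{-i}(y)+\beta_i(f^{-i}x,f^{-i}y),
\end{displaymath}
where I am reading $\beta_i = \sum_{k\in I(0,i)}f^{-3k}\beta$ as in Proposition \ref{Pr:fiBeta}. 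It therefore remains only to identify $\beta_i(f^{-i}x,f^{-i}y)$ with $f^{2i}\beta_i(x,y)$.

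The key computation is the single-term identity $\beta(f^{-i}x,f^{-i}y) = f^{2i}\beta(x,y)$. I would obtain this by applying \eqref{Eq:Prop1} with $i$ replaced by $-i$, giving $\beta(f^{-i}x,f^{-i}y) = f^{-i}\beta(f^{-3i}x,y)$, and then applying \eqref{Eq:Prop2} with $(i,j)$ replaced by $(-i,0)$, giving $\beta(f^{-3i}x,y) = f^{3i}\beta(x,y)$; composing the two yields $f^{-i}f^{3i}\beta(x,y) = f^{2i}\beta(x,y)$. (Note this reduction is exactly what forces the two-step use of Lemma \ref{Lm:Prop}: \eqref{Eq:Prop2} alone does not apply to $f^{-i}$ for general $i$, so \eqref{Eq:Prop1} is needed first to convert the inner arguments into an exponent divisible by $3$.) Substituting into the sum and pulling the power of $f$ outside, I then get
\begin{displaymath}
    \beta_i(f^{-i}x,f^{-i}y) = \lcsum{k\in I(0,i)}f^{-3k}f^{2i}\beta(x,y) = f^{2i}\lcsum{k\in I(0,i)}f^{-3k}\beta(x,y) = f^{2i}\beta_i(x,y),
\end{displaymath}
which combined with the first paragraph gives \eqref{Eq:iFormula}.

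The only point requiring care, and hence the closest thing to an obstacle, is the interchange of $f^{2i}$ with the summation in the last display. This is justified by Lemma \ref{Lm:RadImg}: each summand $f^{-3k}\beta(x,y)$ lies in $\img{\beta}\subseteq\rad{\beta}$, and $f^{2i}$ restricts to an automorphism of $\rad{\beta}$, so $f^{2i}$ acts additively on the whole sum (and commutes with the powers $f^{-3k}$). I do not anticipate any genuine difficulty beyond invoking these facts, so the proof should be short and entirely parallel to Corollary \ref{Cr:iPairFormula}.
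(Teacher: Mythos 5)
Your proof is correct and follows essentially the same route as the paper: both arguments first reduce \eqref{Eq:iFormula} to the identity $\beta_i(f^{-i}x,f^{-i}y)=f^{2i}\beta_i(x,y)$ (you via Proposition \ref{Pr:iBetaDef}, the paper via \eqref{Eq:BetaDef} for the pair $(f^i,\beta_i)$, which is the same statement), and then establish that identity from \eqref{Eq:Prop1} and \eqref{Eq:Prop2}. The only cosmetic difference is that the paper applies those two identities once to the derived pair $(f^i,\beta_i)$, whereas you apply them termwise to the summands of $\beta_i$ and then pull $f^{2i}$ out of the sum via Lemma \ref{Lm:RadImg} — both steps are valid.
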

\begin{proof}
By \eqref{Eq:BetaDef} for the Moufang pair $(f^i,\beta_i)$, we have
\begin{displaymath}
	f^{-i}(x+y)= f^{-i}(f^if^{-i}(x) + f^if^{-i}(y)) =f^{-i}(x) + f^{-i}(y) + \beta_i(f^{-i}x,f^{-i}y).
\end{displaymath}
By \eqref{Eq:BetaProp} and \eqref{Eq:Prop2} for $(f^i,\beta_i)$, we have
\begin{displaymath}
	\beta_i(f^{-i}x,f^{-i}y) = f^{-i}\beta_i(f^{-3i}x,y) = f^{-i}f^{3i}\beta_i(x,y) = f^{2i}\beta(x,y).\qedhere
\end{displaymath}
\end{proof}

Finally, here is the relation between Moufang pairs and construction pairs announced in the introduction.

\begin{proposition}\label{Pr:MPCP}
Let $(f,\beta)$ be a Moufang pair on $(X,+)$. Then $(f^3,\beta)$ is a construction pair on $(X,+)$.
\end{proposition}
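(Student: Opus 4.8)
The plan is to verify the three defining conditions \eqref{Eq:PairDef}, \eqref{Eq:PairRad}, \eqref{Eq:PairProp} of a construction pair for the pair $(g,\gamma)=(f^3,\beta)$, reading each one off from a result already established in this section. First I would observe that $\beta$ is symmetric, alternating and biadditive by the defining assumption \eqref{Eq:BetaDef} on the Moufang pair $(f,\beta)$, so that the structural hypothesis on $\gamma$ in Definition \ref{Df:ConstructionPair} is automatic. It then remains only to check the three displayed identities with $g$ replaced by $f^3$ and $\gamma$ replaced by $\beta$.

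The conditions \eqref{Eq:PairRad} and \eqref{Eq:PairProp} require essentially no new computation. The radical condition \eqref{Eq:PairRad}, which here reads $\beta(\beta(x,y),z)=0$, is literally the Moufang-pair assumption \eqref{Eq:BetaRad}. For \eqref{Eq:PairProp} with $g=f^3$ one must show $f^{-3}\beta(x,y)=\beta(f^3(x),y)$, and this is exactly \eqref{Eq:Prop2} of Lemma \ref{Lm:Prop} specialized to $i=1$, $j=0$.

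The substantive condition is \eqref{Eq:PairDef}, which for $(f^3,\beta)$ becomes
\[
    f^{-3}(f^3(x)+f^3(y)) = x+y+\beta(x,y)+f^{-3}\beta(x,y)+f^{-6}\beta(x,y).
\]
Here I would invoke Proposition \ref{Pr:iBetaDef} with $i=3$: since $I(0,3)=\{0,1,2\}$, the sum $\sum_{k\in I(0,3)}f^{-3k}\beta(x,y)$ equals $\beta(x,y)+f^{-3}\beta(x,y)+f^{-6}\beta(x,y)$, which matches the right-hand side above verbatim. This establishes \eqref{Eq:PairDef} and completes the verification.

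There is no genuine obstacle at this stage, which is precisely the point: all the difficulty has been absorbed into the earlier lemmas, above all into the inductive identity of Proposition \ref{Pr:iBetaDef} (whose case $i=3$ \emph{is} \eqref{Eq:PairDef}) and into the transport identity \eqref{Eq:Prop2} (whose case $i=1$, $j=0$ \emph{is} \eqref{Eq:PairProp}). The proposition is thus a matter of selecting the correct specializations and confirming that the interval $I(0,3)$ contributes exactly the three consecutive powers $f^{0}$, $f^{-3}$, $f^{-6}$ demanded by the definition of a construction pair.
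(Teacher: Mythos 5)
Your proposal is correct and follows essentially the same route as the paper's proof: symmetry of $\beta$ is read off from \eqref{Eq:BetaDef}, \eqref{Eq:PairRad} is literally \eqref{Eq:BetaRad}, \eqref{Eq:PairProp} is the specialization of \eqref{Eq:Prop2}, and \eqref{Eq:PairDef} is Proposition \ref{Pr:iBetaDef} with $i=3$. No discrepancies to report.
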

\begin{proof}
By definition, $\beta$ is alternating and biadditive. It is clear from \eqref{Eq:BetaDef} that $\beta$ is also symmetric. Let $(g,\gamma)=(f^3,\beta)$. Then \eqref{Eq:BetaRad} becomes \eqref{Eq:PairRad}. By \eqref{Eq:Prop2}, $g^{-1}\gamma(x,y) = f^{-3}\beta(x,y) = \beta(f^3x,y) = \gamma(gx,y)$, which is \eqref{Eq:PairProp}. Finally, by \eqref{Eq:iBetaDef} for $i=3$, we have $g^{-1}(gx+gy) =f^{-3}(f^3x+f^3y) = x+y+\sum_{k=0}^2f^{-3k}\beta(x,y) = x+y + \gamma(x,y) + g^{-1}\gamma(x,y) + g^{-2}\gamma(x,y)$, which is \eqref{Eq:PairDef}.
\end{proof}

\section{Moufang loops constructed from Moufang permutations}\label{Sc:MainResults}

In this section we consider the Moufang loops constructed from Moufang permutation and we prove most of the main results.

\begin{theorem}\label{Th:MoufangPair}
Let $(X,+)$ be an abelian normal subgroup of a Moufang loop $(Q,\cdot)$ and let $a\in Q$. Denote by $f$ the restriction
of $T_a$ to $X$. Then $f$ is a Moufang permutation on $(X,+)$ with associated biadditive mapping $\beta$ defined by \eqref{Eq:BetaDef}. Moreover,
\begin{align}
    a^{3i}x\cdot ya^{3j} &= a^{3(i+j)}\Big(f^{-3j}(x)+f^{-3j}(y) + \lcsum{k\in I(-2j,i)}f^{-3k}\beta(x,y)\Big),\label{Eq:F1}\\
    a^{3i}x\cdot a^{3j}y &= a^{3(i+j)}\Big(f^{-3j}(x) + y + \lcsum{k\in I(i+j,-j)}f^{-3k}\beta(x,y)\Big)\label{Eq:F2}
\end{align}
for all integers $i,j$ and all $x,y\in Q$.
\end{theorem}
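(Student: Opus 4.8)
The first assertion of the theorem, namely that $f$ is a Moufang permutation on $(X,+)$ with associated biadditive mapping $\beta$ given by \eqref{Eq:BetaDef}, is precisely the content of Proposition \ref{Pr:Beta}, so nothing new is needed there. For the two multiplication formulas I take $x,y\in X$, as required for $f^{\bullet}(x)$ and $\beta(x,y)$ to be defined. The key reduction is structural: since $X\unlhd Q$, the conjugation $T_a$ restricts to $X$ as $f$, hence $T_a^k$ restricts to $f^k$ on $X$, every value $T_a^k(x)$ of an element $x\in X$ again lies in $X$, and the product in $Q$ of two elements of $X$ is their sum in $(X,+)$. Thus the general identities \eqref{Eq:V2} and \eqref{Eq:V1}, valid for all elements of $Q$, collapse on $X$ into expressions written purely in terms of $f$, $+$ and $\beta$.

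To establish \eqref{Eq:F1}, I start from \eqref{Eq:V2}, which for $x,y\in X$ reads $a^{3i}x\cdot ya^{3j} = a^{3(i+j)}f^{-i-2j}(f^{i-j}(x)+f^{i-j}(y))$. Applying Proposition \ref{Pr:iBetaDef} with exponent $i-j$ gives $f^{i-j}(x)+f^{i-j}(y)=f^{i-j}\big(x+y+\sum_{k\in I(0,i-j)}f^{-3k}\beta(x,y)\big)$, so the inner factor becomes $f^{-3j}\big(x+y+\sum_{k\in I(0,i-j)}f^{-3k}\beta(x,y)\big)$. Every summand of the $\beta$-sum lies in $\img{\beta}\subseteq\rad{\beta}$, so by Lemma \ref{Lm:RadImg}(iv)--(v) the map $f^{-3j}$ distributes over the outer sum and acts termwise on the $\beta$-part; a shift of the summation index by $j$ (Lemma \ref{Lm:Interval}(ii)) turns that part into $\sum_{k\in I(j,i)}f^{-3k}\beta(x,y)$. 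For the remaining term $f^{-3j}(x+y)$ I invoke Proposition \ref{Pr:MPCP}, which says that $(f^3,\beta)$ is a construction pair, and then Corollary \ref{Cr:iPairFormula} (with $g=f^3$, $\gamma=\beta$, exponent $j$), obtaining $f^{-3j}(x+y)=f^{-3j}(x)+f^{-3j}(y)+\sum_{k\in I(-2j,j)}f^{-3k}\beta(x,y)$. Finally I merge the two interval sums: since $2\img{\beta}=0$, addition of $\beta$-values corresponds to the symmetric difference of index sets, and $I(-2j,j)\oplus I(j,i)=I(-2j,i)$ by Lemma \ref{Lm:Interval}(i),(iii), which gives exactly the right-hand side of \eqref{Eq:F1}.

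Formula \eqref{Eq:F2} then follows from \eqref{Eq:F1} with no fresh computation. Writing $a^{3j}y=T_a^{3j}(y)a^{3j}=f^{3j}(y)a^{3j}$ for $y\in X$ and substituting $f^{3j}(y)$ for $y$ in \eqref{Eq:F1}, the first two inner summands become $f^{-3j}(x)+f^{-3j}(f^{3j}(y))=f^{-3j}(x)+y$, while \eqref{Eq:Prop2} (with $i=0$) gives $\beta(x,f^{3j}(y))=f^{-3j}\beta(x,y)$, so the $\beta$-sum becomes $\sum_{k\in I(-2j,i)}f^{-3(k+j)}\beta(x,y)$. A shift of the index by $j$ (Lemma \ref{Lm:Interval}(ii)) converts $I(-2j,i)$ into $I(-j,i+j)=I(i+j,-j)$, which is precisely \eqref{Eq:F2}. (Alternatively, \eqref{Eq:F2} can be derived directly from \eqref{Eq:V1} by the same method used for \eqref{Eq:F1}, now with the mismatched exponents $i-j$ and $i+2j$ on $x$ and $y$.)

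The conceptual core of the argument is painless: because $X$ is normal and $T_a|_X=f$, the ambient nonassociative multiplication collapses on $X$ to the additive structure, and the proof is a translation of \eqref{Eq:V1}--\eqref{Eq:V2} into the language of $f$ and $\beta$. The only genuine point to watch, and the likeliest source of error, is that a power of $f$ acts additively \emph{only} on summands lying in $\rad{\beta}$ (Lemma \ref{Lm:RadImg}(iv)--(v)); hence every distribution step must be preceded by the observation that the relevant $\beta$-sums lie in $\img{\beta}\subseteq\rad{\beta}$. Beyond that, the work is entirely the bookkeeping of index shifts and symmetric differences of the intervals $I(\cdot,\cdot)$.
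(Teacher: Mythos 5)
Your proposal is correct and follows essentially the same route as the paper's proof: reduce via \eqref{Eq:V2} to $f^{-3j}$ applied to $f^{-(i-j)}(f^{i-j}(x)+f^{i-j}(y))$, expand using the Moufang-pair machinery, distribute $f^{-3j}$ after checking the $\beta$-terms lie in $\rad{\beta}$, and merge intervals by symmetric difference; the derivation of \eqref{Eq:F2} from \eqref{Eq:F1} is identical to the paper's. The only cosmetic difference is that you cite Proposition \ref{Pr:iBetaDef} and Corollary \ref{Cr:iPairFormula} (via Proposition \ref{Pr:MPCP}) where the paper uses Proposition \ref{Pr:fiBeta} and Corollary \ref{Cr:iFormula}, which produce the same interval $I(-2j,j)$.
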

\begin{proof}
By Proposition \ref{Pr:Beta}, $f$ is a Moufang permutation on $(X,+)$ with associated biadditive mapping $\beta$. Let $\delta = i-j$ and $b=a^3$. By \eqref{Eq:V2}, we have
\begin{displaymath}
	b^ix\cdot yb^j = b^{i+j}f^{-i-2j}(f^{i-j}x+f^{i-j}y) = b^{i+j}f^{-3j}f^{-\delta}(f^\delta x+f^\delta y).
\end{displaymath}
Since $(f^\delta,\beta_\delta)$ is a Moufang pair by Proposition \ref{Pr:fiBeta}, we obtain
\begin{displaymath}
	b^ix\cdot yb^j = b^{i+j}f^{-3j}f^{-\delta}(f^\delta x+f^\delta y) = b^{i+j}f^{-3j}(x+y+\beta_\delta(x,y)).
\end{displaymath}
By Lemma \ref{Lm:RadImg} and \eqref{Eq:iFormula}, $f^{-3j}(x+y+\beta_\delta(x,y))$ is equal to
\begin{displaymath}
	f^{-3j}(x+y)+f^{-3j}\beta_\delta(x,y) = f^{-3j}x+f^{-3j}y+f^{6j}\beta_{3j}(x,y)+f^{-3j}\beta_\delta(x,y).
\end{displaymath}
By Lemma \ref{Lm:RadImg} and Lemma \ref{Lm:Interval}(ii), we have
\begin{displaymath}
    f^{6j}\beta_{3j} = f^{6j}\lcsum{k\in I(0,3j)}f^{-3k}\beta  = \lcsum{k\in I(0,3j)}f^{-3(k-2j)}\beta  = \lcsum{k\in I(0,3j)-2j}f^{-3k}\beta = \lcsum{k\in I(-2j,j)}f^{-3k}\beta
\end{displaymath}
and
\begin{displaymath}
    f^{-3j}\beta_\delta  =  f^{-3j}\lcsum{k\in I(0,\delta)}f^{-3k}\beta  = \lcsum{k\in I(0,i-j)}f^{-3(k+j)}\beta  = \lcsum{k\in I(0,i-j)+j}f^{-3k}\beta  = \lcsum{k\in I(j,i)}f^{-3k}\beta.
\end{displaymath}
Since $2\img{\beta}=0$ by Lemma \ref{Lm:RadImg}, Lemma \ref{Lm:Interval} implies
\begin{displaymath}
    f^{6j}\beta_{3j} + f^{-3j}\beta_\delta =\lcsum{k\in I(-2j,j)}f^{-3k}\beta + \lcsum{k\in I(j,i)}f^{-3k}\beta\  =\  \lcsum{k\in I(-2j,j)\oplus I(j,i)}f^{-3k}\beta
 	\ =\  \lcsum{k\in I(-2j,i)}f^{-3k}\beta,
\end{displaymath}
finishing the proof of \eqref{Eq:F1}.

For \eqref{Eq:F2}, note that $T_b=f^3$, so $b^ix\cdot b^jy = b^ix\cdot b^jyb^{-j}b^j = b^ix\cdot f^{3j}(y)b^j$. Now \eqref{Eq:F1} yields
\begin{displaymath}
    b^ix\cdot b^jy = b^{i+j}\Big(f^{-3j}(x)+y+\lcsum{k\in I(-2j,i)}f^{-3k}\beta(x,f^{3j}y)\Big).
\end{displaymath}
By \eqref{Eq:Prop2} and Lemma \ref{Lm:Interval}, the sum in the above formula is equal to
\begin{displaymath}
	\sum_{k\in I(-2j,i)}f^{-3(k+j)}\beta(x,y) = \lcsum{k\in I(-2j,i)+j}f^{-3k}\beta(x,y) =
	\lcsum{k\in I(-j,i+j)}f^{-3k}\beta(x,y) =  \lcsum{k\in I(i+j,-j)}f^{-3k}\beta(x,y),
\end{displaymath}
finishing the proof.
\end{proof}

\begin{corollary}\label{Cr:MultMP}
Under the assumptions of Theorem $\ref{Th:MoufangPair}$, if $Q=\langle a^3\rangle X$ then each of the formulas \eqref{Eq:F1}, \eqref{Eq:F2} fully describes the multiplication in $Q$.
\end{corollary}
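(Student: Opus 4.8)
The plan is to reduce the statement to a normal-form observation: under the hypothesis $Q=\langle a^3\rangle X$ every element of $Q$ can be written as $a^{3i}x$ with $i\in\mathbb Z$ and $x\in X$, and equally as $ya^{3j}$ with $j\in\mathbb Z$ and $y\in X$. Granting this, the two formulas of Theorem \ref{Th:MoufangPair} express the product of any two elements of $Q$ purely in terms of the exponents, the elements of $X$, and the data $(f,\beta)$, which is precisely the claim that they describe the entire multiplication.

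First I would establish the normal form. Let $\pi\colon Q\to Q/X$ be the canonical projection, a loop homomorphism with kernel $X$. Since $\pi$ is surjective and $Q=\langle a^3\rangle X$, we get $Q/X=\pi(Q)=\pi(\langle a^3\rangle X)=\langle \pi(a^3)\rangle=\langle a^3X\rangle$, using $\pi(X)=\{X\}$. The quotient $Q/X$ is again Moufang and hence power associative, so the subloop $\langle a^3X\rangle$ generated by the single element $a^3X$ is a cyclic group $\{a^{3i}X:i\in\mathbb Z\}$. Consequently every coset of $X$ equals $a^{3i}X$ for some $i$, so each $q\in Q$ lies in some $a^{3i}X=L_{a^{3i}}(X)$ and thus has the form $q=a^{3i}x$ with $x\in X$. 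By normality $a^{3i}X=Xa^{3i}$, which yields the right-hand form $q=ya^{3i}$ with $y\in X$ as well.

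With the normal form available, the conclusions are immediate. For \eqref{Eq:F2}, given arbitrary $p,q\in Q$ I would write $p=a^{3i}x$ and $q=a^{3j}y$ with $x,y\in X$; then \eqref{Eq:F2} evaluates $pq$ explicitly as $a^{3(i+j)}$ times a prescribed element of $X$. For \eqref{Eq:F1} the only change is to present the second factor in the form $q=ya^{3j}$ and apply the formula to $a^{3i}x\cdot ya^{3j}$. I do not anticipate a real obstacle, since all the substantive work is done in Theorem \ref{Th:MoufangPair}; the sole point needing care is the passage from $Q=\langle a^3\rangle X$ to the coset description $Q/X=\{a^{3i}X:i\in\mathbb Z\}$, which rests only on normality of $X$ and power associativity of $Q/X$, not on any feature peculiar to the present construction.
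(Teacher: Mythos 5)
Your proposal is correct and matches the argument the paper intends: the corollary is stated without proof precisely because, once one knows every element of $Q=\langle a^3\rangle X$ has the normal form $a^{3i}x$ (equivalently $ya^{3j}$ by normality of $X$), the formulas \eqref{Eq:F1} and \eqref{Eq:F2} of Theorem \ref{Th:MoufangPair} cover every product. Your justification of the normal form via the quotient $Q/X=\langle a^3X\rangle$ and power associativity is exactly the right (and only) detail to supply.
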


Recall the loops $C\ltimes_{(g,\gamma)} X$ of Definition \ref{Df:Ext} that are Moufang by Theorem \ref{Th:AbelianByCyclic}. Per our convention, whenever we write $C\ltimes_{(g,\gamma)} X$, we assume that all assumptions of Definition \ref{Df:Ext} are satisfied. Also recall the parameter $r(g,\gamma)$ of \eqref{Eq:r} that appears in Definition \ref{Df:Ext}.

\begin{lemma}\label{Lm:AssumptionsOK}
Let $(X,+)$ be a $3$-divisible abelian group and $C$ a finite cyclic group. Let $f$ be a Moufang permutation on $(X,+)$ with associated biadditive mapping $\beta$ such that $|f^3|$ divides $|C|$. Then $(g,\gamma)=(f^3,\beta)$ is a construction pair that satisfies all assumptions of Definition $\ref{Df:Ext}$.
\end{lemma}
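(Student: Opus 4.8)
The plan is to check the three conditions that Definition \ref{Df:Ext} imposes on $(g,\gamma)=(f^3,\beta)$ and a \emph{finite} cyclic group $C$: that $(f^3,\beta)$ is a construction pair, that $|f^3|$ divides $|C|$, and that $r(f^3,\beta)$ divides $|C|$. The first two are essentially free. By Proposition \ref{Pr:MPCP}, $(f^3,\beta)$ is a construction pair on $(X,+)$, so the first condition holds. The second condition, $|g|=|f^3|$ divides $|C|$, is exactly the hypothesis of the lemma; note that this already forces $f^3$ to have finite order, since $|f^3|$ is a divisor of the finite number $|C|$, so in particular Lemma \ref{Lm:r3} will be applicable.

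The only substantive point is $r(f^3,\beta)\mid |C|$. Set $m=|f^3|=|g|<\infty$. I would obtain this from Lemma \ref{Lm:r3}, which gives $r=r(f^3,\beta)\mid m$ as soon as $\rad{\beta}$ has no element of order $3$. Here the $3$-divisibility hypothesis on $X$ enters: the map $x\mapsto 3x$ is a bijection of $X$, hence injective, so $X$ — and therefore its subgroup $\rad{\beta}$ — has no element of order $3$. Lemma \ref{Lm:r3} then yields $r\mid m$, and combining with $m\mid |C|$ from the first paragraph gives $r\mid |C|$. Having verified all three conditions, $(f^3,\beta)$ and $C$ satisfy every requirement of Definition \ref{Df:Ext}.

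The main (and really the only) obstacle is precisely the divisibility $r\mid m$: the unconditional part of Lemma \ref{Lm:r3} only gives $r\mid 3m$, and knowing $r\mid 3m$ together with $m\mid |C|$ is \emph{not} enough, since $3m$ need not divide $|C|$. This is exactly why the $3$-divisibility of $X$ is needed, to remove $3$-torsion from $\rad{\beta}$ and upgrade $r\mid 3m$ to $r\mid m$. I note that one can in fact reach $r\mid m$ without any torsion hypothesis, which makes the step robust to the precise reading of ``$3$-divisible'': every summand $g^{-k}\beta(x,y)$ lies in $\img{\beta}$ and hence is $2$-torsion by Lemma \ref{Lm:PairRadImg}(ii), so $\mysum{I(0,m)}{x}{y}$ is annihilated by $2$; on the other hand \eqref{Eq:iPairDef} with $i=m$ gives $\mysum{I(0,3m)}{x}{y}=0$, and $g^m=\id{X}$ gives $\mysum{I(0,3m)}{x}{y}=3\mysum{I(0,m)}{x}{y}$, so $\mysum{I(0,m)}{x}{y}$ is also annihilated by $3$. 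Being killed by both $2$ and $3$ it vanishes, whence $r\mid m$ by Lemma \ref{Lm:r2}. Either route closes the argument.
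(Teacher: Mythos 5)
Your argument is exactly the paper's proof: Proposition \ref{Pr:MPCP} gives the construction pair, the hypothesis gives $|g|\mid |C|$, and $3$-divisibility of $X$ rules out $3$-torsion in $\rad{\beta}$ so that Lemma \ref{Lm:r3} upgrades $r\mid 3|g|$ to $r\mid |g|$. Your closing observation is a correct (and nice) bonus: since each summand of $\mysum{I(0,|g|)}{x}{y}$ lies in $\img{\beta}$ and is hence annihilated by $2$, while the sum is annihilated by $3$ by the argument in Lemma \ref{Lm:r3}, the sum vanishes outright, which shows the second conclusion of Lemma \ref{Lm:r3} actually holds without any hypothesis on $\rad{\gamma}$ and makes this step independent of how ``$3$-divisible'' is read.
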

\begin{proof}
By Proposition \ref{Pr:MPCP}, $(g,\gamma)=(f^3,\beta)$ is a construction pair on $(X,+)$. Let $n=|C|$. By our assumption, $|g|$ divides $n$. Let $r=r(g,\gamma)$. Since $X$ is $3$-divisible, it contains no elements of order $3$. Then certainly $\rad{\gamma}$ contains no elements of order $3$. Since $|g|<\infty$, Lemma \ref{Lm:r3} implies that $r$ divides $|g|$. Thus $r$ divides $n$ and all assumptions of Definition \ref{Df:Ext} are satisfied.
\end{proof}

\begin{theorem}\label{Th:HomImg}
Let $Q=CX$ be a $3$-divisible Moufang loop, where $(X,+)$ is a normal abelian subgroup of $Q$ and $C=\langle a^3\rangle$ for some $a\in C$.  Let $f$ be the restriction of $T_a$ to $X$ and let $\beta$ be the associated biadditive mapping. Then:
\begin{enumerate}
\item[(i)] $(g,\gamma)=(f^3,\beta)$ is a construction pair satisfying all assumptions of Definition $\ref{Df:Ext}$ and  the Moufang loop $M = \langle a^3\rangle\ltimes_{(g,\gamma)}X$ is well defined,
\item[(ii)] $\varphi:M\to Q$ defined by $\varphi(a^{3i},x)\mapsto a^{3i}x$ is a surjective homomorphism,
\item[(iii)] the kernel of $\varphi$ intersects both $C\times 0$ and $1\times X$ trivially.
\end{enumerate}
\end{theorem}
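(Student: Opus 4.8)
The plan is to read off all three parts from the machinery already assembled, the one essential ingredient being Theorem~\ref{Th:MoufangPair}: it tells us that the internal product in $Q$ is governed by exactly the formula \eqref{Eq:F2} used to build the abstract loops of Definition~\ref{Df:Ext}. Everything then reduces to formal checks, and I would organize the write-up as (i) well-definedness, (ii) the homomorphism and surjectivity, (iii) the two kernel intersections.

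For part (i), I would first invoke Theorem~\ref{Th:MoufangPair} (equivalently Proposition~\ref{Pr:Beta}) to record that $f=T_a|_X$ is a Moufang permutation on $(X,+)$ with associated biadditive mapping $\beta$, and then Proposition~\ref{Pr:MPCP} to conclude that $(g,\gamma)=(f^3,\beta)$ is a construction pair. To verify the remaining hypotheses of Definition~\ref{Df:Ext} I would rewrite $g=f^3=T_a^3|_X=T_{a^3}|_X=T_b|_X$ with $b=a^3$. If $C$ is infinite, Definition~\ref{Df:Ext} imposes nothing further. If $C$ is finite of order $n$, then $b^n=1$ in $Q$, so $g^n=T_{b^n}|_X=\id X$ and $|g|$ divides $n$; since $Q$ is $3$-divisible it has no element of order $3$, hence neither does $\rad\gamma\subseteq X$, and Lemma~\ref{Lm:r3} yields $r(g,\gamma)\mid|g|\mid n$. (This is precisely the packaging in Lemma~\ref{Lm:AssumptionsOK}.) Thus $M=\langle a^3\rangle\ltimes_{(g,\gamma)}X$ is well defined.

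For part (ii), I would note that because the cyclic group underlying $M$ is the genuine subgroup $C=\langle a^3\rangle\le Q$, the rule $(c,x)\mapsto c\cdot x$ is an unambiguous map $\varphi\colon M\to Q$. It is a homomorphism because the product \eqref{Eq:MultGeneral} in $M$, written with $(g,\gamma)=(f^3,\beta)$ as in \eqref{Eq:MultMP}, is carried by $\varphi$ to the right-hand side of \eqref{Eq:F2}; concretely,
\begin{align*}
\varphi\big((a^{3i},x)\cdot(a^{3j},y)\big)
&= a^{3(i+j)}\Big(f^{-3j}(x)+y+\lcsum{k\in I(i+j,-j)}f^{-3k}\beta(x,y)\Big)\\
&= a^{3i}x\cdot a^{3j}y = \varphi(a^{3i},x)\cdot\varphi(a^{3j},y).
\end{align*}
Surjectivity is then immediate from $Q=CX$, since every element of $Q$ has the form $a^{3i}x=\varphi(a^{3i},x)$.

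Finally, for part (iii), I would use that the identity of $M$ is $(1,0)$ while $0$ is the identity of $Q$. An element $(a^{3i},x)$ lies in $\ker{\varphi}$ exactly when $a^{3i}x=1$ in $Q$. Setting $x=0$ gives $\varphi(a^{3i},0)=a^{3i}$, which equals $1$ iff $a^{3i}=1$ in $C$, i.e.\ iff $(a^{3i},0)=(1,0)$ in $M$, so $\ker{\varphi}\cap(C\times 0)=\{(1,0)\}$; setting the first coordinate trivial gives $\varphi(1,x)=x$, equal to $1$ iff $x=0$, so $\ker{\varphi}\cap(1\times X)=\{(1,0)\}$. The only step I expect to require real care—though it is a modest obstacle—is the well-definedness in part (i): confirming the divisibility constraints $|g|\mid|C|$ and $r(g,\gamma)\mid|C|$, which is exactly where $3$-divisibility of $Q$ is used via the absence of order-$3$ elements and Lemma~\ref{Lm:r3}.
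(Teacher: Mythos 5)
Your proposal is correct and follows essentially the same route as the paper's proof: Theorem \ref{Th:MoufangPair} together with Proposition \ref{Pr:MPCP} and the divisibility checks packaged in Lemma \ref{Lm:AssumptionsOK} for (i), matching \eqref{Eq:MultGeneral} against \eqref{Eq:F2} for (ii), and the direct kernel computation for (iii). The only cosmetic difference is that you entertain the (vacuous) case of infinite $C$, whereas the paper notes at the outset that $3$-divisibility forces $C$ to be finite.
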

\begin{proof}
(i) Since $C$ is $3$-divisible, it is finite, say of order $n$, By Theorem \ref{Th:MoufangPair}, the restriction $f$ of $T_a$ to $X$ is a Moufang permutation on $(X,+)$ and the multiplication in $Q$ is fully determined by \eqref{Eq:F2}. By Proposition \ref{Pr:MPCP}, $(g,\gamma) = (f^3,\beta)$ is a construction pair. On $X$, we have  $f^{3n} = (T_a)^{3n} = (T_{a^{3n}})  = T_1 = \id{X}$, so $|f^3|$ divides $n$. By Lemma \ref{Lm:AssumptionsOK}, all assumptions of Definition \ref{Df:Ext} are satisfied. By Theorem \ref{Th:AbelianByCyclic}, $M=\langle a^3\rangle \ltimes_{(f^3,\beta)} X$ is a well-defined Moufang loop with multiplication \eqref{Eq:MultInner}.

(ii) and (iii): By \eqref{Eq:F2} and \eqref{Eq:MultInner}, we have
\begin{align*}
	&\varphi(a^{3i},x)\varphi(a^{3j},y) = a^{3i}x\cdot a^{3j}y = a^{3(i+j)}(f^{-3j}(x) + y + \lcsum{k\in I(i+j,-j)}f^{-3k}\beta(x,y)) \\
	&= \varphi(a^{3(i+j)},f^{-3j}(x)+y+\lcsum{k\in I(i+j,-j)}f^{-3k}(\beta(x,y))) = \varphi((a^{3i},x)(a^{3j},y)),
\end{align*}
so $\varphi$ is a homomorphism, clearly surjective. The kernel of $\varphi$ consists of all $(a^{3i},x)\in M$ with $0\le i<n$ such that $a^{3i}x=1$. If $x=1$, we get $a^{3i}=1$ and and hence $a^3=1$, $a=1$. If $a^{3i}=1$, we get $x=1$.
\end{proof}

\begin{proposition}\label{Pr:Kernel}
Let $S$ be a nonempty subset of $Q = C\ltimes_{(g,\gamma)}X = \langle b\rangle\ltimes_{(g,\gamma)}X$. Then $S$ is a normal subloop of $Q$ that intersects both $1\times X$ and $C\times 0$ trivially if and only if there are $k\in\mathbb Z$ and $z\in X$ such that $S=\langle (b^k,z)\rangle = \{(b^{ki},iz):i\in\mathbb Z\}$, $|b^k|=|z|$ and $(b^k,z)\in Z(Q)$.
\end{proposition}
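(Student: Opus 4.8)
The plan is to exploit the projection homomorphism $\pi\colon Q\to C$, $(b^i,x)\mapsto b^i$, whose kernel is $1\times X$, and to play the two triviality hypotheses against each other through $\pi$. For the forward implication, assume $S$ is normal with $S\cap(1\times X)=1=S\cap(C\times 0)$. Since $\ker(\pi|_S)=S\cap(1\times X)=1$, the restriction $\pi|_S$ is injective, so $S$ is isomorphic to its image $\pi(S)$, a subloop of the cyclic group $C$. Any subloop of a group is a subgroup (for $h$ in the subloop the restricted translation $L_h$ is onto, so some element is sent to $1$, exhibiting $h^{-1}$ in the subloop), hence $\pi(S)=\langle b^k\rangle$ for some $k$ and $S\cong\langle b^k\rangle$ is a cyclic group. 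Writing $s=(b^k,z)$ for the unique element of $S$ with $\pi(s)=b^k$, we obtain $S=\langle s\rangle$.

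The crux, which I expect to be the main obstacle, is to upgrade normality of $S$ to the much stronger conclusion $s\in Z(Q)$. Here I use that $\inn{Q}$ is generated by the maps $T_u$, $L_{u,v}$, $R_{u,v}$, and that the surjection $\pi$ carries $\inn{Q}$ into $\inn{C}$, which is trivial because $C$ is an abelian group; concretely $\pi(\varphi(s))=\pi(s)$ for every $\varphi\in\inn{Q}$. Normality gives $\varphi(s)\in S$, and since $\pi|_S$ is injective this forces $\varphi(s)=s$ for each generator $\varphi$. Reading off the fixed-point equations, $T_u(s)=s$ yields $us=su$, the relation $R_{u,v}(s)=s$ yields the identity $(su)v=s(uv)$, and $L_{u,v}(s)=s$ yields $u(vs)=(uv)s$; combining these left and right nuclear identities with commutativity also produces the middle identity $y(sz)=(ys)z$, so that $s\in\nuc{Q}$ and therefore $s\in Z(Q)$.

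Once $s\in Z(Q)$, Proposition \ref{Pr:NZ} supplies $z\in\rad{\gamma}$ and $g(z)=z$. A short induction on the multiplication formula \eqref{Eq:MultGeneral} — in which the $\gamma$-sum vanishes because $z\in\rad{\gamma}$ and $g^{-k}(iz)=iz$ because $g$ fixes $z$ — then gives $s^i=(b^{ki},iz)$, so $S=\{(b^{ki},iz):i\in\mathbb Z\}$. It remains to translate the triviality hypotheses into the order condition. The intersection $S\cap(1\times X)=1$ is exactly the implication $b^{ki}=1\Rightarrow iz=0$, i.e.\ $|z|$ divides $|b^k|$, while $S\cap(C\times0)=1$ is the implication $iz=0\Rightarrow b^{ki}=1$, i.e.\ $|b^k|$ divides $|z|$ (reading these divisibilities as inclusions of subgroups of $\mathbb Z$ so as to cover infinite orders); together they give $|b^k|=|z|$.

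For the converse the work is light. If $s=(b^k,z)\in Z(Q)$ then $s$ is fixed by every inner mapping, since a central element commutes and associates with all elements; hence $S=\langle s\rangle\subseteq Z(Q)$ is invariant under $\inn{Q}$ and so is a normal subloop. The same power computation gives $S=\{(b^{ki},iz):i\in\mathbb Z\}$, and reversing the order-condition analysis shows that the hypothesis $|b^k|=|z|$ makes both $S\cap(1\times X)$ and $S\cap(C\times 0)$ trivial.
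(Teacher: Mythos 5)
Your proposal is correct and follows essentially the same route as the paper: both use triviality of $S\cap(1\times X)$ to see that $S$ projects injectively onto a cyclic subgroup of $C$, hence $S=\langle(b^k,z)\rangle$; both observe that inner mappings preserve the first coordinate, so normality plus injectivity of the projection forces $(b^k,z)$ to be fixed by $\inn{Q}$, i.e.\ to be central; and both then read off $|b^k|=|z|$ from $s^i=(b^{ki},iz)$. The only cosmetic difference is that you phrase the coordinate preservation via the homomorphism $\pi:Q\to C$ and the triviality of $\inn{C}$, where the paper computes $T_{(b^i,x)}(b^k,z)=(b^k,u)$ directly from the multiplication formula.
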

\begin{proof}
If $(b^k,z)\in Z(Q)$ then $g(z)=z\in\rad{\gamma}$ by Proposition \ref{Pr:NZ} and thus $(b^k,z)^i = (b^{ki},iz)$ for all $i\in\mathbb Z$. The converse implication is now clear since every central subloop is normal and the condition $|b^k|=|z|$ guarantees that $S$ intersects both $1\times X$ and $C\times 0$ trivially.

Let us prove the direct implication. Suppose that $S\unlhd Q$ intersects both $1\times X$ and $C\times 0$ trivially. We claim that for every $c\in C$ there is at most one $x\in X$ such that $(c,x)\in S$. Suppose that $(c,x),(c,y)\in S$ for some $x,y\in X$. Then $(c,x)(c,y)^{-1}\in S$ and $(c,x)(c,y)^{-1}=(1,u)$ for some $u\in X$. Since $S$ intersects $1\times X$ trivially, we have $u=0$ and $x=y$.

Hence there are $k\in\mathbb Z$ and $z\in X$ such that $S=\langle (b^k,z)\rangle$. We have $(b^k,z)\in S$ and also $T_{(b^i,x)}(b^k,z)\in S$ since $S\unlhd Q$. But $T_{(b^i,x)}(b^k,z) = (b^i,x)(b^k,z)\cdot (b^i,z)^{-1} = (b^k,u)$ for some $u\in X$ and it follows that $T_{(b^i,x)}(b^k,z)=(b^k,z)$. Similarly, $L_{(b^i,x),(b^j,y)}(b^k,z)=(b^k,z)=R_{(b^i,x),(b^j,y)}(b^k,z)$. Hence $(b^k,z)\in Z(Q)$.

We also claim that for every $x\in X$ there is at most one $c\in C$ such that $(c,x)\in S$. Suppose that $(c,x),(d,x)\in S$ for some $c,d\in C$. We have $(c,x)=(c,0)(1,x)$ and $(d,x)=(d,0)(1,x)$. Hence $(cd^{-1},0) = (c,0)(d,0)^{-1}\in S$. Since $S$ intersects $C\times 0$ trivially, we have $c=d$. We conclude that $|b^k|=|z|$.
\end{proof}

\begin{theorem}[Split $3$-divisible abelian-by-cyclic Moufang loops]\label{Th:Split}
Suppose that $(X,+)$ is a $3$-divisible abelian group and $C$ a $3$-divisible cyclic group. The following conditions are equivalent:
\begin{enumerate}
\item[(i)] $Q$ is a Moufang loop, $X\unlhd Q$, $Q=CX$ and $C\cap X=1$.
\item[(ii)] $Q$ is isomorphic to $C\ltimes_{(f^3,\beta)}X$ for some Moufang permutation $f$ on $(X,+)$ with associated biadditive mapping $\beta$.
\end{enumerate}
\end{theorem}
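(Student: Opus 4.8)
The plan is to prove the two implications separately, putting essentially all the work into $(i)\Rightarrow(ii)$ and, crucially, establishing that direction by exhibiting an explicit isomorphism rather than by first proving that $Q$ is $3$-divisible and quoting Theorem \ref{Th:HomImg}. The implication $(ii)\Rightarrow(i)$ is little more than a reading of the remarks following Definition \ref{Df:Ext}: if $Q\cong C\ltimes_{(f^3,\beta)}X$ then $Q$ is Moufang by Theorem \ref{Th:AbelianByCyclic}, the subloop $1\times X$ is normal with $Q/(1\times X)\cong C$, the subgroup $C\times 0$ satisfies $(C\times 0)\cap(1\times X)=1$, and the identity $(b^i,x)=(b^i,0)\cdot(1,x)$ shows $Q=(C\times 0)(1\times X)$. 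Identifying $C$ with $C\times 0$ and $X$ with $1\times X$ then yields $(i)$, and both factors are $3$-divisible by hypothesis.

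For $(i)\Rightarrow(ii)$ I would first pin down the cyclic data. Since $C$ is $3$-divisible and cyclic it must be finite of order coprime to $3$ (the infinite cyclic group is not $3$-divisible), so cubing is a bijection of $C$; choosing any generator $a$ of $C$ gives $C=\langle a^3\rangle$ together with $a^{|C|}=1$. Set $f=T_a|_X$, which is a Moufang permutation on $(X,+)$ with associated biadditive mapping $\beta$ by Proposition \ref{Pr:Beta}. Because $T_a^{\,|C|}=T_{a^{|C|}}=T_1$ restricts to $\id{X}$, the order $|f^3|$ divides $|C|$. By Proposition \ref{Pr:MPCP} the tuple $(f^3,\beta)$ is a construction pair, and since $X$ is $3$-divisible, Lemma \ref{Lm:AssumptionsOK} certifies that every requirement of Definition \ref{Df:Ext} holds; hence $M=C\ltimes_{(f^3,\beta)}X$ is a well-defined Moufang loop.

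It then remains to show $Q\cong M$ through $\varphi\colon M\to Q$, $(a^{3i},x)\mapsto a^{3i}x$. That $\varphi$ is a homomorphism is the computation already carried out in Theorem \ref{Th:HomImg}(ii): because $Q=\langle a^3\rangle X$, Corollary \ref{Cr:MultMP} tells us the multiplication of $Q$ is given by \eqref{Eq:F2}, which is term-for-term the multiplication \eqref{Eq:MultMP} of $M$ once one reads $(g,\gamma)=(f^3,\beta)$ and $b=a^3$. For bijectivity I would argue that $X\unlhd Q$ makes $Q/X$ a loop and that the restriction of the quotient map to $C$ is an isomorphism $C\to Q/X$ (its kernel is $C\cap X=1$, and it is onto since $Q=CX$); consequently every element of $Q$ has a unique expression $a^{3i}x$ with $a^{3i}\in C$ and $x\in X$, so $\varphi$ is a bijection. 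A bijective loop homomorphism is an isomorphism, which finishes $(i)\Rightarrow(ii)$.

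The step I expect to require the most care is the well-definedness of $M$: the loop $C\ltimes_{(f^3,\beta)}X$ exists only when $r(f^3,\beta)$ divides $|C|$, and this is precisely where the $3$-divisibility of $X$ is indispensable, entering through the absence of order-$3$ elements in $\rad{\beta}$ via Lemmas \ref{Lm:r3} and \ref{Lm:AssumptionsOK}. A secondary subtlety worth flagging is the unique factorization $q=a^{3i}x$, which delivers injectivity of $\varphi$ with no cardinality count and therefore covers infinite $X$ uniformly; this is the reason the direct construction is preferable to routing through Theorem \ref{Th:HomImg}, whose hypotheses would otherwise force a separate (and genuinely nontrivial) verification that $Q$ itself is $3$-divisible. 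I anticipate no trouble from the homomorphism property, since the two multiplication formulas become literally identical after the substitution $(g,\gamma)=(f^3,\beta)$.
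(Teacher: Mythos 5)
Your proof is correct and follows the same overall strategy as the paper's: restrict $T_a$ to $X$ to get a Moufang permutation $f$ via Proposition \ref{Pr:Beta}, form $M=C\ltimes_{(f^3,\beta)}X$, and show that $\varphi:(a^{3i},x)\mapsto a^{3i}x$ is an isomorphism. The difference is in the packaging of the forward direction. The paper simply cites Theorem \ref{Th:HomImg} to get a well-defined surjective homomorphism $\varphi$ and then checks that its kernel is trivial from $a^{3i}x=1\Rightarrow x\in C\cap X=1$; you instead inline the content of Theorem \ref{Th:HomImg} (well-definedness of $M$ via Proposition \ref{Pr:MPCP} and Lemma \ref{Lm:AssumptionsOK}, the homomorphism property via Corollary \ref{Cr:MultMP} and the literal agreement of \eqref{Eq:F2} with \eqref{Eq:MultGeneral}) and deduce injectivity from the uniqueness of the factorization $q=a^{3i}x$. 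The two injectivity arguments are interchangeable, since for loop homomorphisms a trivial kernel already forces injectivity. What your route genuinely buys is hypothesis hygiene: Theorem \ref{Th:HomImg} as stated assumes that $Q$ itself is $3$-divisible, whereas Theorem \ref{Th:Split} only assumes that $C$ and $X$ are, so the paper's citation strictly speaking needs either the observation that the proof of Theorem \ref{Th:HomImg} uses only the $3$-divisibility of $C$ and $X$, or a separate (not entirely trivial) argument that $CX$ is $3$-divisible; your version sidesteps this issue. Your preliminary reductions --- a $3$-divisible cyclic group is finite of order coprime to $3$, hence $C=\langle a^3\rangle$ for any generator $a$, and $T_a^{|C|}=T_{a^{|C|}}$ restricts to $\id{X}$ so that $|f^3|$ divides $|C|$ --- are all correct and coincide with what the paper does inside the proof of Theorem \ref{Th:HomImg}.
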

\begin{proof}
The converse implication (ii) $\Rightarrow$ (i) is clear from Theorem \ref{Th:AbelianByCyclic}. For the direct implication, Theorem \ref{Th:HomImg} shows that $Q$ is the image of the homomorphism $\varphi:M=\langle a^3\rangle\ltimes_{(f^3,\beta)}X\to Q$, $\varphi(a^{3i},x)=a^{3i}x$, where $f$ is the restriction of $T_a$ to $X$. Suppose that $(a^{3i},x)$ is in the kernel of $\varphi$. Then $a^{3i}x=1$ implies that $x\in C\cap X=1$ and $a^{3i}=1$. Hence $\varphi$ is injective.
\end{proof}

\end{document}